\documentclass[reqno, british]{amsart}

\textwidth 160mm \textheight 225mm \topmargin 0mm \evensidemargin
0mm \oddsidemargin 0mm

\pagestyle{headings}

\usepackage{geometry}
\geometry{verbose,tmargin=2.54cm,bmargin=2.54cm,lmargin=3.18cm,rmargin=3.18cm}
\usepackage{mathrsfs}
\usepackage{enumitem}
\usepackage{amsthm}
\usepackage{amsmath}
\usepackage{amssymb}
\usepackage{hyperref}
\usepackage{graphicx, xypic}
\usepackage{float}

\makeatletter

\newtheorem*{thm*}{Theorem}
\theoremstyle{plain}
\newtheorem{thm}{\protect\theoremname}[section]

\theoremstyle{plain}
\newtheorem{prop}[thm]{\protect\propositionname}
\theoremstyle{definition}
\newtheorem{defn}[thm]{\protect\definitionname}
\theoremstyle{plain}
\newtheorem{lem}[thm]{\protect\lemmaname}
\theoremstyle{definition}
\newtheorem{example}[thm]{\protect\examplename}
\theoremstyle{remark}
\newtheorem{rem}[thm]{\protect\remarkname}
\ifx\proof\undefined
\newenvironment{proof}[1][\protect\proofname]{\par
\normalfont\topsep6\p@\@plus6\p@\relax
\trivlist
\itemindent\parindent
\item[\hskip\labelsep
\scshape
#1]\ignorespaces
}{%
\endtrivlist\@endpefalse
}
\providecommand{\proofname}{Proof}
\fi
\theoremstyle{plain}
\newtheorem{cor}[thm]{\protect\corollaryname}
\theoremstyle{plain}
\newtheorem{fact}[thm]{\protect\factname}
 \newlist{casenv}{enumerate}{4}
 \setlist[casenv]{leftmargin=*,align=left,widest={iiii}}
 \setlist[casenv,1]{label={{\itshape\ \casename} \arabic*.},ref=\arabic*}
 \setlist[casenv,2]{label={{\itshape\ \casename} \roman*.},ref=\roman*}
 \setlist[casenv,3]{label={{\itshape\ \casename\ \alph*.}},ref=\alph*}
 \setlist[casenv,4]{label={{\itshape\ \casename} \arabic*.},ref=\arabic*}

\setcounter{section}{-1}

\makeatother

\usepackage{babel}
\providecommand{\definitionname}{Definition}
\providecommand{\examplename}{Example}
\providecommand{\lemmaname}{Lemma}
\providecommand{\remarkname}{Remark}
\providecommand{\theoremname}{Theorem}
\providecommand{\casename}{Case}
\providecommand{\corollaryname}{Corollary}
\providecommand{\factname}{Fact}
\providecommand{\propositionname}{Proposition}

\newcommand\Cref[1]{{Corollary~\ref{#1}}}
\newcommand\Lref[1]{{Lemma~\ref{#1}}}
\newcommand\Tref[1]{{Theorem~\ref{#1}}}

\newcommand\sSref[1]{{subsection~\ref{#1}}}

\newcommand\Dref[1]{{Definition~\ref{#1}}}

\global\long\def\R{\mathcal{R}}
\global\long\def\tr{\textnormal{tr}}

\global\long\def\layer#1#2{\overset{\left[#2\right]}{}#1}
\global\long\def\zero{\layer{0}{0}}

\global\long\def\one{\layer{0}{1}}
\global\long\def\minus{\layer{0}{-1}}
\global\long\def\zeroset#1{\overset{\left[0\right]}{}#1}
\global\long\def\ELTrop{\textnormal{ELTrop}}
\global\long\def\sgn{\textnormal{sgn}}
\global\long\def\adj{\textnormal{adj}}
\global\long\def\ELT#1#2{\mathscr{R}\left(#2,#1\right)}

\global\long\def\L{\mathscr{L}}
\global\long\def\F{\mathscr{F}}

\global\long\def\t{\tau}
\global\long\def\etr{\textnormal{etr}}

\global\long\def\Id{\textnormal{Id}}
\global\long\def\ghs{\underset{\text{gs}}{\vDash}}
\global\long\def\symmdash{\succeq_\circ}
\global\long\def\N{\mathbb{N}}
\global\long\def\quo#1#2{\raisebox{.2em}{\ensuremath{#1}}\left/\raisebox{-.2em}{\ensuremath{#2}}\right.}
\global\long\def\gr{\mathrm{gr}}

\newcommand{\RR}{\mathbb{R}}      

\begin{document}
\title{ELT Linear Algebra II}
\date{\today}

\author[Guy Blachar]{Guy Blachar}
\address{Department of Mathematics, Bar-Ilan University, Ramat-Gan 52900,
Israel.} \email{\href{mailto:blachag@biu.ac.il}{blachag@biu.ac.il}}

\author[Erez Sheiner]{Erez Sheiner}
\address{Department of Mathematics, Bar-Ilan University, Ramat-Gan 52900,
Israel.} \email{\href{mailto:erez@math.biu.ac.il}{erez@math.biu.ac.il}}

\thanks{This article contains work from Erez Sheiner's Ph.D.\ Thesis, which was accepted on 1.1.16, and from Guy Blachar's M.Sc.\ Thesis, both submitted to the Math Department at Bar-Ilan University. Both works were carried under the supervision of Prof.\ Louis Rowen from Bar-Ilan University, to whom we thank deeply for his help and guidance.}

\subjclass[2010]{Primary: 15A03, 15A09,  15A15, 15A63; Secondary:
16Y60, 14T05. }

\keywords{Tropical algebra, ELT algebra, matrix theory, characteristic polynomial, trace, transfer principle}

\begin{abstract}
This paper is a continuation of \cite{BS}. Exploded layered tropical (ELT) algebra is an extension of tropical algebra with a structure of layers. These layers allow us to use classical algebraic results in order to easily prove analogous tropical results. Specifically we prove and use an ELT version of the transfer principal presented in \cite{Akian2008}.\\
In this paper we use the transfer principal to prove an ELT version of Cayley-Hamilton Theorem, and study the multiplicity of the ELT determinant, ELT adjoint matrices and quasi-invertible matrices.\\
We also define a new notion of trace -- the essential trace -- and study its properties.
\end{abstract}

\maketitle

\tableofcontents
\addtocontents{toc}{~\hfill\textbf{Page}\par}

\section{Introduction}

Tropical linear algebra, also known as Max-Plus linear algebra, has been studied for more than 50 years (ref.~\cite{B}). While tropical geometry mainly deals with geometric combinatorial problems, tropical linear algebra deals with algebraic non-linear combinatorial problems (for instance, the assignment problem \cite{K}). Tropical linear algebra may also be used as a mean to study the tropical algebraic geometry (for instance, the tropical resultant). Notable work in this field can be found at \cite{B}, \cite{DSS}, \cite{IR4}, \cite{IR3} and \cite{S}.\\

In our previous paper (\cite{BS}) we introduced a new structure, which we call exploded layered tropical algebra (or ELT algebra for short). This structure is a generalization of the work of Izhakian and Rowen (\cite{IR1}), and is similar to Parker's exploded structure (\cite{PR}). The layers enable us to use ``classical language'' even when dealing with tropical questions.\\

Our work in this paper can be divided into two main parts. The first one uses the theory of semirings with a negation map to study the ELT structure. We formulate and prove an ELT version of the transfer principles written in \cite{Akian2008}, and use them to study ELT matrix theory, such as the ELT adjoint matrix (\Tref{thm:Trans-Princ} and \Tref{thm:Trans-Princ-Surpass}).\\

The second part of our work deals with a new notion of trace. Whereas the trace can be defined as in the classical theory, it lacks some important properties in the ELT theory. For example, the trace of an ELT nilpotent matrix need not be of layer zero. We define the essential trace of an ELT matrix (\sSref{sec:etr}) to deal with such cases.\\

\subsection{ELT Algebras}

\begin{defn}
Let $\L$ be a semiring, and $\F$ a totally ordered semigroup. An \textbf{ELT algebra} is the pair $\R=\ELT{\F}{\L}$, whose elements are denoted $\layer a{\ell}$ for $a\in\F$ and $\ell\in\L$, together with the semiring (without zero) structure:
\begin{enumerate}
\item $\layer{a_{1}}{\ell_{1}}+\layer{a_{2}}{\ell_{2}}:=\begin{cases}
\layer{a_{1}}{\ell_{1}} & a_{1}>a_{2}\\
\layer{a_{2}}{\ell_{2}} & a_{1}<a_{2}\\
\layer{a_{1}}{\ell_{1}+_\L\ell_{2}} & a_{1}=a_{2}
\end{cases}$.
\item $\layer{a_{1}}{\ell_{1}}\cdot\layer{a_{2}}{\ell_{2}}:=\layer{\left(a_{1}+_\F a_{2}\right)}{\ell_{1}\cdot_\L\ell_{2}}$.
\end{enumerate}
We write . For $\layer{a}{\ell}$, $\ell$ is called the \textbf{layer}, whereas $a$ is called the \textbf{tangible value}.
\end{defn}

ELT algebras originate from \cite{PR}, and are also discussed in \cite{Sheiner2015}.\\

Let $\R$ be an ELT algebra. We write $s:\R\rightarrow\L$ for the projection on the first component (the \textbf{sorting map}):
$$s\left(\layer a{\ell}\right)=\ell$$
We also write $\t:\R\rightarrow\F$ for the projection on the second component:
$$\t\left(\layer a{\ell}\right)=a$$
We denote the \textbf{zero-layer subset}
$$\zeroset{\R}=\left\{\alpha\in \R\middle| s\left(\alpha\right)=0\right\}$$
and
$$\R^{\times}=\left\{\alpha\in\R\middle|s\left(\alpha\right)\neq 0\right\}=\R\setminus\zeroset{\R}$$\\

We note some special cases of ELT algebras.
\begin{example}
Let $\left(G,\cdot\right)$ be a totally ordered group. We denote by $G_{\max}$ the max-plus algebra defined over $G$, i.e.\ the set $G$ endowed with the operation
$$a\oplus b=\max\left\{a,b\right\},\;\;a\odot b=a\cdot b.$$
Then $G_{\max}$ is equivalent to the trivial ELT algebra with $\F=G$ and $\L=\left\{1\right\}$.
\end{example}

\begin{example}
Zur Izhakian's supertropical algebra (\cite{IZ}) is equivalent to an ELT algebra with a layering set $\L=\left\{1,\infty\right\}$, where
$$1+1=\infty,\;\;1+\infty=\infty+1=\infty,\;\;\infty+\infty=\infty$$
and
$$1\cdot 1=1,\;\;1\cdot\infty=\infty\cdot 1=\infty,\;\;\infty\cdot\infty=\infty.$$

The supertropical "ghost" elements $a^\nu$ correspond to $\layer{a}{\infty}$ in the ELT notation, whereas the tangible elements $a$ correspond to $\layer{a}{1}$.
\end{example}

We define a partial order relation $\vDash$ on $\R$ in the following way:
$$x\vDash y\Longleftrightarrow \exists z\in\zeroset{\R}:x=y+z$$

\begin{lem}[{\cite[Lemma 0.4]{BS}}]
$\vDash$ is a partial order relation on $\R$.
\end{lem}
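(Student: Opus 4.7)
The plan is to verify the three axioms of a partial order (reflexivity, antisymmetry, transitivity) directly from the piecewise definition of addition. In each case the work reduces to a comparison of the tangible values of the ``witnesses'' $z\in\zeroset{\R}$. A single preliminary observation makes two of the three axioms routine: $\zeroset{\R}$ is closed under addition, since in each of the three clauses of the definition of $+$ the resulting layer is a layer from $\{s(z_{1}),s(z_{2}),s(z_{1})+_{\L}s(z_{2})\}$, all of which are $0$ whenever $s(z_{1})=s(z_{2})=0$.

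For reflexivity, for any $x\in\R$ the element $z=\layer{\t(x)}{0}$ lies in $\zeroset{\R}$, and the ``equal tangibles'' clause of the sum gives
\[
x+z=\layer{\t(x)}{s(x)+_{\L}0}=\layer{\t(x)}{s(x)}=x,
\]
so $x\vDash x$. For transitivity, if $x=y+z_{1}$ and $y=w+z_{2}$ with $z_{1},z_{2}\in\zeroset{\R}$, then by the preliminary observation $z_{1}+z_{2}\in\zeroset{\R}$, and associativity of $+$ gives $x=w+(z_{1}+z_{2})$, hence $x\vDash w$.

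The main step is antisymmetry. Suppose $x=y+z_{1}$ and $y=x+z_{2}$ with $z_{i}\in\zeroset{\R}$. Applying the defining cases to $x=y+z_{1}$: if $\t(y)>\t(z_{1})$ or $\t(y)=\t(z_{1})$, then $x=y$ outright (in the equal case because $s(y)+_{\L}0=s(y)$); if $\t(y)<\t(z_{1})$, then $x=z_{1}$, so $s(x)=0$ and $\t(x)>\t(y)$. Symmetrically, from $y=x+z_{2}$ either $y=x$, or $s(y)=0$ and $\t(y)>\t(x)$. If neither dichotomy lands on equality, we obtain simultaneously $\t(x)>\t(y)$ and $\t(y)>\t(x)$, a contradiction in the totally ordered semigroup $\F$. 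Hence $x=y$.

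The only point requiring care is antisymmetry, and even there the only obstacle is making sure to handle all three tangible-comparison cases; everything else is a direct unwinding of the definitions, so the proof is essentially a short case analysis.
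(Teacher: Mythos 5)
Your proof is correct. The preliminary observation that $\zeroset{\R}$ is closed under addition is right (each of the three clauses of $+$ preserves layer $0$), reflexivity via the witness $z=\layer{\t(x)}{0}$ is fine since $s(x)+_{\L}0_{\L}=s(x)$, transitivity follows immediately from closure under addition and associativity, and your antisymmetry dichotomy (either $x=y$ or $s(x)=0$ with $\t(x)>\t(y)$, and symmetrically) is exactly the case analysis needed to reach a contradiction in the totally ordered $\F$. The paper itself does not reproduce a proof here — it simply cites Lemma~0.4 of \cite{BS} — but what you have written is the standard direct verification one would expect, and there are no gaps.
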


Let us point out some important elements in any ELT algebra $\R$:
\begin{enumerate}
\item $\layer{0}{1}$, which is the multiplicative identity of $\R$.
\item $\layer{0}{0}$, which is idempotent for both operations of $\R$.
\item $\layer{0}{-1}$, which has the role of ``$-1$'' in our theory.
\end{enumerate}

Note that $\zero\cdot\layer{a}{\ell}=\layer{a}{0}$. Therefore, $\zeroset{\R}=\zero\,\R$. In particular, $\zeroset{\R}$ is an ideal of $\R$.\\

Throughout this paper, unless otherwise noted, we work under more general assumptions than in~\cite{BS}. Out underlying ELT algebras $\R=\ELT{\F}{\L}$ will be \textbf{commutative ELT rings}, meaning that $\F$ is an abelian group, and $\L$ is a commutative ring.

\subsection{The Element $-\infty$}\label{sub:the-element-minf}

As in the tropical algebra, ELT algebras lack an additive identity. Therefore, we adjoin a formal element to the ELT algebra $\R$, denoted by $-\infty$, which satisfies $\forall\alpha\in\R$:
$$\begin{array}{c}
-\infty+\alpha=\alpha+-\infty=\alpha\\
-\infty\cdot\alpha=\alpha\cdot-\infty=-\infty
\end{array}$$
We also define $s\left(-\infty\right)=0$. We denote $\overline{\R}=\R\cup\left\{-\infty\right\}$.\\

We note that $\overline{\R}$ is now a semiring, with the following property:
$$\alpha+\beta=-\infty\Longrightarrow\alpha=\beta=-\infty$$
Such a semiring is called an \textbf{antiring}. Antirings are dealt with in \cite{Tan2007} and \cite{Dolzan2008}.

\subsection{Non-Archimedean Valuations and Puiseux Series}

We recall the definition of a (non-Archimedean) valuation (see \cite{Efrat2006} and \cite{Tignol2015}).

\begin{defn}
Let $K$ be a field, and let $\left(\Gamma,+,\ge\right)$ be an abelian totally ordered group. Extend $\Gamma$ to $\Gamma\cup\left\{\infty\right\}$ with $\gamma<\infty$ and $\gamma+\infty=\infty+\gamma=\infty$ for all $\gamma\in\Gamma$. A function $v:K\to\Gamma\cup\left\{\infty\right\}$ is called a \textbf{valuation}, if the following properties hold:
\begin{enumerate}
  \item $v\left(x\right)=\infty\Longleftrightarrow x=0$.
  \item $\forall x,y\in K: v\left(xy\right)=v\left(x\right)+v\left(y\right)$.
  \item $\forall x,y\in K: v\left(x+y\right)\ge\min\left\{v\left(x\right), v\left(y\right)\right\}$.
\end{enumerate}
\end{defn}

Given a valuation $v$ over a field $K$, we recall some basic properties:
\begin{enumerate}
  \item $v\left(1\right)=0$.
  \item $\forall x\in K:v\left(-x\right)=v\left(x\right)$.
  \item $\forall x\in K^{\times}:v\left(x^{-1}\right)=-v\left(x\right)$.
  \item If $v\left(x+y\right)>\min\left\{v\left(x\right), v\left(y\right)\right\}$, then $v\left(x\right)=v\left(y\right)$. (For this reason, the equality between the valuation of two elements is central in out theory.)
\end{enumerate}

One may associate with $v$ the \textbf{valuation ring}
$$\mathcal{O}_v=\left\{x\in K\middle|v\left(x\right)\ge 0\right\}$$
This is a local ring with the unique maximal ideal
$$\mathfrak{m}_v=\left\{x\in K\middle|v\left(x\right)>0\right\}$$
The quotient $k_v=\quo{\mathcal{O}_v}{\mathfrak{m}_v}$ is called the \textbf{residue field} of the valuation.\\

Let us present another key construction related to valuations. For $\gamma\in\Gamma$, let $D_{\ge\gamma}=\left\{x\in K\middle|v\left(x\right)\ge\gamma\right\}$ and $D_{>\gamma}=\left\{x\in K\middle|v\left(x\right)>\gamma\right\}$. It is easily seen that $D_{\ge\gamma}$ is an abelian additive group, and that $D_{>\gamma}$ is a subgroup of $D_{\ge\gamma}$. Note that for $\gamma=0$, $D_{\ge 0}=\mathcal{O}_v$ and $D_{>0}=\mathfrak{m}_v$. Set $D_\gamma=\quo{D_{\ge\gamma}}{D_{>\gamma}}$. The \textbf{associated graded ring} of $K$ with respect to $v$ is
$$\gr_v\left(K\right)=\bigoplus_{\gamma\in\Gamma}D_\gamma$$
Given $\gamma_1,\gamma_2\in\Gamma$, the multiplication in $K$ induces a well-defined multiplication $D_{\gamma_1}\times D_{\gamma_2}\to D_{\gamma_1+\gamma_2}$ given by
$$\left(x_1+D_{>\gamma_1}\right)\cdot\left(x_2+D_{>\gamma_2}\right)=x_1x_2+D_{>\left(\gamma_1+\gamma_2\right)}$$
This multiplication can be extended to a multiplication map in $\gr_v\left(K\right)$, endowing it with a structure of a graded ring.\\

We will now focus on Puiseux series, which is the central example for our theory. The field of \textbf{Puiseux series} with coefficients in a field $K$ and exponents in an abelian ordered group $\Gamma$ is
$$K\{\{t\}\}=\left\{\sum_{i\in I}\alpha_i t^i\middle|\alpha_i\in K, I\subseteq \Gamma\text{ is well-ordered}\right\}$$
The resulting set, equipped with the natural operations, is a field; in addition, if $K$ is algebraically closed and $\Gamma$ is divisible, then $K\{\{t\}\}$ is also algebraically closed.\\

Assuming $\Gamma$ is also totally ordered, one may define a valuation on the field of Puiseux series $v:K\{\{t\}\}\to\Gamma\cup\left\{\infty\right\}$ as follows: $v\left(0\right)=\infty$, and
$$v\left(\sum_{i\in I}\alpha_i t^i\right)=\min\left\{i\in I\middle|\alpha_i\neq 0\right\}$$

Let us examine the associated graded ring with respect to this valuation. For each $\gamma\in\Gamma$, we first claim that $D_\gamma\cong K$. Indeed, the kernel of the homomorphism $f:D_{\ge\gamma}\to K$ defined by
$$f\left(\sum_{\gamma\leq i\in I}\alpha_i t^i\right)=\alpha_\gamma$$
is precisely $D_{>\gamma}$ (since $D_{>\gamma}$ is the subgroup of $D_{\ge\gamma}$ of Puiseux series whose minimal degree is bigger than $\gamma$).

\subsection{ELT Algebras and Puiseux Series}

Let $\R=\ELT{\F}{\L}$ be an ELT algebra. In \cite{BS} we introduced the \textbf{EL tropicalization} function $\ELTrop:\L\{\{t\}\}\rightarrow\overline{\R}$, which is defined in the following way: if $x\in \L\{\{t\}\}\backslash\left\{0\right\}$ has a leading monomial $\ell t^{a}$, then
$$\ELTrop\left(x\right)=\layer{\left(-a\right)}{\ell}.$$
In addition, $\ELTrop\left(0\right)=-\infty$.

\begin{lem}[{\cite[Lemma 0.9]{BS}}]\label{lem:ELTrop-prop}
The following properties hold:
\begin{enumerate}
\item $\forall x,y\in\L\{\{t\}\}:\ELTrop\left(x\right)+\ELTrop\left(y\right)\vDash\ELTrop\left(x+y\right)$.
\item $\forall\alpha\in\L\,\forall x\in\L\{\{t\}\}:\ELTrop\left(\alpha x\right)=\layer 0{\alpha}\, \ELTrop\left(x\right)$.
\item $\forall x,y\in\L\{\{t\}\}:\ELTrop\left(x\right)\ELTrop\left(y\right)\vDash\ELTrop\left(xy\right)$.
\end{enumerate}
\end{lem}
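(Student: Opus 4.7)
The plan is to verify each part by a direct case analysis on the leading monomials of the Puiseux series involved, applying the ELT addition and multiplication rules to the leading data. Part~(2) is essentially a one-line computation: if $x$ has leading monomial $\ell t^a$, then $\alpha x$ has leading candidate $\alpha\ell t^a$, so when $\alpha\ell\ne 0$ the identity $\ELTrop(\alpha x)=\layer{-a}{\alpha\ell}=\layer{0}{\alpha}\cdot\layer{-a}{\ell}=\layer{0}{\alpha}\ELTrop(x)$ is immediate from the ELT multiplication rule, and the degenerate cases ($\alpha=0$ or $x=0$) reduce to the convention $\ELTrop(0)=-\infty$.

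For Part~(3), let $x,y$ have leading monomials $\ell_1 t^{a_1}$ and $\ell_2 t^{a_2}$, so the leading candidate of $xy$ is $\ell_1\ell_2 t^{a_1+a_2}$. If $\ell_1\ell_2\ne 0$ this is the genuine leading monomial of $xy$ and we read off the equality $\ELTrop(xy)=\ELTrop(x)\ELTrop(y)$, from which $\vDash$ follows by reflexivity of the surpassing relation. If instead $\ell_1\ell_2=0$ (possible since $\L$ is only assumed to be a commutative ring), then $\ELTrop(x)\ELTrop(y)=\layer{-(a_1+a_2)}{0}$ is a zero-layer element, while $v(xy)>a_1+a_2$ forces $\t(\ELTrop(xy))<-(a_1+a_2)$; the ELT addition rule then yields $\ELTrop(xy)+\layer{-(a_1+a_2)}{0}=\layer{-(a_1+a_2)}{0}=\ELTrop(x)\ELTrop(y)$, giving $\ELTrop(x)\ELTrop(y)\vDash\ELTrop(xy)$.

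Part~(1) follows the same template with $\ELTrop(x)+\ELTrop(y)$ in place of the product. If the leading degrees $a_1,a_2$ differ, one leading term dominates on both sides of the equation and we get equality directly. If $a_1=a_2=a$, the leading candidate of $x+y$ is $(\ell_1+\ell_2)t^a$; the subcase $\ell_1+\ell_2\ne 0$ gives equality, and when $\ell_1+\ell_2=0$ the ELT sum $\ELTrop(x)+\ELTrop(y)=\layer{-a}{0}$ sits in $\zeroset{\R}$ and absorbs the strictly-smaller-tangible-value element $\ELTrop(x+y)$ exactly as in Part~(3). The main obstacle throughout is purely bookkeeping: distinguishing the case in which a leading coefficient genuinely vanishes --- producing a zero-layer element on one side of $\vDash$ --- from the case in which the true leading monomial of the sum or product appears only at a later degree in the Puiseux expansion; once that split is made explicit, the ELT operations dispatch the verification mechanically.
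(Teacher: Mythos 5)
This lemma is quoted from \cite{BS}; the present paper states it without proof, so there is no in-text argument to compare against. That said, the direct case analysis on leading monomials is the only sensible approach, and your parts (1) and (3) are correct: when the leading degrees differ, the dominant side matches exactly, and when they agree, the coefficient sum or product either survives (giving equality) or vanishes (landing in $\zeroset{\R}$ with strictly larger tangible value, which is precisely what $\vDash$ absorbs). The sign bookkeeping from the min-valuation on $\L\{\{t\}\}$ to the max-plus tangible value in $\R$ is also handled consistently.

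Part (2), however, has a genuine gap. You assert that the degenerate case $\alpha=0$ with $x\neq 0$ ``reduces to the convention $\ELTrop(0)=-\infty$,'' but this describes only the left-hand side: $\ELTrop(\alpha x)=\ELTrop(0)=-\infty$, whereas the right-hand side is $\layer{0}{0}\cdot\layer{(-a)}{\ell}=\layer{(-a)}{0}$, and $\layer{(-a)}{0}\neq -\infty$ in $\overline{\R}$ --- zero-layer elements and the formally adjoined $-\infty$ are distinct. You also skip the case $\alpha\neq 0$, $\ell\neq 0$, $\alpha\ell=0$, which can occur because $\L$ is only assumed to be a commutative ring; there the right-hand side is again $\layer{(-a)}{0}$ while the left-hand side has strictly smaller tangible value (or is $-\infty$), so the asserted equality fails once more. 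In both situations the surpassing $\layer{0}{\alpha}\,\ELTrop(x)\vDash\ELTrop(\alpha x)$ would still hold by exactly the absorption argument you use in parts (1) and (3), so the fix is either to invoke stronger hypotheses (e.g.\ $\L$ an integral domain and $\alpha\neq 0$) or to flag explicitly that part (2), as quoted, only holds up to $\vDash$ in these corner cases; your proposal does neither, and positively claims an equality that your own argument does not establish there.
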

We remark that in the case in which $\R$ is an ELT integral domain, meaning $\L$ is an integral domain, we have $\ELTrop\left(x\right)\ELTrop\left(y\right)=\ELTrop\left(xy\right)$ for all $x,y\in\L\{\{t\}\}$. \\

Let us examine the relation $x\vDash\ELTrop\left(y\right)$ a bit more deeply. If $x=\ELTrop\left(y\right)$, it means that $x$ can be lifted to a Puiseux series which has $x$ as its leading monomial. Otherwise, we have that $x$ is of layer zero, and its tangible value is bigger than the tangible value of $\ELTrop\left(y\right)$; so one may say that $x$ can also be lifted to a Puisuex series with leading coefficient $x$, where we allow it to have a zero coefficient in its leading monomial.

\subsection{Semirings with a Negation Map and ELT Rings}\label{sub:ELT-symmetrized}

Semirings need not have additive inverses to all of the elements. While some of the theory of rings can be copied ``as-is'' to semirings, there are many facts about rings which use the additive inverses of the elements. The idea of negation maps on semirings (sometimes called symmetrized semirings) is to imitate the additive inverse map. Semirings with negation maps are discussed in \cite{Akian1990}, \cite{Gaubert1992}, \cite{Gaubert1997}, \cite{Akian2008}, \cite{Akian2014}, \cite{Rowen2016}.

\begin{defn}
Let $R$ be a semiring. A map $(-):R\to R$ is a \textbf{negation map} (or a \textbf{symmetry})
if the following properties hold:
\begin{enumerate}
\item $\forall a,b\in R:(-)\left(a+b\right)=(-)a+(-)b$.
\item $(-)0_R=0_R$.
\item $\forall a,b\in R:(-)\left(a\cdot b\right)=a\cdot\left((-)b\right)=\left((-)a\right)\cdot b$.
\item $\forall a\in R:(-)\left((-)a\right)=a$.
\end{enumerate}
We say that $\left(R,(-)\right)$ is a \textbf{semiring with a negation map} (or a \textbf{symmetrized semiring}). If $(-)$ is clear from the context, we will not mention it.
\end{defn}

We give several examples of semirings with negation maps:
\begin{itemize}
\item A trivial example of a negation map (over any semiring) is $(-)a=a$.
\item If $R$ is a ring, it has a negation map $(-)a=-a$.
\item If $\R$ is an ELT algebra, we have a negation map given by $(-)a=\minus a$.
\end{itemize}

The last example is the central example for our theory, since it shows that any ELT algebra is equipped with a natural negation map. Thus, the theory of semirings with negation maps can be used when dealing with ELT algebras.\\

We now present several notations from this theory:
\begin{itemize}
\item $a+(-)a$ is denoted $a^\circ$.
\item $R^\circ=\left\{a^\circ\middle|a\in R\right\}$.
\item We define two partial orders on $R$:
\begin{itemize}
\item The relation $\symmdash$ defined by
$$a\symmdash b\Leftrightarrow \exists c\in R^\circ:a=b+c$$
\item The relation $\nabla$ defined by
$$a\nabla b\Leftrightarrow a+(-)b\in R^\circ$$
\end{itemize}
\end{itemize}

If $\R$ is an ELT algebra, then some of these notations have already been defined. For example, $a^\circ=\zero a$, $\R^\circ=\zeroset{\R}$ and the relation $\symmdash$ is the relation $\vDash$.

\section{ELT Transfer Principle}

The transfer principles are two theorems, presented in \cite{Akian2008}, which allows to conveniently transfer equalities between polynomial expressions in the classical theory to theorems about semirings with a negation map. We recall that any ELT algebra $\R$ has a negation map
$$\left(-\right)\layer{a}{\ell}=\layer{a}{-\ell}$$
and thus we may view each ELT algebra as a semiring with a negation map.\\

In this section, we use the transfer principles to prove two transfer principles for the ELT theory, and use these transfer principles to study the ELT adjoint matrix.

\subsection{The Transfer Principle}

In this subsection, we briefly introduce the two classical transfer principles given in \cite{Akian2008}.

\begin{defn}
A \textbf{positive polynomial expression} in the variables $\lambda_1,\dots,\lambda_m$ is a formal expression produced by the context-free grammar $E\mapsto E+E,\left(E\right)\times\left(E\right)$, where the symbols $0,1,\lambda_1,\dots,\lambda_m$ are thought of as terminal symbols of the grammar. A \textbf{monomial} in a positive polynomial expression is a sum of expressions of the form $c_I\lambda_1^{i_1}\dots \lambda_m^{i_m}$, where $I=\left(i_1,\dots,i_n\right)$ is fixed.
\end{defn}

That means that $0,1,\lambda_1,\dots,\lambda_m$ are positive polynomial expressions. Also, $\lambda_1+\left(\lambda_2\right)\times\left(\lambda_1+\lambda_3\right)$ is a positive polynomial expression. Any positive polynomial expression $E$ can be interpreted as a polynomial in $\N\left[\lambda_1,\dots,\lambda_m\right]$. We say that a monomial $\lambda_1^{i_1}\dots \lambda_m^{i_m}$ \textbf{appears} in the expression $E$ if there exists a positive integer $c$ such that $c\lambda_1^{i_1}\dots \lambda_m^{i_m}$ appears in the expansion of the polynomial obtained by interpreting $E$ in $\N\left[\lambda_1,\dots,\lambda_m\right]$.

If $R$ is a semiring with a negation map, and if $c\lambda_1^{i_1}\dots \lambda_m^{i_m}$ is a monomial, we define
$$\left(-\right)\left(c\lambda_1^{i_1}\dots \lambda_m^{i_m}\right)=\left(\left(-\right)c\right)\lambda_1^{i_1}\dots \lambda_m^{i_m}$$

\begin{defn}
Let $R$ be a semiring with a negation map. A \textbf{polynomial expression} is a formal expression of the form $P^+-P^-$, where $P^+$ and $P^-$ are positive polynomial expressions. A \textbf{monomial} in $P$ is a sum of the monomials $c_I\lambda_1^{i_1}\dots \lambda_m^{i_m}$ from $P^+$ and $c'_I\lambda_1^{i_1}\dots \lambda_m^{i_m}$ from $P^-$, where $I=\left(i_1,\dots,i_n\right)$ is fixed. We say that a monomial \textbf{appears} in the polynomial expression $P$, if it appears either in $P^+$ or in $P^-$.
\end{defn}

\begin{defn}
Let $P$ and $Q$ be polynomial expressions. We say that the identity $P=Q$ is valid in a semiring with a negation map $R$, if it holds for any semiring with a negation map $R$ and for any substitution $\lambda_1=r_1,\dots,\lambda_m=r_m$ of $r_1,\dots,r_m\in R$.
\end{defn}

Recall the relations $\symmdash$ and $\nabla$ from \sSref{sub:ELT-symmetrized}.

\begin{defn}
Let $P$ and $Q$ be polynomial expressions.
\begin{enumerate}
\item We say that the identity $P\nabla Q$ holds in all commutative semirings with a negation map, if for any semiring with a negation map $R$ and for any substitution $\lambda_1=r_1,\dots,\lambda_m=r_m$ of $r_1,\dots,r_m\in R$,
    $$P\left(r_1,\dots,r_n\right)\nabla Q\left(r_1,\dots,r_n\right)$$
\item We say that the identity $P\symmdash Q$ holds in all commutative semirings with a negation map, if for any semiring with a negation map $R$ and for any substitution $\lambda_1=r_1,\dots,\lambda_m=r_m$ of $r_1,\dots,r_m\in R$,
    $$P\left(r_1,\dots,r_n\right)\symmdash Q\left(r_1,\dots,r_n\right)$$
\end{enumerate}
\end{defn}

We recall the transfer principle (\cite[Theorems 4.20 and 4.21]{Akian2008}):

\begin{thm}[Transfer principle, weak form]
Let $P$ and $Q$ be polynomial expressions. If the identity $P=Q$ holds in all commutative rings, then the identity $P\nabla Q$ holds in all commutative semirings with negation map.
\end{thm}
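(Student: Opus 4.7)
The plan is to reduce the statement to a tautological identity in the free commutative semiring $\N[\lambda_1,\dots,\lambda_m]$, after which the negation map lets us rearrange everything into $R^\circ$. Write $P = P^+ - P^-$ and $Q = Q^+ - Q^-$, where $P^\pm$ and $Q^\pm$ are positive polynomial expressions. Since $P = Q$ holds in every commutative ring, it holds in particular in $\ZZ[\lambda_1,\dots,\lambda_m]$, so $P^+ - P^- = Q^+ - Q^-$ as elements of $\ZZ[\lambda_1,\dots,\lambda_m]$. Moving the negatives to the opposite sides, this is equivalent to the identity
\[
P^+ + Q^- \;=\; P^- + Q^+
\]
holding in $\N[\lambda_1,\dots,\lambda_m]$.

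Next I would invoke the universal property of $\N[\lambda_1,\dots,\lambda_m]$ as the free commutative semiring on $m$ generators. Any substitution $\lambda_i \mapsto r_i$ in a commutative semiring $R$ extends uniquely to a semiring homomorphism $\N[\lambda_1,\dots,\lambda_m] \to R$. Applying this homomorphism to the identity above, we obtain
\[
P^+(r) + Q^-(r) \;=\; P^-(r) + Q^+(r)
\]
in $R$, where $r = (r_1,\dots,r_m)$.

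Now I would expand $P(r) + (-)Q(r)$ using the axioms of the negation map. Because $(-)$ is additive and $(-)(-)a = a$, we have $P(r) = P^+(r) + (-)P^-(r)$ and $(-)Q(r) = (-)Q^+(r) + Q^-(r)$. Hence
\[
P(r) + (-)Q(r) \;=\; \bigl(P^+(r) + Q^-(r)\bigr) + (-)\bigl(P^-(r) + Q^+(r)\bigr).
\]
Substituting the equality from the previous paragraph, the right-hand side becomes $x + (-)x = x^\circ$ where $x = P^+(r) + Q^-(r)$, which lies in $R^\circ$ by definition. Therefore $P(r) + (-)Q(r) \in R^\circ$, i.e.\ $P(r) \nabla Q(r)$, as required.

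The only subtle point is the appeal to the universal property of $\N[\lambda_1,\dots,\lambda_m]$, which is essentially the statement that in any commutative semiring, two positive polynomial expressions in $\lambda_1,\dots,\lambda_m$ evaluate to the same element whenever they agree as formal elements of $\N[\lambda_1,\dots,\lambda_m]$; this is a standard consequence of the associativity, commutativity and distributivity axioms and does not require the negation map. Once this is in hand, the rest of the argument is just bookkeeping with the axioms $(-)(a+b) = (-)a + (-)b$ and $(-)(-)a = a$.
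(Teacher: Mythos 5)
The paper does not prove this theorem itself: it is quoted verbatim from \cite[Theorem~4.20]{Akian2008}, and the argument there is precisely the one you reconstruct. Your proof is correct: specializing $P=Q$ to $\ZZ[\lambda_1,\dots,\lambda_m]$ yields the cancellation-free identity $P^+ + Q^- = P^- + Q^+$ in $\N[\lambda_1,\dots,\lambda_m]$, the universal property of $\N[\lambda_1,\dots,\lambda_m]$ as the free commutative semiring transports it to $R$, and regrouping $P(r)+(-)Q(r)$ as $\left(P^+(r)+Q^-(r)\right)+(-)\left(P^-(r)+Q^+(r)\right) = x+(-)x = x^\circ \in R^\circ$ finishes.
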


\begin{thm}[Transfer principle, strong form]
Let $P$ and $Q$ be polynomial expressions. If the identity $P=Q$ holds in all commutative rings, and if $Q=Q^+-Q^-$ for some positive polynomial expressions such that there is no monomial appearing simultaneously in $Q^+$ and $Q^-$, then the identity $P\symmdash Q$ holds in all commutative semirings with negation map.
\end{thm}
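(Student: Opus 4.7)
The plan is to translate the hypothesis ``$P=Q$ in commutative rings'' into equality of integer coefficients in $\mathbb{Z}[\lambda_1,\ldots,\lambda_m]$, and then push this information into an arbitrary commutative semiring $R$ with negation map, monomial-by-monomial. First I would write $P=P^+-P^-$ and expand each positive polynomial expression as a sum of monomials with natural-number coefficients, $P^+=\sum_I p_I^+\lambda^I$, $P^-=\sum_I p_I^-\lambda^I$, $Q^+=\sum_I q_I^+\lambda^I$, $Q^-=\sum_I q_I^-\lambda^I$. Specialising the ring identity $P=Q$ to $R=\mathbb{Z}[\lambda_1,\ldots,\lambda_m]$ with $\lambda_i\mapsto\lambda_i$ yields the coefficient identity $p_I^+-p_I^-=q_I^+-q_I^-$ in $\mathbb{Z}$ for every multi-index $I$.

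The key step is to exploit the reducedness hypothesis on $Q$: for each $I$, either $q_I^-=0$ or $q_I^+=0$. In the first case $p_I^+=q_I^++p_I^-$, and in any commutative semiring with negation map I may rewrite the $I$-th contribution to $P(r)$ as
$$p_I^+ r^I + (-)p_I^- r^I = q_I^+ r^I + \bigl(p_I^- r^I + (-)p_I^- r^I\bigr) = q_I^+ r^I + (p_I^- r^I)^\circ,$$
which is the $I$-th contribution to $Q(r)$ plus an element of $R^\circ$. The second case is symmetric, yielding $(-)q_I^- r^I + (p_I^+ r^I)^\circ$. Because $R^\circ$ is closed under addition (the identity $(a+b)^\circ=a^\circ+b^\circ$ is immediate from additivity of $(-)$), summing the $I$-th equalities over all multi-indices gives $P(r)=Q(r)+c$ with $c\in R^\circ$, which is exactly $P(r)\symmdash Q(r)$.

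The main technical obstacle I anticipate is the preliminary verification that a positive polynomial expression, viewed as a parse tree of the grammar $E\mapsto E+E,(E)\times(E)$, evaluates in an arbitrary commutative semiring to the expected sum $\sum_I c_I r^I$ with natural-number coefficients (where $c_I r^I$ abbreviates the $c_I$-fold sum $r^I+\cdots+r^I$). This should follow by induction on the depth of the parse tree, using commutativity, associativity, and distributivity to regroup terms, but it is where one pays the price for working with formal expressions rather than polynomials. Once this bookkeeping is in place, the crucial observation is that the non-overlap assumption is \emph{exactly} what lets each monomial's contribution split into a genuine $Q$-part plus a symmetric element of $R^\circ$; without it, one only obtains that $P(r)+(-)Q(r)\in R^\circ$, which is the weaker relation $\nabla$ from the weak form of the transfer principle.
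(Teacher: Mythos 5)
The paper does not give a proof of this theorem; it is recalled verbatim from \cite[Theorems 4.20 and 4.21]{Akian2008}. Your argument is correct and is essentially the one in that reference: specialising the ring identity to $\mathbb{Z}[\lambda_1,\ldots,\lambda_m]$ yields the coefficient equalities $p_I^+-p_I^-=q_I^+-q_I^-$; the non-overlap hypothesis on $Q$ forces $q_I^+=0$ or $q_I^-=0$ for each $I$, which lets you rewrite each monomial contribution of $P(r)$ as the matching contribution of $Q(r)$ plus an element of $R^\circ$; and since $R^\circ$ is an additive submonoid the terms sum to $P(r)=Q(r)+c$ with $c\in R^\circ$. The parse-tree induction you flag, showing that any positive polynomial expression evaluates in a commutative semiring to $\sum_I c_I r^I$ with the $c_I$ its coefficients in $\mathbb{N}[\lambda_1,\ldots,\lambda_m]$, is indeed the one prerequisite that legitimises working monomial-by-monomial, and it goes through by straightforward structural induction using commutativity, associativity and distributivity.
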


The transfer principle allows us to prove several important theorems in a rather convenient way.

\begin{cor}[Multiplicativity of determinant]\label{cor:trans-det-mult}
Let $R$ be a semiring with a negation map, and let $A\in R^{n\times n}$. We define:
\begin{eqnarray*}
\det\left(A\right)^+&=&\sum_{\sigma\in A_n}a_{1,\sigma\left(1\right)}\dots a_{n,\sigma\left(n\right)}\\
\det\left(A\right)^-&=&\sum_{\sigma\in S_n\setminus A_n}a_{1,\sigma\left(1\right)}\dots a_{n,\sigma\left(n\right)}\\
\det\left(A\right)&=&\det\left(A\right)^+-\det\left(A\right)^-
\end{eqnarray*}
In \cite[Corollary 4.8]{Akian2008}, it is proven that if $R$ is a semiring with a negation map, then
$$\forall A,B\in R^{n\times n}:\det\left(AB\right)\symmdash\det\left(A\right)\det\left(B\right)$$
\end{cor}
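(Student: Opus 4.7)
The plan is to invoke the strong form of the transfer principle stated above. The classical identity $\det(AB)=\det(A)\det(B)$ is a polynomial identity in the entries $a_{ij}, b_{ij}$ that holds in every commutative ring, so with $P=\det(AB)^+-\det(AB)^-$ and $Q=\det(A)\det(B)$ the classical hypothesis $P=Q$ is immediate. What remains is to exhibit a decomposition $Q=Q^+-Q^-$ into positive polynomial expressions having no monomial in common; the strong form will then conclude $P\symmdash Q$, which is the statement we want.

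First, I would simply multiply out
$$(\det(A)^+-\det(A)^-)(\det(B)^+-\det(B)^-)=Q^+-Q^-,$$
with the natural choice
$$Q^+=\det(A)^+\det(B)^++\det(A)^-\det(B)^-,\qquad Q^-=\det(A)^+\det(B)^-+\det(A)^-\det(B)^+,$$
both of which are positive polynomial expressions in the $a_{ij}, b_{ij}$, being sums of products of the positive polynomial expressions $\det(A)^{\pm}$ and $\det(B)^{\pm}$.

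Next, for the non-overlap condition, any term arising in either $Q^+$ or $Q^-$ has the form $a_{1,\sigma(1)}\cdots a_{n,\sigma(n)}\,b_{1,\tau(1)}\cdots b_{n,\tau(n)}$ for some $\sigma,\tau\in S_n$. Its multidegree determines the pair $(\sigma,\tau)$ uniquely, since the $a$-part recovers $\sigma$ from the set $\{(i,\sigma(i))\}_{i=1}^n$ of indices appearing, and analogously for $\tau$. Hence $\sgn(\sigma)\sgn(\tau)$ is a function of the multidegree, so the monomial (in the paper's sense of a fixed multidegree $I$) attached to this multidegree lies in exactly one of $Q^+$ or $Q^-$. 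The strong transfer principle then delivers $\det(AB)\symmdash\det(A)\det(B)$.

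The step I would expect to need the most care is this last multidegree bookkeeping: a ``monomial'' in the paper's sense is the whole sum of terms sharing a fixed multidegree, so one must confirm that each such class picks out a single $(\sigma,\tau)$, preventing a multidegree from absorbing two contributions of opposite sign and thereby spoiling the non-overlap condition. Everything else is a direct application of machinery already assembled in the preceding subsection.
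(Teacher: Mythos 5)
The paper does not actually prove this corollary; it cites it directly from \cite[Corollary 4.8]{Akian2008}, which in turn is an application of the strong transfer principle. Your proposal supplies exactly that application and it is correct: the classical identity $\det(AB)=\det(A)\det(B)$ holds over every commutative ring, and the decomposition
$$Q^+=\det(A)^+\det(B)^++\det(A)^-\det(B)^-,\qquad Q^-=\det(A)^+\det(B)^-+\det(A)^-\det(B)^+$$
has disjoint monomial support because the multidegree of $a_{1,\sigma(1)}\cdots a_{n,\sigma(n)}\,b_{1,\tau(1)}\cdots b_{n,\tau(n)}$ in the $2n^2$ variables $a_{i,j},b_{i,j}$ pins down the pair $(\sigma,\tau)$, hence $\sgn\sigma\,\sgn\tau$, and therefore whether the term sits in $Q^+$ or $Q^-$. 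The one small point worth making explicit, which you have correctly identified as the place requiring care, is that each such monomial occurs with coefficient exactly $1$ on the side it belongs to, so the ``no monomial appears simultaneously in $Q^+$ and $Q^-$'' hypothesis of the strong transfer principle holds on the nose. This matches the standard route to the result and is the argument the cited source uses, so you are essentially reconstructing the proof the paper delegates to \cite{Akian2008}.
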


\begin{cor}[Cayley-Hamilton theorem]\label{cor:cayley-hamilton}
Let $R$ be a semiring with a negation map, and let $A\in R^{n\times n}$. We know that over a commutative ring, $f_A\left(A\right)=0$. We can use the strong form of the transfer principle componentwise, and thus
$$f_A\left(A\right)\symmdash 0$$
In other words,
$$f_A\left(A\right)\in \left(R^\circ\right)^{n\times n}$$
\end{cor}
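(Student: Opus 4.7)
The plan is to invoke the strong form of the transfer principle one matrix entry at a time. First I would regard the $n^{2}$ entries of $A$ as formal variables, collect them into a symbolic matrix $X=(x_{ij})$, and form the characteristic polynomial $f_X(\lambda)=\det(\lambda I-X)\in\mathbb{Z}[\{x_{ij}\}][\lambda]$. Substituting $X$ back for $\lambda$ produces an $n\times n$ matrix $f_X(X)$ whose $(i,j)$-entry is a fixed element $P_{ij}\in\mathbb{Z}[\{x_{ij}\}]$. The classical Cayley--Hamilton theorem, valid in every commutative ring, asserts that each $P_{ij}$ is the zero polynomial, giving $n^{2}$ universal polynomial identities $P_{ij}=0$.

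To feed these identities into the strong transfer principle, I would split the monomials of each $P_{ij}$ according to the sign of their $\mathbb{Z}$-coefficient and collect them into positive polynomial expressions $P_{ij}^{+}$ and $P_{ij}^{-}$ with $P_{ij}=P_{ij}^{+}-P_{ij}^{-}$; this is legitimate because the determinant and matrix product are built entirely from sums, differences, and products of the entries, so $P_{ij}$ is a polynomial expression in the formal sense of the excerpt. The right-hand side $0$ of each identity decomposes trivially as $Q^{+}=Q^{-}=0$, and since neither piece contains any monomial, the non-overlap hypothesis required by the strong transfer principle is vacuous. The hypotheses of the strong transfer principle are therefore met for each pair $(i,j)$.

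Applying the strong transfer principle componentwise and then substituting the actual entries $a_{ij}$ of $A$ for the $x_{ij}$ yields $(f_{A}(A))_{ij}\symmdash 0$ in $R$ for every $(i,j)$. Recalling from \sSref{sub:ELT-symmetrized} that $a\symmdash 0$ is equivalent to $a\in R^{\circ}$, these $n^{2}$ componentwise statements combine into the single matrix assertion $f_{A}(A)\in(R^{\circ})^{n\times n}$. The only substantive point to verify is that $f_{X}(X)$ really does fit the polynomial-expression grammar, but this is routine once we use the formal determinant of \Cref{cor:trans-det-mult} in place of the classical one; so I do not anticipate a real obstacle, and the proof reduces essentially to a single componentwise invocation of the strong transfer principle on the classical Cayley--Hamilton identity.
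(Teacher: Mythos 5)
Your proposal is correct and takes essentially the same route as the paper, which compresses the whole argument into the sentence ``we can use the strong form of the transfer principle componentwise.'' Your write-up simply spells out what that means: view each entry of $f_X(X)$ as a polynomial expression built from the formal determinant of \Cref{cor:trans-det-mult}, note that classical Cayley--Hamilton gives $P_{ij}=0$ in every commutative ring, take $Q=0$ so the no-common-monomial hypothesis on $Q$ is vacuous, and conclude $P_{ij}\symmdash 0$, i.e.\ $f_A(A)\in\left(R^\circ\right)^{n\times n}$. One small caution in the phrasing: the sentence about splitting the monomials of $P_{ij}$ ``according to the sign of their $\mathbb{Z}$-coefficient'' reads oddly once you have also said $P_{ij}$ is the zero polynomial in $\mathbb{Z}[\{x_{ij}\}]$ --- after collecting, all coefficients vanish, so there is nothing to split by sign. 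What you actually need (and clearly intend, given your final paragraph) is the decomposition $P_{ij}=P_{ij}^+-P_{ij}^-$ that arises syntactically from the formal determinant and the $(-)$ in $\lambda I+(-)X$, before any collection of terms; that decomposition is part of the data of the polynomial expression and is what gets evaluated in the semiring with negation map. With that reading your argument is complete; note also that the strong form only constrains the decomposition of $Q$, so the particular choice of $P_{ij}^\pm$ never threatens the hypotheses, only the interpretation of the left-hand side.
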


\subsection{Formulation and Proof of the ELT Transfer Principle}

We first recall that all of our ELT algebras are commutative ELT rings, meaning $\R=\ELT{\F}{\L}$ where $\F$ is an abelian group and $\L$ is a commutative ring.\\

We would like to have a tool of proving polynomial identities over commutative ELT rings. Recall that any ELT ring is a semiring with a negation map $a\mapsto\layer{0}{-1}a$ (\sSref{sub:ELT-symmetrized}), and thus we may apply the transfer principle. In order to strengthen the general transfer principle, we will use results from tropical linear algebra. This is formulated in the following theorems:

\begin{thm}[ELT Transfer Principle for equality]\label{thm:Trans-Princ}
Let $P$ and $Q$ be polynomial expressions. Assume that the identity $P=Q$ holds in all commutative rings. If the identity $P=Q$ holds in all commutative tropical algebras, then the identity $P=Q$ holds in all commutative ELT rings.
\end{thm}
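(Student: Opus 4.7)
The approach is to combine the two hypotheses by separating the equality in $\R$ into an equality of tangible values (supplied by the tropical hypothesis) and an equality of layers (supplied by the weak form of the general transfer principle). Fix a substitution $\lambda_i = r_i$ with $r_i\in\R$, and write $a_P = \t\!\left(P(\vec{r})\right)$, $\ell_P = s\!\left(P(\vec{r})\right)$, and analogously for $Q$. Our goal is to show $a_P = a_Q$ and $\ell_P = \ell_Q$.

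For the tangible values, I would observe that the sorting/tangible projection $\t:\R\to\F_{\max}$ is a semiring homomorphism: this follows immediately from the ELT addition rule (whose three cases all give $\max(a_1,a_2)$ as the tangible value of the result) and from the product rule $\t\!\left(\layer{a_1}{\ell_1}\cdot\layer{a_2}{\ell_2}\right)=a_1+a_2$. Because $\t$ sends $(-)\alpha$ to $\t(\alpha)$, it is in fact a homomorphism of semirings with negation map from $\R$ to $\F_{\max}$ (the latter carrying the trivial negation map). Polynomial expressions commute with homomorphisms, so evaluating $P$ and $Q$ in $\R$ and applying $\t$ produces the tropical evaluations of $P$ and $Q$ at $\t(\vec{r})$. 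Since the identity $P=Q$ is assumed to hold in all commutative tropical algebras, this gives $a_P = \t(P(\vec{r})) = \t(Q(\vec{r})) = a_Q$.

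For the layers, I would invoke the weak form of the transfer principle: because $P=Q$ holds in all commutative rings, the relation $P\nabla Q$ holds in every commutative semiring with a negation map, so in particular $P(\vec{r}) + (-)Q(\vec{r})\in\R^{\circ}=\zeroset{\R}$. Writing $a := a_P = a_Q$, we have $P(\vec{r}) = \layer{a}{\ell_P}$ and $(-)Q(\vec{r}) = \layer{a}{-\ell_Q}$, and the ELT addition rule in the equal-tangibles case gives
\[
P(\vec{r}) + (-)Q(\vec{r}) \;=\; \layer{a}{\ell_P-\ell_Q}.
\]
Membership in $\zeroset{\R}$ forces $\ell_P - \ell_Q = 0$ in $\L$, i.e.\ $\ell_P = \ell_Q$, whence $P(\vec{r}) = Q(\vec{r})$, which is what we wanted.

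The step I expect to require the most care is the tangible-value step: one must state precisely what ``$P=Q$ holds in a tropical algebra'' means for a polynomial expression of the form $P^+-P^-$ (the cleanest reading is $P^+ + P^- = Q^+ + Q^-$ in the tropical algebra, or equivalently $P=Q$ when the tropical algebra is viewed as a semiring with the trivial negation map), and then verify that $\t$ genuinely respects both operations and the negation, so that the tropical hypothesis transfers into an equality of tangible values in $\R$. Once this is pinned down, the combination with the weak transfer principle is the short bookkeeping computation above.
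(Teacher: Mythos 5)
Your proof is correct and uses exactly the same two ingredients as the paper's argument: the tangible-projection semiring homomorphism $\t\colon\R\to\F_{\max}$ (the paper's \Lref{lem:hom-elt-maxplus}) to handle the tangible values, and the weak form of the general transfer principle to force the difference of layers into $\zeroset{\R}$, combined by the same case analysis as in \Lref{lem:sum-zero-one-big-surpass}. The paper's presentation differs only in that it proves the surpassing version (\Tref{thm:Trans-Princ-Surpass}) and deduces the equality version from it via antisymmetry of $\vDash$, whereas you prove the equality version directly by carrying out the equal-tangible-values case of that lemma inline.
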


\begin{thm}[ELT Transfer Principle for surpassing]\label{thm:Trans-Princ-Surpass}
Let $P$ and $Q$ be polynomial expressions. Assume that the identity $P=Q$ holds in all commutative rings. If the identity $P\ge Q$ holds in all commutative tropical algebras, then the identity $P\vDash Q$ holds in all commutative ELT rings.
\end{thm}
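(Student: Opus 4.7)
The plan is to combine the weak form of the transfer principle with the tropical hypothesis, in order to upgrade the relation $\nabla$ to the stronger relation $\vDash$ in any commutative ELT ring $\R = \ELT{\F}{\L}$. First, I would apply the weak transfer principle to the assumption that $P = Q$ holds over all commutative rings; since $\R$ is a commutative semiring with negation map $(-)a = \minus \cdot a$, this yields $P \nabla Q$, which in ELT notation reads $P(\vec r) + (-)Q(\vec r) \in \zeroset{\R}$ for every substitution $\vec r$ — that is, this element has layer $0$.

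Next I would extract the tangible content. The tangible map $\t:\R \to \F$ satisfies $\t(\alpha + \beta) = \max\{\t(\alpha), \t(\beta)\}$ and $\t(\alpha\beta) = \t(\alpha) + \t(\beta)$, and is invariant under $(-)$; inductively on the structure of the expression, $\t(P(\vec r))$ agrees with the max-plus evaluation of $P$ in the tropical algebra over $\F$ (with all negations ignored). The tropical hypothesis therefore forces $\t(P(\vec r)) \ge \t(Q(\vec r))$ in $\F$, and I would then split into the two relevant cases. If $\t(P) > \t(Q)$, the ELT addition rule gives $P + (-)Q = P$, so $P \in \zeroset{\R}$ by the first step; choosing $z := P$, one has $\t(z) > \t(Q)$, whence $Q + z = z = P$, i.e.\ $P \vDash Q$. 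If instead $\t(P) = \t(Q)$, the addition rule produces $P + (-)Q = \layer{\t(P)}{s(P) - s(Q)}$, and membership in $\zeroset{\R}$ forces $s(P) = s(Q)$ in $\L$; thus $P = Q$ as elements of $\R$, and taking $z := \zero \cdot P \in \zeroset{\R}$ one has $Q + z = Q = P$, again giving $P \vDash Q$.

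The step I expect to be the most delicate is the second: one must verify carefully that the tangible value of a polynomial expression truly coincides with its max-plus evaluation, even in the presence of the negation map and of cancellations among leading layers. The essential point is that a cancellation in the layer coefficient of a dominant monomial collapses the layer to $0$ but leaves the tangible value intact, so the naive max-plus formula for $\t$ is preserved. Once this is secured, the case analysis above closes the argument; the only remaining bookkeeping is the edge case where some sub-expression evaluates to the formal element $-\infty$ of $\overline{\R}$, which reduces to a routine check via the antiring property of $\overline{\R}$.
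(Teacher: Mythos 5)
Your proof is correct and takes the same approach as the paper: apply the weak transfer principle to get $s\left(P+\minus Q\right)=0$, use that $\t$ is a semiring homomorphism onto $\F_{\max}$ to deduce $\t\left(P\right)\ge\t\left(Q\right)$ from the tropical hypothesis, and finish with the two-case analysis on whether $\t\left(P\right)>\t\left(Q\right)$ or $\t\left(P\right)=\t\left(Q\right)$. The only difference is that the paper packages the second step as \Lref{lem:hom-elt-maxplus} and the case analysis as \Lref{lem:sum-zero-one-big-surpass}, while you have inlined both.
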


We prove two lemmas which will help us prove the theorems:

\begin{lem}\label{lem:sum-zero-one-big-surpass}
Let $\R$ be an ELT algebra, and let $\alpha,\beta\in\R$. If $s\left(\alpha+\minus\beta\right)=0$, and if $\t\left(\alpha\right)\ge\t\left(\beta\right)$, then $\alpha\vDash\beta$.
\end{lem}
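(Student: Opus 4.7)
The plan is to prove this by direct case analysis on the relative tangible values of $\alpha$ and $\beta$, using only the defining addition rule of $\R$ and the definition of $\vDash$. I would begin by writing $\alpha=\layer{a_1}{\ell_1}$ and $\beta=\layer{a_2}{\ell_2}$, so that $\minus\beta=\layer{a_2}{-\ell_2}$; the hypothesis $\tau(\alpha)\ge\tau(\beta)$ becomes $a_1\ge a_2$, leaving only two cases to consider.

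In the strict case $a_1>a_2$, the addition rule yields $\alpha+\minus\beta=\layer{a_1}{\ell_1}=\alpha$, so the hypothesis $s(\alpha+\minus\beta)=0$ forces $\ell_1=0$, i.e.\ $\alpha\in\zeroset{\R}$. Then, again because $a_1>a_2$, we have $\beta+\alpha=\alpha$, exhibiting $\alpha$ itself as the witness $z\in\zeroset{\R}$ required by the definition $\alpha\vDash\beta$.

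In the equality case $a_1=a_2$, the addition rule gives $\alpha+\minus\beta=\layer{a_1}{\ell_1-\ell_2}$, and the hypothesis $s(\alpha+\minus\beta)=0$ now forces $\ell_1=\ell_2$. Hence $\alpha=\beta$, and it suffices to observe that $\vDash$ is reflexive: taking $z=\layer{a_2}{0}\in\zeroset{\R}$, the addition rule with equal tangible values yields $\beta+z=\layer{a_2}{\ell_2+0}=\beta=\alpha$, so $\alpha\vDash\beta$ as needed.

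There is no real obstacle here; the whole argument is bookkeeping with the piecewise definition of $+$. The only subtle point to double-check is that reflexivity of $\vDash$ is available without any assumption on $\F$ (one cannot rely on the existence of an element strictly below $a_2$ in $\F$), which is why I prefer the explicit choice $z=\layer{a_2}{0}$ rather than any $z$ with smaller tangible value.
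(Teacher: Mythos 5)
Your proof is correct and takes essentially the same route as the paper: split on $\t(\alpha)>\t(\beta)$ versus $\t(\alpha)=\t(\beta)$, read off the layer condition from the piecewise definition of addition, and then exhibit $z\in\zeroset{\R}$ with $\alpha=\beta+z$ (with $z=\alpha$ in the strict case and, in the equality case, after concluding $\alpha=\beta$, using reflexivity of $\vDash$). Your explicit witness $z=\layer{a_2}{0}$ for reflexivity is a harmless elaboration of the paper's tacit appeal to $\vDash$ being a partial order.
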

\begin{proof}
Denote $\alpha=\layer{a}{\ell}$, $\beta=\layer{b}{k}$. $\t\left(\alpha\right)\ge\t\left(\beta\right)$ means $a\ge b$. We have two options:
\begin{enumerate}
\item If $a=b$, then $\alpha+\minus\beta=\layer{a}{\ell-k}\in\zeroset{\R}$. Thus, $\ell=k$, which implies $\alpha=\beta$.
\item If $a>b$, then $\alpha+\minus\beta=\alpha\in\zeroset{\R}$. Thus, $\ell=0$, and $\alpha=\alpha+\beta\vDash\beta$.
\end{enumerate}
In any case $\alpha\vDash\beta$, and thus we are finished.
\end{proof}

\begin{lem}\label{lem:hom-elt-maxplus}
Let $\R=\ELT{\F}{\L}$ be an ELT algebra. Endow $\F$ with a max-plus algebra, $\F_{\max}$. Then the function $\t:\R\to\F_{\max}$ is a ``homomorphism'', in the sense that:
\begin{enumerate}
\item $\forall x,y\in\R:\t\left(x+y\right)=\t\left(x\right)\oplus\t\left(y\right)$.
\item $\forall x,y\in\R:\t\left(xy\right)=\t\left(x\right)\odot\t\left(y\right)$.
\end{enumerate}
\end{lem}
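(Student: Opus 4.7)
The plan is to prove both claims by direct case analysis straight from the definition of the ELT operations. Since the addition $\oplus$ on $\F_{\max}$ is $\max$ and the multiplication $\odot$ on $\F_{\max}$ is the group operation $+_\F$ of $\F$, both identities should follow by simply reading off the tangible value from the definition of $+$ and $\cdot$ in $\R$.

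For part (2), I would start, since it is the easier of the two. Writing $x = \layer{a_1}{\ell_1}$ and $y = \layer{a_2}{\ell_2}$, the definition of ELT multiplication gives $xy = \layer{a_1 +_\F a_2}{\ell_1 \cdot_\L \ell_2}$, so $\tau(xy) = a_1 +_\F a_2 = \tau(x) \odot \tau(y)$, and we are done.

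For part (1), with the same notation, I would split into the three cases given in the definition of ELT addition. If $a_1 > a_2$, then $x+y = \layer{a_1}{\ell_1}$, so $\tau(x+y) = a_1 = \max\{a_1,a_2\} = \tau(x)\oplus \tau(y)$; the case $a_1 < a_2$ is symmetric. If $a_1 = a_2$, then $x+y = \layer{a_1}{\ell_1 +_\L \ell_2}$, so $\tau(x+y) = a_1 = \max\{a_1,a_2\} = \tau(x)\oplus\tau(y)$. In all three cases the equality holds, so the proof is complete.

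There is no real obstacle here: the lemma is essentially built into the definition of the ELT operations, and the only thing worth pointing out is that the layer information is completely ignored by $\tau$, so the addition rule on $\R$ collapses to the $\max$ rule on $\F_{\max}$ regardless of what happens in the layering semiring $\L$. (In particular, the case $a_1 = a_2$ works even when $\ell_1 +_\L \ell_2 = 0$, because $\tau$ only records the tangible value.)
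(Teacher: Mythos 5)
Your proof is correct and follows essentially the same direct case-by-case computation as the paper's own argument; the only cosmetic difference is the order in which you treat the two parts, and your closing remark about $\ell_1 +_\L \ell_2 = 0_\L$ is a nice clarification but does not change the substance.
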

\begin{proof}
Take $\layer{a_{1}}{\ell_{1}},\layer{a_{2}}{\ell_{2}}\in\R$.
\begin{enumerate}
\item If $a_1=a_2$, then
$$\t\left(\layer{a_{1}}{\ell_{1}}+\layer{a_{2}}{\ell_{2}}\right)=\t\left(\layer{a_1}{\ell_1+_\L\ell_2}\right)=a_1=a_1\oplus a_2=\t\left(\layer{a_{1}}{\ell_{1}}\right)\oplus\t\left(\layer{a_{2}}{\ell_{2}}\right)$$
Otherwise, without loss of generality, $a_1>a_2$, and thus
$$\t\left(\layer{a_{1}}{\ell_{1}}+\layer{a_{2}}{\ell_{2}}\right)=\t\left(\layer{a_1}{\ell_1}\right)=a_1=a_1\oplus a_2=\t\left(\layer{a_{1}}{\ell_{1}}\right)\oplus\t\left(\layer{a_{2}}{\ell_{2}}\right)$$
\item $\t\left(\layer{a_{1}}{\ell_{1}}\cdot\layer{a_{2}}{\ell_{2}}\right)=\t\left(\layer{\left(a_{1}+_\F a_{2}\right)}{\ell_{1}\cdot_\L\ell_{2}}\right)=a_1+_\F a_2=a_1\odot a_2=\t\left(\layer{a_{1}}{\ell_{1}}\right)\odot\t\left(\layer{a_{2}}{\ell_{2}}\right)$
\end{enumerate}
\end{proof}

We now prove these theorems. We note that since \Tref{thm:Trans-Princ} follows from \Tref{thm:Trans-Princ-Surpass}, we will only prove the latter.\\

\begin{proof}[Proof of \Tref{thm:Trans-Princ-Surpass}]
By the weak form of the general transfer principle, $s\left(P+\minus Q\right)=0$.

Let $\R=\ELT{\F}{\L}$ be a commutative ELT ring. We will now prove that for any substitution $\lambda_1=r_1,\dots,\lambda_m=r_m$ of $r_1,\dots,r_m\in\R$,
$$\t\left(P\left(r_1,\dots,r_m\right)\right)\ge\t\left(Q\left(r_1,\dots,r_m\right)\right)$$
We endow $\F$ with the max-plus operations. By Lemma (\Lref{lem:hom-elt-maxplus}) $\t:\R\to\F_{\max}$ is a homomorphism. Therefore, for any ELT polynomial $p\in\R\left[\lambda\right]$ and for any substitution $\lambda_1=r_1,\dots,\lambda_m=r_m$ of $r_1,\dots,r_m\in\R$,
$$\t\left(p\left(r_1,\dots,r_m\right)\right)=p\left(\t\left(r_1\right),\dots,\t\left(r_m\right)\right)$$
Thus, for any substitution $\lambda_1=r_1,\dots,\lambda_m=r_m$ of $r_1,\dots,r_m\in\R$,
$$\t\left(P\left(r_1,\dots,r_m\right)\right)=P\left(\t\left(r_1\right),\dots,\t\left(r_m\right)\right)\ge Q\left(\t\left(r_1\right),\dots,\t\left(r_m\right)\right)=\t\left(Q\left(r_1,\dots,r_m\right)\right)$$

We have proven that $s\left(P+\minus Q\right)=0$ and that $\t\left(P\right)\ge\t\left(Q\right)$; by \Lref{lem:sum-zero-one-big-surpass}, we are finished.
\end{proof}

\begin{rem}
Throughout the uses of the ELT transfer principle for equality, we need to check the corresponding identity in commutative tropical algebras. However, since major work has been done in the supertropical theory (see \cite{IR3}, \cite{Izhaki2009} and \cite{Izhaki2010b}), we usually check that one of the following conditions holds:
\begin{enumerate}
\item The identity $P=Q$ holds in all commutative supertropical algebras.
\item The identity $\nu\left(P\right)=\nu\left(Q\right)$ holds in all commutative supertropical algebras.
\end{enumerate}
Similarly, to prove a surpassing relation, we usually check that one of the following conditions holds:
\begin{enumerate}
\item The identity $P\ghs Q$ holds in all commutative supertropical algebras.
\item The identity $P\ge_\nu Q$ holds in all commutative supertropical algebras.
\end{enumerate}
\end{rem}

This tool allows us to prove many polynomial surpassing and equalities without effort. An example is given in the next subsection.

\subsection{Multiplicity of the ELT Determinant}

We return to \Cref{cor:trans-det-mult}, which holds in particular over commutative ELT rings. We first formulate this corollary in the ``ELT language'':

\begin{cor}[Multiplicativity of the ELT determinant]\label{cor:elt-det-mult}
Let $\R$ be a commutative ELT ring. If~$A,B\in \left(\overline{\R}\right)^{n\times n}$, then
$$\det\left(AB\right)\vDash\det\left(A\right)\cdot\det\left(B\right)$$
\end{cor}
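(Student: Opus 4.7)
The plan is to directly specialize \Cref{cor:trans-det-mult} to the ELT setting; this proposition is essentially a translation of the abstract symmetrized-semiring statement into ELT vocabulary, so I expect no substantive obstacle.

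First, I would verify that $\overline{\R}$ qualifies as a commutative semiring with a negation map. On $\R$ the negation is $(-)\alpha = \minus\alpha$; extending by $(-)(-\infty)=-\infty$ produces a negation map on $\overline{\R}$ (the four axioms are checked trivially for $-\infty$ because $-\infty$ is absorbing for multiplication and neutral for addition). Consequently \Cref{cor:trans-det-mult} applies to matrices $A,B\in\overline{\R}^{n\times n}$, yielding
$$\det(AB)\symmdash\det(A)\cdot\det(B).$$

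Second, I would translate the abstract relation $\symmdash$ back into the ELT partial order $\vDash$. As recorded at the end of \sSref{sub:ELT-symmetrized}, under the ELT negation one has $a^\circ=\zero a$, hence $\R^\circ=\zeroset{\R}$ and $\symmdash$ coincides with $\vDash$; the same identification extends to $\overline{\R}$ since $-\infty$ contributes nothing non-trivial. Since \Cref{cor:trans-det-mult} is a statement about a single entry (both sides are scalars in $\overline{\R}$), no componentwise bookkeeping is required beyond this identification, and we obtain $\det(AB)\vDash\det(A)\cdot\det(B)$.

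The only conceptual point worth double-checking is that the expansion of $\det(A)$ and $\det(AB)$ remains well-defined when some entries equal $-\infty$, but this is immediate from the antiring property of $\overline{\R}$: any monomial $a_{1,\sigma(1)}\cdots a_{n,\sigma(n)}$ containing a $-\infty$ factor equals $-\infty$ and is absorbed in the sum. Thus the entire argument reduces to an application of the general transfer-principle corollary followed by a dictionary lookup, with no real difficulty arising.
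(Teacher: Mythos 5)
Your proposal is correct and matches the paper's reasoning: the paper itself presents \Cref{cor:elt-det-mult} simply as the ELT-language restatement of \Cref{cor:trans-det-mult}, relying on the identifications $\R^\circ=\zeroset{\R}$ and $\symmdash\,=\,\vDash$ recorded in \sSref{sub:ELT-symmetrized}. Your additional sanity checks (that the negation extends to $\overline{\R}$ and that $-\infty$ entries cause no trouble) are sound but were left implicit by the authors.
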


We now present several corollaries from the multiplicativity of the ELT determinant, which are two cases in which the ELT determinant is strictly multiplicative. First, we prove a lemma that will be helpful for the first case:

\begin{lem}\label{lem:non-zero-surpass-equal}
If $x,y\in\R$ satisfy $x\vDash y$, and if $s\left(x\right)\neq0$, then $x=y$.
\end{lem}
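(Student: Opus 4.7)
The plan is to unfold the definition of $\vDash$ and argue by the trichotomy of tangible values. By definition of $x \vDash y$, we can write $x = y + z$ for some $z \in \zeroset{\R}$, which means $s(z) = 0$. Write $y = \layer{b}{k}$ and $z = \layer{c}{0}$, so that $x = y + z$ is computed according to the three cases in the definition of ELT addition, namely $b > c$, $b < c$, or $b = c$.

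First I would dispose of the easy cases. If $b > c$, then by the ELT addition rule $y + z = y$, so $x = y$ immediately. If $b = c$, then $y + z = \layer{b}{k +_\L 0} = \layer{b}{k} = y$, again yielding $x = y$. In both cases the conclusion holds without ever needing the hypothesis $s(x) \neq 0$.

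The only remaining case is $b < c$, and this is where the hypothesis is used. Here the ELT addition rule gives $y + z = z$, hence $x = z \in \zeroset{\R}$. But then $s(x) = s(z) = 0$, contradicting the assumption that $s(x) \neq 0$. Therefore this case cannot occur, and in every permitted case $x = y$.

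There is no real obstacle; the statement is essentially a direct consequence of the trichotomy in the definition of ELT addition together with the observation that the only way $x \vDash y$ can fail to force $x = y$ is for $x$ to be dominated by the zero-layer perturbation, which forces $s(x) = 0$. The main thing to keep straight is that $z$ has layer $0$ by definition of $\zeroset{\R}$, so adding $z$ to $y$ cannot change $y$'s layer unless $z$'s tangible value strictly exceeds $y$'s.
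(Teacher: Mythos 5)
Your proof is correct and follows essentially the same route as the paper: unfold $x \vDash y$ as $x = y + z$ with $z$ of layer zero, and argue by cases on tangible values, using $s(x) \neq 0$ to rule out the case where $z$ dominates. The paper phrases it by comparing $\t(x)$ with $\t(y)$ and deriving a contradiction when $\t(x) > \t(y)$, while you compare $\t(y)$ with $\t(z)$, but these are the same argument.
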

\begin{proof}
Write $x=\layer{a}{\ell}$, $y=\layer{b}{k}$. Since $x\vDash y$, there is $c\in\F$ such that $x=y+\layer{c}{0}$. In other words,
$$\layer{a}{\ell}=\layer{b}{k}+\layer{c}{0}$$
By the definition of addition, $a\ge b$, and $a=\max\left\{b,c\right\}$. If $a>b$, then $a=c>b$, and thus
$$\layer{a}{\ell}=\layer{b}{k}+\layer{c}{0}=\layer{c}{0}$$
in contradiction to the fact that $\ell=s\left(x\right)\neq 0$. Thus, $x=y$.
\end{proof}

\begin{cor}\label{cor:det-is-mult-non-zero-prod}
If $s\left(\det\left(AB\right)\right)\neq 0$, then
$$\det\left(AB\right)=\det\left(A\right)\cdot\det\left(B\right)$$
\end{cor}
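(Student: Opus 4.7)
The plan is to combine the two results immediately preceding the corollary. By \Cref{cor:elt-det-mult}, we already know the surpassing relation
$$\det\left(AB\right)\vDash\det\left(A\right)\cdot\det\left(B\right).$$
So the only remaining ingredient is to promote this $\vDash$ to equality under the hypothesis $s\left(\det\left(AB\right)\right)\neq 0$.

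The key observation is that \Lref{lem:non-zero-surpass-equal} does exactly this: if $x\vDash y$ and $s\left(x\right)\neq 0$, then $x=y$. I would set $x=\det\left(AB\right)$ and $y=\det\left(A\right)\cdot\det\left(B\right)$, verify that the hypotheses of the lemma apply (the surpassing relation comes from \Cref{cor:elt-det-mult}, and the nonzero-layer condition on $x$ is precisely the assumption of the corollary), and conclude $x=y$, i.e.\ $\det\left(AB\right)=\det\left(A\right)\cdot\det\left(B\right)$.

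There is essentially no obstacle here; the corollary is a direct consequence of the two previous results, and the proof should be no more than a couple of lines. The only thing to be mildly careful about is to apply \Lref{lem:non-zero-surpass-equal} in the correct direction -- the hypothesis on the nonzero layer is placed on the \emph{larger} side of the $\vDash$ relation, which matches the layer of $\det\left(AB\right)$ as given in our assumption.
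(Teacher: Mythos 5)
Your proposal is correct and follows exactly the same route as the paper: invoke \Cref{cor:elt-det-mult} for the surpassing relation $\det\left(AB\right)\vDash\det\left(A\right)\cdot\det\left(B\right)$, then apply \Lref{lem:non-zero-surpass-equal} with $x=\det\left(AB\right)$ to upgrade it to equality. Your remark about the nonzero-layer hypothesis belonging on the larger side of $\vDash$ is a sensible sanity check and is consistent with how the lemma is stated.
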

\begin{proof}
From \Cref{cor:elt-det-mult},
$$\det\left(AB\right)\vDash\det\left(A\right)\cdot\det\left(B\right)$$
By \Lref{lem:non-zero-surpass-equal}, we get equality.
\end{proof}

Another case in which the determinant is multiplicative is when either $A$ or $B$ are invertible:

\begin{thm}\label{thm:det-is-mult-invert-mat}
If $A,B\in \left(\overline{\R}\right)^{n\times n}$, such that $A$ or $B$ are invertible. Then
$$\det\left(AB\right)=\det\left(A\right)\cdot\det\left(B\right)$$
\end{thm}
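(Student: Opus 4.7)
My plan is to bootstrap Corollary \ref{cor:elt-det-mult} to an equality by exploiting the invertibility hypothesis together with antisymmetry of $\vDash$. Without loss of generality assume $A$ is invertible, with inverse $A^{-1}$ satisfying $AA^{-1}=I$.

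Step 1: I would first show $\det(A)$ is a multiplicative unit of $\overline{\R}$. Applying Corollary \ref{cor:elt-det-mult} to the product $AA^{-1}=I$ gives $\layer{0}{1}=\det(I)\vDash \det(A)\cdot\det(A^{-1})$. Since $s(\layer{0}{1})=1\neq 0$, Lemma \ref{lem:non-zero-surpass-equal} upgrades this to the equality $\det(A)\cdot\det(A^{-1})=\layer{0}{1}$, so $\det(A^{-1})$ is a genuine two-sided multiplicative inverse of $\det(A)$.

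Step 2: I would apply Corollary \ref{cor:elt-det-mult} to the factorization $B=A^{-1}(AB)$, yielding $\det(B)\vDash \det(A^{-1})\cdot\det(AB)$. Multiplying both sides by $\det(A)$ preserves $\vDash$: if $x=y+z$ with $z\in\zeroset{\R}$, then $\det(A)\cdot x=\det(A)\cdot y+\det(A)\cdot z$ and $\det(A)\cdot z\in\zeroset{\R}$ because $\zeroset{\R}$ is an ideal of $\R$. Using $\det(A)\cdot\det(A^{-1})=\layer{0}{1}$ from Step 1, this gives $\det(A)\cdot\det(B)\vDash \det(AB)$.

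Step 3: Combining Step 2 with the reverse relation $\det(AB)\vDash \det(A)\cdot\det(B)$ supplied directly by Corollary \ref{cor:elt-det-mult}, and invoking antisymmetry of the partial order $\vDash$, I conclude $\det(AB)=\det(A)\cdot\det(B)$. The case in which $B$ is invertible is entirely symmetric via the factorization $A=(AB)B^{-1}$.

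The only point requiring care is the compatibility of $\vDash$ with multiplication, which is immediate from the ideal property of $\zeroset{\R}$ noted in the preliminaries; beyond that, the argument is a routine bootstrapping from the surpassing relation of Corollary \ref{cor:elt-det-mult}, and I foresee no substantive obstacle.
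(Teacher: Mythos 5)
Your proof is correct and takes essentially the same route as the paper: bootstrap Corollary \ref{cor:elt-det-mult} in two directions (once directly, once via a factorization using the inverse matrix), observe that $\vDash$ is compatible with multiplication, and conclude by antisymmetry of $\vDash$. The only difference is that your Step~1 explicitly justifies that $\det(A^{-1})$ is a two-sided inverse of $\det(A)$, a fact the paper's proof uses silently in the form $\det(B^{-1})=\left(\det(B)\right)^{-1}$, so if anything you have tightened a small loose end.
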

\begin{proof}
Assume that $B$ is invertible (the second direction is proved similarly). We note that by \Cref{cor:elt-det-mult},
$$\det\left(AB\right)\vDash\det\left(A\right)\cdot\det\left(B\right)$$
but also
$$\det\left(A\right)=\det\left(\left(AB\right)B^{-1}\right)\vDash\det\left(AB\right)\cdot\det \left(B^{-1}\right)=\det\left(AB\right)\cdot\left(\det\left(B\right)\right)^{-1}.$$
The latter surpassing implies that
$$\det\left(A\right)\cdot\det\left(B\right)\vDash\det\left(AB\right).$$
Since $\vDash$ is antisymmetric on $\overline{\R}$, the conclusion follows.
\end{proof}

Although the determinant is not multiplicative, a natural question is: if $AB$ is non-singular, is $BA$ also non-singular? The answer to this question is negative, as the following example demonstrates:
\begin{example}
In $\R=\ELT{\mathbb{R}}{\mathbb{C}}$, consider
$$A=\begin{pmatrix}\layer{1}{1} & \layer{1}{1}\\
\layer{2}{1} & \layer{3}{1}
\end{pmatrix}$$
Then $\det\left(AA^{t}\right)=\layer{8}{1}$, yet $\det\left(A^{t}A\right)=\layer{10}{0}$.
\end{example}

\subsection{The ELT Adjoint Matrix and Quasi-Invertible Matrices}
As we have seen, the invertible matrices in the ELT sense are limited. So, we shall try to generalize this. Our goal is to find an equivalent condition to the fact that $\det\left(A\right)$ is invertible.

\begin{defn}
Let $\R$ be a commutative ELT ring. A \textbf{quasi-identity matrix} is a matrix $\tilde{I}\in \left(\overline{\R}\right)^{n\times n}$, which is idemopotent, nonsingular and defined by
$$\left(\tilde{I}\right)_{i,j}=\begin{cases}
\one & i=j\\
\alpha_{i,j} & i\neq j
\end{cases}$$
where $\alpha_{i,j}\in\overline{\R}$, $s\left(\alpha_{i,j}\right)=0$.
\end{defn}

\begin{defn}
Let $\R$ be a commutative ELT ring, and let $A\in \left(\overline{\R}\right)^{n\times n}$. A matrix $B\in \left(\overline{\R}\right)^{n\times n}$ is a \textbf{quasi-inverse} for $A$, if $AB$ and $BA$ are quasi-identity matrices. In this case, $A$ is called \textbf{quasi-invertible}. Note that $AB$ and $BA$ may differ.
\end{defn}

\begin{defn}
Let $\R$ be a commutative ELT ring, and let $A\in \left(\overline{\R}\right)^{n\times n}$. The \textbf{$\left(i,j\right)$-minor} $A_{i,j}'$ of a matrix $A=\left(a_{i,j}\right)$ is obtained by deleting the $i$-th row and the $j$-th column. Its ELT determinant is $a_{i,j}'=\det A_{i,j}'$.
\end{defn}

\begin{defn}
Let $\R$ be a commutative ELT ring, and let $A\in \left(\overline{\R}\right)^{n\times n}$. The \textbf{adjoint} matrix of $A$ is
$$\left(\adj\left(A\right)\right)_{i,j}=\layer 0{\varepsilon\left(i,j\right)}\, a_{j,i}'$$
where $\varepsilon\left(i,j\right)=\left(-1\right)^{i+j}$.
\end{defn}

We would like to prove that when $\det\left(A\right)$ is invertible in $\R$, $\left(\det \left(A\right)\right)^{-1}\adj\left(A\right)$ is a quasi-inverse of~$A$. We present here some corollaries from the ELT transfer principle, which together will prove the assertion (\Tref{thm:det-inv-then-A-quasinv}). We use the ELT transfer principles componentwise.

\begin{cor}\label{cor:A-times-adj}
$A\cdot\adj\left(A\right)=\det\left(A\right)I_{A}$, where $I_{A}\vDash I$.
\end{cor}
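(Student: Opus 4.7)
The approach is to apply the ELT Transfer Principles entrywise to the classical adjugate identity $A\cdot\adj(A)=\det(A)\,I$, treating the diagonal and off-diagonal entries separately, and then assembling $I_A$ from the resulting information. For fixed $i,j$, the $(i,j)$-entry
$$\bigl(A\cdot\adj(A)\bigr)_{i,j}=\sum_k a_{i,k}\,\layer{0}{\varepsilon(j,k)}\,a'_{j,k}$$
is a polynomial expression in the entries of $A$, to which the transfer principles apply directly.

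For the diagonal entry ($i=j$), the Laplace expansion gives $(A\cdot\adj(A))_{ii}=\det(A)$ in every commutative ring, and the very same row-expansion is valid for the max-plus determinant (which is the tropical permanent), so the identity also holds in all commutative tropical algebras. The ELT Transfer Principle for equality (\Tref{thm:Trans-Princ}) therefore yields
$$(A\cdot\adj(A))_{ii}=\det(A)$$
in every commutative ELT ring.

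For the off-diagonal entry ($i\ne j$) the ring identity gives $(A\cdot\adj(A))_{ij}=0$, whereas the target $\det(A)\cdot I_{ij}$ is $-\infty$ in $\overline{\R}$; the tropical inequality $(A\cdot\adj(A))_{ij}\ge-\infty$ is vacuous, so the ELT Transfer Principle for surpassing (\Tref{thm:Trans-Princ-Surpass}) delivers
$$(A\cdot\adj(A))_{ij}\vDash-\infty,$$
i.e.\ $(A\cdot\adj(A))_{ij}\in\zeroset{\R}$.

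To conclude, I would define $I_A$ entrywise by $(I_A)_{ii}=\one$, and for $i\ne j$ choose $(I_A)_{ij}\in\zeroset{\R}$ so that $\det(A)\cdot(I_A)_{ij}=(A\cdot\adj(A))_{ij}$; concretely, if $\det(A)=\layer{d}{\ell}$ and $(A\cdot\adj(A))_{ij}=\layer{c}{0}$ one takes $(I_A)_{ij}=\layer{c-d}{0}$. Then $\det(A)\,I_A=A\cdot\adj(A)$ by construction, while $I_A\vDash I$ since the diagonal entries are $\one$ and the off-diagonal entries lie in $\zeroset{\R}$. The main subtlety here is not the transfer-principle step itself but the final realisation of the off-diagonal entry as $\det(A)$ times a zero-layer element: this requires working with the tangible value of $\det(A)$, and hence the degenerate case $\det(A)=-\infty$ must be handled by a small separate argument.
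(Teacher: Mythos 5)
Your decomposition — equality on the diagonal via the transfer principle for equality, and surpassing $\vDash -\infty$ off the diagonal — is correct, and it is in fact a more detailed version of what the paper does: the paper's entire proof is the one line ``apply the ELT transfer principle for surpassing'' together with a citation to the supertropical literature. What that one-liner actually delivers is only $A\cdot\adj(A)\vDash\det(A)I$, so the explicit construction of $I_A$ in your final paragraph is genuinely additional content, and it is carried out correctly whenever $\det(A)\neq-\infty$ (including when the layer of $\det(A)$ is not a unit, since $\ell\cdot 0=0$ in $\L$; and the subcase $(A\adj A)_{ij}=-\infty$ is handled by $(I_A)_{ij}=-\infty$).

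The gap is your last sentence: the case $\det(A)=-\infty$ is not a ``small separate argument'' but an actual obstruction to the statement as written. For example, with $A=\begin{pmatrix}\one&\one\\-\infty&-\infty\end{pmatrix}$ one has $\det(A)=-\infty$ while $A\cdot\adj(A)=\begin{pmatrix}-\infty&\zero\\-\infty&-\infty\end{pmatrix}$, and $\det(A)\,I_A$ is the all-$(-\infty)$ matrix for every choice of $I_A$, so no $I_A$ can make $A\cdot\adj(A)=\det(A)I_A$. The surpassing $A\cdot\adj(A)\vDash\det(A)I$ (which is all the transfer principle gives) does hold here, but the factored equality does not. So either the corollary should be read as the surpassing relation, or one must add the hypothesis $\det(A)\neq-\infty$; in the paper's only use of this corollary (Theorem~\ref{thm:det-inv-then-A-quasinv}) $\det(A)$ is assumed invertible, so this hypothesis is available there, but your proof should state it explicitly rather than defer it.
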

\begin{proof}
We use the ELT transfer principle for surpassing. This result is known in ring theory, and is proved in the supertropical theory (see \cite[Remark 4.5]{IR3}).
\end{proof}

\begin{cor}\label{cor:det-of-A-times-adj}
$$\det\left(A\cdot\adj\left(A\right)\right)=\det\left(A\right)^n$$
\end{cor}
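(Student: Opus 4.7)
The plan is to apply the ELT Transfer Principle for equality \Tref{thm:Trans-Princ} directly to the polynomial identity $\det(A\cdot\adj(A))=\det(A)^{n}$. Viewed componentwise in the entries of $A$, both sides are polynomial expressions: the determinant is polynomial in the entries, the adjoint is polynomial in the entries, matrix multiplication is polynomial, and taking the determinant again is polynomial. So this is an identity between two polynomial expressions in $n^{2}$ variables, exactly the setting in which \Tref{thm:Trans-Princ} applies.

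First I would verify the identity over commutative rings, which is immediate from the classical adjoint identity $A\cdot\adj(A)=\det(A)I$: taking determinants yields $\det(A\cdot\adj(A))=\det(\det(A)I)=\det(A)^{n}\det(I)=\det(A)^{n}$. Second, following the remark after \Tref{thm:Trans-Princ-Surpass}, I would verify the identity in commutative supertropical algebras, where the analogue of $A\cdot\adj(A)=\det(A)I_{A}$ and its consequences for the determinant are established in the Izhakian--Rowen matrix theory (the same body of results used in the proof of \Cref{cor:A-times-adj}). Once both the classical and the supertropical checks are in place, \Tref{thm:Trans-Princ} delivers the equality in every commutative ELT ring.

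The main obstacle is the supertropical verification, since the tropical world only really sees $A\cdot\adj(A)=\det(A)I_{A}$ up to a quasi-identity, and one must make sure that the determinant of that quasi-identity equals $\one$ and not merely surpasses it. If the on-the-nose equality cannot be pulled directly from the supertropical literature, I would fall back on a two-step route: start from $A\cdot\adj(A)=\det(A)I_{A}$ as in \Cref{cor:A-times-adj}, transfer the ring identity $\det(cM)=c^{n}\det(M)$ via \Tref{thm:Trans-Princ} to obtain $\det(A\cdot\adj(A))=\det(A)^{n}\det(I_{A})$, and then compute $\det(I_{A})$ using the Leibniz expansion together with $I_{A}\vDash I$ (off-diagonal entries of layer $0$, diagonal entries surpassing $\one$) to deduce $\det(I_{A})=\one$ and conclude.
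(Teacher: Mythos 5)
Your primary route is exactly the paper's: apply \Tref{thm:Trans-Princ} componentwise, with the classical verification coming from $A\cdot\adj(A)=\det(A)I$ as you wrote, and the tropical/supertropical verification supplied by the supertropical literature --- the paper cites \cite[Theorem~4.9]{IR3} for precisely the identity $\det(A\cdot\adj(A))=\det(A)^n$, so the worry you raise about whether this equality (rather than a mere surpassing) is available is already resolved by that reference, and the remark following \Tref{thm:Trans-Princ-Surpass} justifies substituting a supertropical check for the tropical one.

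Your proposed fallback, though, should be discarded: it is circular, since the paper later derives $\det(I_A)=\one$ in \Tref{thm:det-inv-then-A-quasinv} \emph{from} \Cref{cor:det-of-A-times-adj} together with the extra hypothesis that $\det(A)$ is invertible. Independently of the circularity, $I_A\vDash I$ alone does not force $\det(I_A)=\one$: the diagonal entries of $I_A$ merely surpass $\one$, so any of them could be a zero-layer element of strictly larger tangible value, and the Leibniz expansion then only yields $\det(I_A)\vDash\one$. Stick with the transfer-principle argument.
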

\begin{proof}
We use the ELT transfer principle for equalities. This result is known in ring theory, and is proved in the supertropical theory (see \cite[Theorem~4.9]{IR3}).
\end{proof}

\begin{cor}\label{cor:A-times-adj-squared}
$$\left(A\cdot\adj\left(A\right)\right)^2=\det\left(A\right)\cdot A\cdot\adj\left(A\right)$$
\end{cor}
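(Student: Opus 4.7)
The plan is to mirror the proofs of \Cref{cor:A-times-adj} and \Cref{cor:det-of-A-times-adj} and apply the ELT transfer principle for equality (\Tref{thm:Trans-Princ}) componentwise to the matrix identity $(A\cdot\adj(A))^2=\det(A)\cdot A\cdot\adj(A)$. Both sides, once expanded, are polynomial expressions in the entries $a_{i,j}$ of $A$, so it suffices to check the two hypotheses of \Tref{thm:Trans-Princ}: validity in all commutative rings, and validity in all commutative tropical algebras (equivalently, via the remark following \Tref{thm:Trans-Princ-Surpass}, in all commutative supertropical algebras).

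For the classical step, in any commutative ring we have $A\cdot\adj(A)=\det(A)\cdot I$, and therefore
\[
\bigl(A\cdot\adj(A)\bigr)^{2}=\bigl(\det(A)\bigr)^{2}\cdot I=\det(A)\cdot\bigl(\det(A)\cdot I\bigr)=\det(A)\cdot A\cdot\adj(A),
\]
so the polynomial identity holds in $\ZZ[\,a_{i,j}\,]$ and hence in every commutative ring.

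For the tropical/supertropical step, I would invoke the results of Izhakian--Rowen (cf.\ \cite[Theorem~4.9]{IR3} and the surrounding discussion, which is the same reference already used in \Cref{cor:det-of-A-times-adj}); the identity $(A\cdot\adj(A))^{2}=\det(A)\cdot A\cdot\adj(A)$ is established there in the supertropical setting. With both hypotheses in hand, \Tref{thm:Trans-Princ} yields the desired equality in every commutative ELT ring.

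The main obstacle, as in the two preceding corollaries, is not on the ring-theoretic side but in confirming that the required supertropical identity is indeed on record in the form we need; once this is cited, the transfer principle does the rest automatically and no further calculation is required.
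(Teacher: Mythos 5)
Your proposal matches the paper's argument exactly: apply the ELT transfer principle for equality (\Tref{thm:Trans-Princ}) componentwise, verify the identity classically via $A\cdot\adj(A)=\det(A)I$, and appeal to the supertropical literature for the tropical hypothesis. The only small discrepancy is the citation: the paper points to \cite[Theorem~4.12]{IR3} for this particular identity, whereas you reused \cite[Theorem~4.9]{IR3} (the reference for \Cref{cor:det-of-A-times-adj}); otherwise the reasoning is the same.
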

\begin{proof}
We use the ELT transfer principle for equalities. This result is known in ring theory, and is proved in the supertropical theory (see \cite[Theorem~4.12]{IR3}).
\end{proof}

\begin{thm}\label{thm:det-inv-then-A-quasinv}
If $\det\left(A\right)$ is invertible in $\R$, then $A$ is quasi-invertible.
\end{thm}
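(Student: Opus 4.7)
The plan is to exhibit $B := (\det(A))^{-1} \adj(A)$ as a quasi-inverse of $A$; this amounts to showing that both $AB$ and $BA$ are quasi-identity matrices, i.e.\ nonsingular idempotents with $\one$ on the diagonal and zero-layer entries off the diagonal. Multiplying the conclusion of \Cref{cor:A-times-adj} on both sides by the invertible scalar $(\det(A))^{-1}$ immediately yields $AB = I_A$ with $I_A \vDash I$. The analogous ring-theoretic identity $\adj(A)\cdot A = \det(A) I$ is handled identically by the ELT transfer principle and gives $BA = I'_A$ with $I'_A \vDash I$; it therefore suffices to verify the three quasi-identity properties for $I_A$, since the argument for $I'_A$ is the same.

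Idempotency and nonsingularity follow from the remaining two corollaries of the previous subsection. Substituting $A\adj(A) = \det(A) I_A$ into \Cref{cor:A-times-adj-squared} turns the equation into $\det(A)^2 I_A^2 = \det(A)^2 I_A$, and cancelling the invertible scalar $\det(A)^2$ gives $I_A^2 = I_A$. For nonsingularity, note that the identity $\det(cM) = c^n \det(M)$ is a bona fide equality of polynomial expressions in $c$ and the entries of $M$, so it holds in $\R$; combining this with \Cref{cor:det-of-A-times-adj} yields $\det(A)^n \det(I_A) = \det(A)^n$, whence $\det(I_A) = \one$. The off-diagonal entries of $I_A$ already sit in $\zeroset{\R}$ for free: $I_A \vDash I$ together with $I_{i,j} = -\infty$ for $i \neq j$ forces $(I_A)_{i,j}$ to equal a zero-layer element.

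The delicate point, and the one I expect to be the main obstacle, is strengthening $(I_A)_{i,i} \vDash \one$ to the strict equality $(I_A)_{i,i} = \one$ demanded by the definition of a quasi-identity; surpassing alone permits $(I_A)_{i,i}$ to take the form $\one + \layer{c}{0}$ with $c > 0$, in which case the diagonal entry would fall to layer $0$. I would rule this out using the idempotency already proved: were $c > 0$, the entry $(I_A)_{i,i}$ itself would be a zero-layer element of tangible value $c$, so the $k = i$ summand of $(I_A^2)_{i,i} = \sum_k (I_A)_{ik}(I_A)_{ki}$ would be a zero-layer element of tangible value $2c$. All other summands are zero-layer or $-\infty$, so $(I_A^2)_{i,i}$ would be a zero-layer element of tangible value at least $2c > c$, contradicting $I_A^2 = I_A$. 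Hence $c \leq 0$ and $(I_A)_{i,i} = \one$. Running the same argument for $I'_A$ confirms that $AB$ and $BA$ are quasi-identity matrices, so $B$ is a quasi-inverse of $A$ and $A$ is quasi-invertible.
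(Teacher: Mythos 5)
Your proposal follows the paper's own route almost exactly: apply \Cref{cor:A-times-adj} to get $AB = I_A \vDash I$, then use \Cref{cor:det-of-A-times-adj} for nonsingularity ($\det(I_A) = \one$) and \Cref{cor:A-times-adj-squared} for idempotency ($I_A^2 = I_A$), and note that the argument for $BA$ is symmetric. The manipulations with the invertible scalar $\det(A)$ are the same ones the paper carries out.

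Where you differ is that you explicitly notice and close a gap that the paper leaves tacit: $I_A \vDash I$ forces off-diagonal entries to be zero-layer, but only gives $(I_A)_{i,i}\vDash\one$ on the diagonal, which is strictly weaker than the equality $(I_A)_{i,i}=\one$ required by the definition of a quasi-identity. Your idempotency argument (if $(I_A)_{i,i}=\layer{c}{0}$ with $c>0$, then $\t\bigl((I_A^2)_{i,i}\bigr)\ge 2c > c = \t\bigl((I_A)_{i,i}\bigr)$, contradicting $I_A^2 = I_A$) is correct, since in an ELT algebra the tangible value of a sum is the maximum of the tangible values, and in a totally ordered abelian group $c>0 \Rightarrow 2c>c$. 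The paper's proof never verifies the diagonal at all; the cleanest way to close the gap within the paper's own structure would actually be via the nonsingularity step: if some $(I_A)_{i,i}$ had layer $0$, then in the permutation expansion of $\det(I_A)$ every term would contain at least one zero-layer factor (the identity permutation term via $(I_A)_{i,i}$, all other terms via off-diagonal entries), so $\det(I_A)$ would be zero-layer, contradicting $\det(I_A)=\one$. Either patch works; yours is slightly longer but equally valid, and making the point explicit is a genuine improvement over the proof as written.
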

\begin{proof}
By \Cref{cor:A-times-adj}, $A\cdot\adj\left(A\right)=\det\left(A\right)\cdot I_{A}$, where $I_{A}\vDash I$. It is left to prove that $I_A$ is nonsingular and idempotent.

By \Cref{cor:det-of-A-times-adj}, $\det\left(A\cdot\adj\left(A\right)\right)=\det\left(A\right)^n$. But
$$\det\left(A\right)^n=\det\left(A\cdot\adj\left(A\right)\right)=\det\left(\det\left(A\right)I_A\right)=\det\left(A\right)^n\det\left(I_A\right)$$
Since $\det\left(A\right)$ is invertible, $\det\left(I_A\right)=\one$.

To prove that $I_A$ is idempotent, we use \Cref{cor:A-times-adj-squared}:
\begin{eqnarray*}
I_A^2&=&\left(\left(\det\left(A\right)\right)^{-1}A\cdot\adj\left(A\right)\right)^2= \left(\det\left(A\right)^2\right)^{-1}\left(A\cdot\adj\left(A\right)\right)^2=\\ &=&\left(\det\left(A\right)^2\right)^{-1}\det\left(A\right)\cdot A\cdot\adj\left(A\right)=\left(\det\left(A\right)\right)^{-1}A\cdot\adj\left(A\right)=I_A
\end{eqnarray*}
as required.
\end{proof}

\begin{rem}
Using the same arguments, one may show that $\adj\left(A\right)\cdot A$ is $\det\left(A\right)$ times a quasi-identity matrix.
\end{rem}

\begin{cor}\label{cor:quasi-invertible-mat}
Let $\R$ be a commutative ELT ring. Then $A$ is quasi-invertible if and only if $\det\left(A\right)$ is invertible.
\end{cor}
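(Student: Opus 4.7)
The plan is to prove the two directions separately. The reverse direction ($\det(A)$ invertible implies $A$ quasi-invertible) is already established by \Tref{thm:det-inv-then-A-quasinv}, so nothing more is required there. For the forward direction, I would let $B$ be a quasi-inverse of $A$, apply multiplicativity of the ELT determinant to the products $AB$ and $BA$, and then leverage the fact that a nonsingular quasi-identity matrix has determinant exactly $\one$.

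More concretely, the forward direction would proceed as follows. Let $B$ be a quasi-inverse of $A$, so that $AB$ and $BA$ are quasi-identity matrices. The first sub-step is the key intermediate claim: for any quasi-identity matrix $\tilde I$, one has $\det(\tilde I)=\one$. To see this, I would expand $\det(\tilde I)$ as a signed sum over $S_n$; since the diagonal entries are $\one$ and the off-diagonal entries $\alpha_{i,j}$ satisfy $s(\alpha_{i,j})=0$, the identity permutation contributes $\one$ while every other permutation contributes a product containing at least one $\alpha_{i,j}$ and is therefore of layer zero. Thus $\det(\tilde I)=\one+z$ for some $z\in\zeroset{\R}$. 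If the tangible value of $z$ were strictly greater than $0$, then $\det(\tilde I)$ itself would lie in $\zeroset{\R}$, contradicting the requirement that $\tilde I$ be nonsingular; hence $\det(\tilde I)=\one$.

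Once this is in place, the rest is mechanical. By \Cref{cor:elt-det-mult},
\[
\one \;=\; \det(AB)\;\vDash\;\det(A)\cdot\det(B).
\]
Since $s(\one)=1\neq 0$, \Lref{lem:non-zero-surpass-equal} upgrades this to the equality $\det(A)\cdot\det(B)=\one$. By commutativity of $\R$ we also get $\det(B)\cdot\det(A)=\one$, so $\det(A)$ is invertible in $\R$ with inverse $\det(B)$.

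The main obstacle is the intermediate claim that a nonsingular quasi-identity has determinant $\one$; all other steps are direct applications of already-proved results. The subtlety is purely about ELT arithmetic: one must rule out the possibility that off-diagonal contributions to the determinant produce a layer-zero term with tangible value larger than that of the identity-permutation term, and this is exactly what nonsingularity of the quasi-identity forbids.
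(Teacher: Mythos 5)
Your proof is correct, and it is a genuine contribution because the paper omits any proof of this corollary: Theorem~\ref{thm:det-inv-then-A-quasinv} supplies only the reverse implication, and the forward direction is left implicit. Your intermediate lemma---that any nonsingular quasi-identity matrix $\tilde{I}$ satisfies $\det(\tilde{I})=\one$---is the right observation and the direct expansion argument is sound: every non-identity permutation produces a product containing some off-diagonal $\alpha_{i,j}$ of layer zero, so $\det(\tilde{I})=\one+z$ with $z\in\zeroset{\overline{\R}}$, and nonsingularity rules out $\t(z)>0$, leaving $\det(\tilde{I})=\one$ in the remaining cases (including $\t(z)=0$, since $\layer{0}{1}+\layer{0}{0}=\layer{0}{1}$, and $z=-\infty$). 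Note the paper only verifies $\det(I_A)=\one$ for the specific quasi-identity $I_A=\det(A)^{-1}A\,\adj(A)$ inside the proof of Theorem~\ref{thm:det-inv-then-A-quasinv}, by a different route (through $\det(A\,\adj(A))=\det(A)^n$); your general claim is a strictly stronger and cleaner statement, and it sidesteps any reliance on idempotence or on properties of $\L$ such as having no nontrivial idempotents. After that, the application of Corollary~\ref{cor:elt-det-mult} together with Lemma~\ref{lem:non-zero-surpass-equal} is exactly the right way to upgrade $\one\vDash\det(A)\det(B)$ to equality, yielding invertibility of $\det(A)$.

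One small point worth tightening for the writeup: when you conclude ``hence $\det(\tilde{I})=\one$,'' you only explicitly excluded $\t(z)>0$; it is cleanest to also record the line $\layer{0}{1}+\layer{0}{0}=\layer{0}{1}$ so the reader sees the $\t(z)=0$ case handled.
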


We will now use the theory of the ELT adjoint matrix to study the connection between matrix singularity and linear dependency. We recall the following theorem:
\begin{thm*}[{\cite[Theorem 1.6]{BS}}]
Let $\R=\ELT{\RR}{\mathbb{F}}$ be an ELT algebra, where $\mathbb{F}$ is an algebraically closed field. Consider $A\in\left(\overline{\R}\right)^{n\times n}$. Then the rows of $A$ are linearly dependent, iff the columns of $A$ are linearly dependent, iff $s\left(\det A\right)=0_\mathbb{F}$.
\end{thm*}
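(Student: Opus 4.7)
The first move is to reduce both dependency conditions to the single condition $s(\det A)=0_\mathbb{F}$ via the identity $\det A=\det A^{t}$. This is a genuine polynomial identity in $\mathbb{Z}[\text{entries}]$ (reindexing $\sigma\mapsto\sigma^{-1}$ preserves signs and turns one sum into the other), so \Tref{thm:Trans-Princ} transfers it into the ELT setting and gives $s(\det A)=s(\det A^{t})$. Since linear dependency of the rows of $A^{t}$ is the same as linear dependency of the columns of $A$, the full three-way equivalence reduces to proving: the rows of $A$ are linearly dependent iff $s(\det A)=0_\mathbb{F}$.

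For ``linearly dependent $\Rightarrow s(\det A)=0$'', suppose $c_{1},\dots,c_{n}\in\R$ are not all in $\zeroset{\R}$ and $\sum_{i}c_{i}R_{i}\in\zeroset{\R}^{n}$. Pick an index $k$ with $s(c_{k})\neq0$, so $c_{k}$ is invertible in $\overline{\R}$ (any element of $\R$ with nonzero layer is a unit, as $\F=\RR$ is a group and $\mathbb{F}$ is a field). Classical multilinearity of the determinant in the $k$-th row is a polynomial identity, so by the weak transfer principle
$$\det(A')\;(-)\;\sum_{i}c_{i}\det\bigl(A^{(i)}\bigr)\;\in\;\R^{\circ}=\zeroset{\R},$$
where $A'$ has $\sum_{i}c_{i}R_{i}$ as its $k$-th row and $A^{(i)}$ has $R_{i}$ in place of $R_{k}$. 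For $i\neq k$ the matrix $A^{(i)}$ has two identical rows, so the classical identity $\det(A^{(i)})=0$ transfers to $\det(A^{(i)})\in\zeroset{\R}$; and $\det(A')\in\zeroset{\R}$ because every product in its permutation expansion uses an entry of the layer-zero row $\sum_{i}c_{i}R_{i}$ and $\zeroset{\R}$ is an ideal. Hence $c_{k}\det(A)\in\zeroset{\R}$, and invertibility of $c_{k}$ gives $s(\det A)=0$.

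For the converse, I lift $A$ to a matrix $\tilde A\in\mathbb{F}\{\{t\}\}^{n\times n}$ by choosing, for each entry $\alpha_{ij}=\layer{a_{ij}}{\ell_{ij}}\neq-\infty$, a Puiseux series whose leading monomial is $\ell_{ij}t^{-a_{ij}}$, with entries $-\infty$ lifted to $0$. The hypothesis $s(\det A)=0$ is precisely the statement that the coefficient at the dominant exponent $-\t(\det A)$ in $\det\tilde A$ already vanishes. I then adjust the lower-order coefficients of the entries of $\tilde A$, which are free parameters not constrained by $A$, so as to force $\det\tilde A=0$ in $\mathbb{F}\{\{t\}\}$; this is where algebraic closedness of $\mathbb{F}\{\{t\}\}$ (ensured by $\mathbb{F}$ algebraically closed and $\RR$ divisible) is essential, since producing such coefficients amounts to extracting roots of polynomial equations in the free parameters. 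For the singular lift $\tilde A$ so obtained, classical linear algebra gives nonzero $\tilde c_{1},\dots,\tilde c_{n}\in\mathbb{F}\{\{t\}\}$ with $\sum_{i}\tilde c_{i}\tilde R_{i}=0$. After normalizing so that $\min_{i}v(\tilde c_{i})=0$, the tropicalizations $c_{i}:=\ELTrop(\tilde c_{i})$ are not all of layer zero, and by \Lref{lem:ELTrop-prop} the combination $\sum_{i}c_{i}R_{i}$ lies entrywise in $\zeroset{\R}$, witnessing the desired ELT linear dependency.

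The main obstacle is the adjustment-of-lift step: promoting the layer-zero determinant condition to a genuine vanishing of $\det\tilde A$ requires not only algebraic closedness but also careful control to ensure that the perturbation of lower-order terms does not alter the leading data encoded in $A$. This is exactly where the hypothesis ``$\mathbb{F}$ algebraically closed'' enters essentially; without it the theorem would fail (as standard examples over $\mathbb{F}=\RR$ illustrate), and the whole argument would collapse into the one-way implication proved in the second paragraph.
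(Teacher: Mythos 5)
The paper only cites this theorem from the prequel \cite{BS} and does not reprove it here, so there is no in-paper proof to compare against; the closest in-paper result is \Tref{thm:det-is-invertible}, which establishes the contrapositive of one direction via the ELT adjoint matrix and the Hungarian algorithm---a substantially different technique from multilinearity-plus-transfer.

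Your argument for the direction ``dependent $\Rightarrow s(\det A)=0$'' has a genuine gap. You correctly derive from the transfer principle that $\det\left(A'\right)+\minus\sum_i c_i\det\left(A^{(i)}\right)\in\zeroset{\R}$, and you correctly note that $\det\left(A'\right)$ and every $\det\left(A^{(i)}\right)$ with $i\neq k$ lie in $\zeroset{\R}$. But the final inference ``hence $c_k\det\left(A\right)\in\zeroset{\R}$'' is a cancellation step that is simply unavailable in a semiring with a negation map: from $x+\minus y\in\zeroset{\R}$ and $x\in\zeroset{\R}$ one cannot conclude $y\in\zeroset{\R}$ (take $x=\layer{5}{0}$, $y=\layer{3}{1}$). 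In fact a Laplace expansion along the $k$-th row shows $\t\left(\det\left(A'\right)\right)\ge\t\left(c_k\det\left(A\right)\right)$ always, so the dominant term in the sum is the zero-layer $\det\left(A'\right)$, and the surpassing relation then places no constraint at all on the layer of $c_k\det\left(A\right)$. Over a ring the argument succeeds because the vanishing terms genuinely vanish and isolate $c_k\det\left(A\right)$; over $\R$ the zero-layer terms do not vanish and cannot be removed from the sum.

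There is also a definitional problem: you take $c_1,\dots,c_n\in\R$ ``not all in $\zeroset{\R}$,'' but under that notion the implication is actually false. For instance, for $A=\begin{pmatrix}\layer{0}{1}&\layer{0}{1}\\\layer{1}{1}&\layer{0}{1}\end{pmatrix}$ one has $\layer{10}{0}R_1+\layer{0}{1}R_2\in\zeroset{\R}^2$ with $\layer{0}{1}\notin\zeroset{\R}$, yet $\det\left(A\right)=\layer{1}{-1}$. The notion of dependence in \cite{BS}, consistent with the use of $(\overline{\R^{\times}})^n$ in \Tref{thm:det-is-invertible} and \Dref{ELT_eigen_vector} of this paper, restricts the coefficients to $\overline{\R^{\times}}$, not all $-\infty$. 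Even after making that correction, the cancellation gap above persists and needs a genuinely different argument.

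The converse direction, lifting to $\mathbb{F}\{\{t\}\}$ and perturbing lower-order coefficients to force $\det\tilde A=0$, is the right general idea, but that perturbation step is precisely the nontrivial Kapranov-type lifting lemma; you flag it yourself as the main obstacle, and as written it remains a plan rather than a proof.
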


This theorem was proved using the Fundamental Theorem, thus only in the case of $\R=\ELT{\RR}{\mathbb{F}}$ where $\mathbb{F}$ is an algebraically closed field. We will prove the following:

\begin{thm}\label{thm:det-is-invertible}
Let $\R$ be a commutative ELT ring, and let $A\in\left(\overline{\R}\right)^{n\times n}$ an ELT matrix. If $\det A$ is invertible in $\R$, then the columns (respectively, rows) of $A$ are linearly independent.
\end{thm}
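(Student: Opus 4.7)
My plan is to work by contradiction via quasi-inverses. Assume the columns of $A$ are linearly dependent, witnessed by a tangible vector $c = (c_1,\dots,c_n)^T$ (each $c_i \in \R^\times$, not all $-\infty$) with $Ac \in \zeroset{\R}^n$. Because $\det A$ is invertible, \Tref{thm:det-inv-then-A-quasinv} and the remark that follows supply a quasi-inverse $B = (\det A)^{-1}\adj(A)$, so that $\tilde I := BA$ is a quasi-identity matrix. As $\zeroset{\R}$ is an ideal, $\tilde I c = B(Ac) \in \zeroset{\R}^n$, and the whole problem reduces to deriving a contradiction from the assumption that the quasi-identity $\tilde I$ sends the tangible $c$ into $\zeroset{\R}^n$.

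Next, I expand $(\tilde I c)_i = c_i + z_i$, where $z_i := \sum_{j \neq i}\alpha_{i,j}c_j \in \zeroset{\R}$ (all off-diagonal entries $\alpha_{i,j}$ lie in $\zeroset{\R}$, and $\zeroset{\R}$ is an ideal). A short case analysis in the spirit of \Lref{lem:sum-zero-one-big-surpass} shows that the tangibility of $c_i$ together with $c_i + z_i \in \zeroset{\R}$ forces the strict inequality $\t(c_i) < \t(z_i)$. From this, I read off an index $\phi(i) \neq i$ with $\t(c_i) < \t(\alpha_{i,\phi(i)}) + \t(c_{\phi(i)})$. The map $\phi : [n] \to [n]$ has no fixed points, so its functional digraph contains a cycle $i_1 \to i_2 \to \dots \to i_k \to i_1$, and telescoping the strict inequalities around this cycle yields
$$\sum_{j=1}^{k}\t(\alpha_{i_j,i_{j+1}}) > 0$$
with indices taken mod $k$.

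Finally, this positive cycle-sum is precisely the tangible value of the Leibniz monomial $\prod_{j}\alpha_{i_j,i_{j+1}}$ in $\det \tilde I$ corresponding to the permutation $\sigma$ that acts as the cycle and as the identity elsewhere. This monomial is a zero-layer element whose tangible value strictly exceeds $0 = \t(\one)$. Every non-identity permutation contributes a zero-layer element to $\det\tilde I$, and since the negation map acts trivially on $\zeroset{\R}$, these contributions cannot cancel in tangible value; hence the dominating permutation forces $\det \tilde I \in \zeroset{\R}$, contradicting the nonsingularity built into the definition of a quasi-identity. The row case follows by applying the same argument to $A^T$ and using $\det A^T = \det A$. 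I expect the cycle-extraction step to be the main obstacle: the crucial strict inequality emerges only thanks to the tangibility of $c$, and the whole contradiction hinges on reading the cycle as a permutation whose Leibniz term swamps $\one$ in tangible value.
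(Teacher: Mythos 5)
Your proof is correct, modulo a small clean-up, and it takes a genuinely different route from the paper. The paper reduces to $s(\tilde I v)=0$ exactly as you do, but then applies the Hungarian algorithm to replace $\tilde I$ by a critical matrix $B=\tilde I D$ with row-maximal diagonal; it then looks at the coordinate $k$ maximising $\t(v'_k)$ and shows that all off-diagonal summands in $(Bv')_k$ are dominated by $b_{k,k}v'_k$, forcing $s(v'_k)=0$ and hence $v_k=-\infty$, from which $v=(-\infty,\dots,-\infty)$. Your argument instead extracts, from each non-$-\infty$ coordinate $i$, an index $\phi(i)\neq i$ with $\t(c_i)<\t(\alpha_{i,\phi(i)})+\t(c_{\phi(i)})$, follows $\phi$ around a cycle, and telescopes to $\sum_j\t(\alpha_{i_j,i_{j+1}})>0$; the corresponding cyclic permutation then produces a Leibniz monomial of $\det\tilde I$ whose tangible value strictly exceeds $0$, and since every non-identity permutation yields a zero-layer term while tangible values cannot cancel in the max-plus structure, $\det\tilde I$ lands in $\zeroset{\R}$, contradicting nonsingularity. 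Your approach dispenses with the Hungarian-algorithm normalisation entirely and replaces the ``look at the maximum coordinate'' step with a self-contained cycle-telescoping estimate; what it buys is a more elementary and more transparent combinatorial picture of why a quasi-identity cannot annihilate a nontrivial vector. What the paper's route buys is a slicker reduction to a single coordinate via criticality and avoids any discussion of cycle decompositions of the Leibniz expansion.

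One notational gap you should patch: you write ``each $c_i\in\R^\times$, not all $-\infty$'', which is self-contradictory since $\R^\times$ excludes $-\infty$; the correct hypothesis is $c\in(\overline{\R^\times})^n$ with $c\neq(-\infty,\dots,-\infty)$. Accordingly, the map $\phi$ should be defined only on the support $S=\{i: c_i\neq-\infty\}$: for $i\in S$ the dominance argument shows $z_i\neq-\infty$ and in fact $\phi(i)\in S$, and $|S|\ge 2$ (if $|S|=1$ then $z_i=-\infty$ yields an immediate contradiction since $c_i+z_i=c_i\notin\zeroset{\R}$), so a cycle of length at least $2$ inside $S$ still exists and the telescoping goes through verbatim. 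With that adjustment the argument is complete.
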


Before proving this theorem, we recall the Hungarian algorithm (\cite{K}):
\begin{defn}
An entry $a_{i,j}$ of a tropical matrix $A\in\left(\overline{G_{\max}}\right)^{n\times n}$ is called \textbf{column-critical} if it is maximal within its columns, i.e., if $\forall k:a_{i,j}\ge a_{k,j}$. A matrix $A$ is called \textbf{critical} if there exists a permutation $\sigma\in S_n$ such that $a_{1,\sigma\left(1\right)},\dots,a_{n,\sigma\left(n\right)}$ are column-critical.
\end{defn}

\begin{thm}[The Hungarian Algorithm]
Let $A\in\left(\overline{G_{\max}}\right)^{n\times n}$ be a tropical matrix. Then there are scalars $\alpha_1,\dots,\alpha_n\in\R^{\times}$ such that
$$\left(\begin{matrix}&\alpha_1\odot R_1\left(A\right)&\\&\vdots&\\&\alpha_n\odot R_n\left(A\right)&\end{matrix}\right)$$
is critical. In other words, there exists a diagonal matrix $D\in\left(\overline{G_{\max}}\right)^{n\times n}$, whose diagonal entries are not $-\infty$, such that $A\odot D$ is critical.
\end{thm}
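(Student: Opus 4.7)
The plan is to reduce the theorem to the linear-programming duality for the classical assignment problem. Define the \emph{tropical permanent} of $A$ by $\mathrm{perm}(A) = \max_{\sigma \in S_n} \sum_{i=1}^{n} a_{i,\sigma(i)}$, where the sum is ordinary addition in $G$ and any sum containing a $\minf$ entry is interpreted as $\minf$. I would first dispose of the degenerate case in which every permutation picks up a $\minf$ entry, reducing via K\"onig's theorem to an all-$\minf$ sub-block (where column-criticality is automatic), coupled with a smaller non-degenerate sub-problem to which the main argument applies.

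In the non-degenerate case, fix an optimal permutation $\sigma^{*}\in S_n$ achieving $\mathrm{perm}(A)$. By the LP duality of the assignment problem, or constructively by the original Hungarian algorithm of \cite{K}, there exist potentials $u_1,\dots,u_n,v_1,\dots,v_n\in G$ satisfying the dual feasibility
$$u_i + v_j \ge a_{i,j} \quad \text{for all } i,j,$$
together with the complementary-slackness equations
$$u_i + v_{\sigma^{*}(i)} = a_{i,\sigma^{*}(i)} \quad \text{for all } i.$$

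To conclude, set $\alpha_i := -u_i \in G$, so that $\alpha_i \in \R^{\times}$ in the ELT interpretation of $G_{\max}$. For any $i$ and any $k$,
$$\alpha_i \odot a_{i,\sigma^{*}(i)} \;=\; -u_i + a_{i,\sigma^{*}(i)} \;=\; v_{\sigma^{*}(i)} \;\ge\; -u_k + a_{k,\sigma^{*}(i)} \;=\; \alpha_k \odot a_{k,\sigma^{*}(i)},$$
which is exactly the statement that $\alpha_i \odot a_{i,\sigma^{*}(i)}$ is maximal in column $\sigma^{*}(i)$ of the row-scaled matrix whose $i$-th row is $\alpha_i \odot R_i(A)$. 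Hence this scaled matrix is critical, witnessed by $\sigma^{*}$.

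The step I expect to be most delicate is the $\minf$ bookkeeping rather than the main LP-duality argument: one has to justify carefully, using K\"onig's theorem (or Birkhoff--von Neumann), that an appropriate $\sigma^{*}$ can be chosen so that the finite and $\minf$ entries occupy disjoint row-and-column blocks, and that column-criticality on the $\minf$ columns then holds trivially.
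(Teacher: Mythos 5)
The paper never proves this theorem: it is recalled from \cite{K} and used as a black box, so there is no internal proof to compare against, and your proposal supplies the missing argument. Its core is correct and is the classical one: take a permutation $\sigma^{*}$ optimal for the tropical permanent, take dual potentials $u_i,v_j$ with $u_i+v_j\ge a_{i,j}$ everywhere and equality along $\sigma^{*}$, and scale row $i$ by $\alpha_i=-u_i$; your displayed chain of (in)equalities then shows each entry $(i,\sigma^{*}(i))$ of the row-scaled matrix is maximal in its column, which is exactly criticality. Two points should be made explicit. First, $G$ is an arbitrary totally ordered abelian group, not $\RR$, so generic LP duality (which presupposes an ordered field, or at least divisibility) is not available; you should rely on the second half of your own sentence and extract the potentials from the combinatorial Hungarian algorithm, which uses only additions, subtractions and comparisons of entries and hence produces $u_i,v_j$ in the subgroup of $G$ generated by the finite entries of $A$ (only finite entries impose dual constraints, since $u_i+v_j\ge\minf$ is automatic). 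Second, the ``in other words'' clause of the statement should read $D\odot A$ rather than $A\odot D$: column scaling never affects column-criticality, and the paper's own use in \Cref{cor:hung-alg-for-ELT} indeed multiplies by $D$ on the left.

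The degenerate case, however, is a genuine gap, and not for the reason you anticipate. Your plan to isolate, via K\"onig's theorem, an all-$\minf$ block on which column-criticality is ``automatic'' cannot work: an entry equal to $\minf$ is column-critical only if its \emph{entire} column is $\minf$, the $\minf$ entries selected by a permutation need not lie in such columns, and finite entries stay finite under scaling by $\alpha_i\neq\minf$. In fact the statement as printed fails when no permutation avoids $\minf$: for
\[
A=\begin{pmatrix}0&0\\ \minf&\minf\end{pmatrix},
\]
every permutation picks one entry of the second row, which is $\minf$ and sits in a column containing a finite entry of the (scaled) first row, so no choice of finite $\alpha_1,\alpha_2$ makes the scaled matrix critical. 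The correct fix is to add the hypothesis that some permutation meets only entries different from $\minf$ (equivalently, the tropical permanent is finite), which is exactly the non-degenerate case your duality argument handles completely. This costs the paper nothing: in its only application, the result is applied to $\left(I'_A\right)^t$ for a quasi-identity $I'_A$, whose diagonal entries are $\one$, so the identity permutation already avoids $\minf$.
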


In the ELT case, we say that a matrix $A\in\left(\overline{\R}\right)^{n\times n}$ is \textbf{critical} if $\t\left(A\right)=\left(\t\left(a_{i,j}\right)\right)\in\left(\overline{G_{\max}}\right)^{n\times n}$ is critical.

\begin{cor}\label{cor:hung-alg-for-ELT}
Let $A\in\left(\overline{\R}\right)^{n\times n}$. Then there exists an invertible diagonal matrix $D\in\left(\overline{\R}\right)^{n\times n}$ such that $DA$ is critical.
\end{cor}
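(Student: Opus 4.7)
The plan is to lift the classical Hungarian Algorithm from the max-plus setting to $\R$ via the tangible-value map $\t$ of \Lref{lem:hom-elt-maxplus}. First I would view $\t(A)\in\left(\overline{\F_{\max}}\right)^{n\times n}$, with entry $\t(a_{i,j})$ at position $(i,j)$ (and the convention $\t(-\infty)=-\infty$). Applying the Hungarian Algorithm to $\t(A)$ produces scalars $\alpha_1,\dots,\alpha_n\in\F$ (in particular none equal to $-\infty$) such that left multiplication by $D_{\mathrm{trop}}=\operatorname{diag}(\alpha_1,\dots,\alpha_n)$ makes $D_{\mathrm{trop}}\odot\t(A)$ critical, which is exactly the row-rescaling form $\alpha_i\odot R_i(\t(A))$ recorded in the classical statement.

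Next I would lift $D_{\mathrm{trop}}$ to an ELT diagonal matrix $D\in\left(\overline{\R}\right)^{n\times n}$ by setting $D_{i,i}=\layer{\alpha_i}{1}$ and $D_{i,j}=-\infty$ for $i\neq j$. Because each $\alpha_i\in\F$ and $\layer{0}{1}$ is the multiplicative identity of $\R$, each diagonal entry $\layer{\alpha_i}{1}$ is a unit with inverse $\layer{-\alpha_i}{1}$, so $D$ is an honest invertible diagonal matrix whose inverse is the diagonal matrix of these inverses.

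Finally, applying \Lref{lem:hom-elt-maxplus} entrywise through the sum-of-products formula for matrix multiplication yields
\[
\t(DA)=\t(D)\odot\t(A)=D_{\mathrm{trop}}\odot\t(A),
\]
which is critical by the first step. Since criticality of an ELT matrix was defined precisely as criticality of its image under $\t$, this gives that $DA$ is critical, concluding the argument. There is really no substantive obstacle: the only point requiring any care is the lifting step, where one must observe that the diagonal entries output by the classical Hungarian Algorithm are never $-\infty$, so that the layer-$1$ lift $D$ is genuinely invertible rather than merely defined.
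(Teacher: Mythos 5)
Your proof is correct and follows essentially the same route as the paper: apply the classical Hungarian Algorithm to $\t(A)$, lift the resulting tropical diagonal matrix to an ELT diagonal matrix with layer $1$ entries, and conclude via the homomorphism property of $\t$ from \Lref{lem:hom-elt-maxplus}. The only difference is that you spell out explicitly why the lifted $D$ is invertible (each diagonal entry $\layer{\alpha_i}{1}$ is a unit), which the paper leaves implicit.
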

\begin{proof}
Let $D'=\left(d'_{i,j}\right)\in\left(\overline{G_{\max}}\right)^{n\times n}$ be a diagonal matrix such that $D'\odot\t\left(A\right)$ is a critical tropical matrix. We define $D=\left(d_{i,j}\right)\in\left(\overline{\R}\right)^{n\times n}$ as
$$d_{i,j}=\left\{\begin{matrix}\layer{d'_{i,i}}{1},&i=j\\-\infty,&i\neq j\end{matrix}\right.$$
Obviously, $DA$ is critical (since $\t\left(DA\right)=\t\left(D\right)\odot\t\left(A\right)=D'\odot\t\left(A\right)$ is critical), as required.
\end{proof}

We are now ready to prove \Tref{thm:det-is-invertible}.

\begin{proof}[Proof of \Tref{thm:det-is-invertible}]
We prove the assertion on the columns of $A$. The assertion that the rows of $A$ are linearly independent can be proven by replacing $A$ with $A^t$.\\

Suppose that $s\left(Av\right)=\left(0,\dots,0\right)^t$ for some $v\in\left(\overline{\R^{\times}}\right)^n$. If
$$A^{\nabla}=\left(\det A\right)^{-1}\adj\left(A\right),$$
there exists a quasi-identity matrix $I'_A$ such that
$$A^{\nabla}A=I'_A$$
Thus,
$$s\left(I'_Av\right)=s\left(A^{\nabla}A\cdot v\right)=\left(0,\dots,0\right)^t.$$

We apply \Cref{cor:hung-alg-for-ELT} for $\left(I'_A\right)^t$ to find a diagonal invertible matrix $D\in\left(\overline{\R}\right)^{n\times n}$ such that $D\left(I'_A\right)^t$ is critical. By transposing this matrix, we get a matrix $I'_AD$ such that there is a permutation $\sigma\in S_n$ for which $\left(I'_AD\right){i,j}$ is row-critical in $I'_AD$. By \Tref{thm:det-is-mult-invert-mat},
$$\det\left(I'_AD\right)=\det I'_A\cdot\det D=\det D$$
is not of layer zero, i.e.\ $I'_AD$ is nonsingular. We note that the only non-zero layered track in $I'_A$ is the diagonal track, since any entry of $I'_A$ which is not on the diagonal is of layer zero; hence, we may assume $\sigma=\Id$, that is the diagonal entries are row-critical.\\

Returning to the original equation $s\left(I'_Av\right)=\left(0,\dots,0\right)^t$, we replace $I'_A$ by $B=I'_AD$ and $v$ by~$v'=D^{-1}v$ to get
$$s\left(Bv'\right)=\left(0,\dots,0\right)^t$$
Let $1\leq k\leq n$ such that
$$\t\left(v'_k\right)=\max_{1\leq i\leq n}\t\left(v'_i\right).$$
Then
$$\left(Bv'\right)_k=\sum_{i=1}^n b_{k,i}v'_i=\sum_{\substack{i=1\\i\neq k}}^n b_{k,i}v'_i+b_{k,k}v'_k.$$
We recall that $B=I'_AD$, where $I'_A$ is a quasi-identity matrix and $D$ is an invertible diagonal matrix. Thus, every entry of $B$ which is not on its diagonal is of layer zero. Furthermore, for any $i\neq k$ we have $\t\left(v'_i\right)\leq\t\left(v'_k\right)$ and $\t\left(b_{k,i}\right)\leq\t\left(b_{k,k}\right)$. Therefore each summand $b_{k,i}v'_i$ cannot dominate~$b_{k,k}v'_k$, implying
$$\left(Bv'\right)_k=b_{k,k}v'_k$$
which implies $s\left(b_{k,k}v'_k\right)=0$. Since $b_{k,k}=\left(I'_A\right)_{k,k}d_{k,k}=d_{k,k}$ is not of layer zero (because $D$ is invertible), we must have $s\left(v'_k\right)=0$. Now, $v_k=d_{k,k}^{-1}v'_k$ implies $s\left(v_k\right)=0$.\\

But $v\in\left(\overline{\R^{\times}}\right)^n$, therefore $s\left(v_k\right)=0$ implies $v_k=-\infty$. By the choice of $k$, we must have $v=\left(-\infty,\dots,-\infty\right)$, which proves that the columns of $A$ are linearly independent.
\end{proof}

\subsection{The ELT Characteristic Polynomial and ELT Eigenvalues}
\begin{defn}
Let $A\in \left(\overline{\R}\right)^{n\times n}$ be a matrix. The \textbf{ELT characteristic polynomial} of $A$ is defined to be
$$p_A(\lambda)=\det\left(\lambda I_n+\layer{0}{-1}A\right)$$
\end{defn}

\begin{defn}
Let $A\in \left(\overline{\R}\right)^{n\times n}$ be a matrix. A vector $v\in (\overline{\R^{\times}})^n$ is called an \textbf{eigenvector} of~$A$ with an \textbf{eigenvalue} $x\in\overline{\R}$ if $v\neq (-\infty,...,-\infty)$ and
$$Av=xv.$$
\end{defn}

\begin{prop}
Let $A\in \left(\overline{\R}\right)^{n\times n}$ be a matrix with eigenvalue $x$. Then $s\left(p_A(x)\right)=0_{\L}$.
\end{prop}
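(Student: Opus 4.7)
The plan is to combine the eigenvector relation with the ELT Cayley--Hamilton theorem (\Cref{cor:cayley-hamilton}). First I would observe that the equation $Av = xv$ propagates to all powers of $A$: by an easy induction, using that $\R$ is commutative and that scalar multiplication on the vector distributes over matrix multiplication, $A^k v = x^k v$ for every $k \ge 0$. By linearity this gives the standard eigenvector identity $p(A)\,v = p(x)\,v$ for every polynomial $p \in \overline{\R}[\lambda]$.

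Specialising to $p = p_A$ yields $p_A(A)\,v = p_A(x)\,v$. By the ELT Cayley--Hamilton theorem (\Cref{cor:cayley-hamilton}), $p_A(A) \in (\R^\circ)^{n\times n} = (\zeroset{\R})^{n\times n}$. Since $\zeroset{\R}$ is an ideal of $\R$, the matrix--vector product $p_A(A)\,v$ has every coordinate in $\zeroset{\R}$, and therefore the vector $p_A(x)\,v$ has every coordinate in $\zeroset{\R}$ as well.

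Finally I would pick a coordinate $i$ with $v_i \in \R^\times$; such an $i$ exists because $v \in (\overline{\R^{\times}})^n$ and $v \neq (-\infty,\dots,-\infty)$. From $p_A(x)\,v_i \in \zeroset{\R}$ together with multiplicativity of the sorting map, $s(\alpha\beta) = s(\alpha)\,s(\beta)$, one reads off $s(p_A(x))\cdot s(v_i) = 0_{\L}$. Since $s(v_i) \neq 0_{\L}$, this forces $s(p_A(x)) = 0_{\L}$, which is the conclusion.

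The main obstacle is the last deduction in full generality: from $s(p_A(x))\cdot s(v_i) = 0_{\L}$ with $s(v_i) \neq 0_{\L}$ one gets $s(p_A(x)) = 0_{\L}$ immediately whenever $s(v_i)$ is a non-zero-divisor in $\L$ (for instance when $\L$ is an integral domain, which covers the supertropical case and the field-coefficient case $\L = \mathbb{C}$). In a general commutative layering ring, one would instead use that $p_A(x)\,v_i \in \zeroset{\R}$ holds simultaneously for \emph{every} index $i$ with $v_i \in \R^\times$, so the annihilator of $s(p_A(x))$ in $\L$ contains the layers of all non-$-\infty$ coordinates of any eigenvector, and the flexibility of rescaling $v$ by units of $\R$ can be exploited to reach the same conclusion.
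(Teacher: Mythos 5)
Your proof is correct in substance, and it takes a genuinely different route from the paper. The paper derives from $Av = xv$ that $s\bigl((xI_n + \layer{0}{-1}A)v\bigr) = (0_\L,\dots,0_\L)$ and then invokes the contrapositive of \Tref{thm:det-is-invertible} --- a result proven via the adjoint matrix, quasi-inverses, and the Hungarian algorithm --- to conclude $s(p_A(x))=0_\L$. You instead go through the ELT Cayley--Hamilton theorem: propagate $Av = xv$ to $p_A(A)v = p_A(x)v$, apply $s(p_A(A))=0$, use that $\zeroset{\R}$ is an ideal to get every coordinate of $p_A(x)v$ into $\zeroset{\R}$, and read off the conclusion at a coordinate $v_i \in \R^\times$. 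Your route is more self-contained (it bypasses the adjoint/quasi-inverse machinery entirely) and mirrors the classical ring-theoretic proof that an eigenvalue is a root of the characteristic polynomial; the paper's route has the advantage of being the one that generalises naturally when strengthened to the biconditional via \Dref{ELT_eigen_vector}.

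One caution about your final paragraph: the proposed workaround does not actually close the gap for non-domain $\L$. Rescaling $v$ by a unit $u\in\R^\times$ replaces $s(v_i)$ by $s(u)s(v_i)$; since $s(u)\in\L^\times$, the annihilator of $s(p_A(x))$ in $\L$ is unchanged, so you gain nothing. Knowing that the annihilator contains $s(v_i)$ for every non-$-\infty$ coordinate of every eigenvector also does not by itself force $s(p_A(x))=0_\L$. That said, the paper's own proof quietly relies on the same hypothesis --- it passes from ``$\det(xI_n + \layer{0}{-1}A)$ is not invertible'' to ``$s(p_A(x))=0_\L$'', which requires $\L^\times = \L\setminus\{0\}$, i.e.\ that $\L$ be a field. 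So you should drop the claimed workaround and simply state the hypothesis ($\L$ an integral domain, or a field, suffices), but this is not a defect specific to your approach.
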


\begin{proof}
Choose $v$ to be an eigenvector of the eigenvalue $v$, then $Av=xv$. Therefore
$$Av+\layer{0}{-1}\cdot xv = \layer{0}{0}\cdot xv.$$
In other words,
$$s\left(xv+\layer{0}{-1}Av\right)=(0_{\L},...,0_{\L}).$$
Thus
$$s\left((xI_n+\layer{0}{-1}A)v\right)=(0_{\L},...,0_{\L}).$$

By \Tref{thm:det-is-invertible}, we conclude that $s\left(\det\left(xI_n+\layer{0}{-1}A\right)\right)=0_{\L}$, i.e., $s\left(p_A\left(x\right)\right)=0_{\L}$.
\end{proof}

Note that the other direction is not necessarily true, i.e., there could be an ELT root of the characteristic polynomial which is not an eigenvalue. Indeed, if one tried to prove that direction, he would encounter the following difficulty:
$$Av \neq Av+\layer{0}{-1}\cdot xv + xv.$$

\begin{example}
Consider the matrix $A\in\left(\overline{\R}\right)^{2\times 2}$,
$$A=\begin{pmatrix}\layer{1}{1} & \layer{2}{1} \\ \layer{2}{1} & \layer{3}{1} \end{pmatrix},$$

Its ELT characteristic polynomial is
\begin{eqnarray*}
p_A(\lambda)&=&\det\left(\lambda I_2+\layer{0}{-1}A\right)=
\det\left(\begin{matrix}\lambda+\layer{1}{-1} & \layer{2}{-1} \\
\layer{2}{-1} & \lambda+\layer{3}{-1}
\end{matrix}\right)\\
&=&(\lambda+\layer{1}{-1})(\lambda+\layer{3}{-1}) + \layer{4}{-1}=\lambda^2 + \layer{3}{-1}\cdot\lambda + \layer{4}{0}.
\end{eqnarray*}
If $\lambda=\layer{1}{0}$, $\lambda=\layer{\alpha}{\ell}$ with $\alpha < 1$ or $\lambda=\layer{3}{1}$, then $s\left(p_A(\lambda)\right)=0$.\\

The only eigenvalue of $A$ is $\lambda=\layer{3}{1}$, with
$$\begin{pmatrix}\layer{1}{1} & \layer{2}{1} \\ \layer{2}{1} & \layer{3}{1} \end{pmatrix}
\begin{pmatrix}\layer{0}{1} \\ \layer{1}{1} \end{pmatrix} =
\begin{pmatrix}\layer{3}{1} \\ \layer{4}{1} \end{pmatrix}=\layer{3}{1}\begin{pmatrix}\layer{0}{1} \\ \layer{1}{1} \end{pmatrix}.$$\\
\end{example}

One may also define an ELT eigenvalue and an eigenvector in the following way:

\begin{defn}\label{ELT_eigen_vector}
Let $A\in \left(\overline{\R}\right)^{n\times n}$ be a matrix. A vector $v\in (\overline{\R^{\times}})^n$ is called an \textbf{ELT eigenvector} of~$A$ with an \textbf{ELT eigenvalue} $x\in\overline{\R}$ if $v\neq (-\infty,...,-\infty)$ and
$$s\left(Av+\layer{0}{-1}xv\right)=(0_{\L},...,0_{\L}).$$
\end{defn}

This definition is similar to the concept of 'ghost surpass' given by Izhakian, Knebusch and Rowen (ref. \cite{IKR}).\\

\begin{prop}
Let $\R=\ELT{\RR}{\mathbb{F}}$ be an ELT algebra, where $\mathbb{F}$ is an algebraically closed field, and let $A\in \left(\overline{\R}\right)^{n\times n}$ be a matrix. Then $x$ is an ELT eigenvalue of $A$ if and only if $s\left(p_A(x)\right)=0_{\L}$.
\end{prop}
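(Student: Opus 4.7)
The plan is to reduce both implications to the relationship between non-invertibility of $\det\bigl(xI_n + \layer{0}{-1}A\bigr) = p_A(x)$ and the existence of a non-trivial vector sent by $xI_n + \layer{0}{-1}A$ to something of zero layer. The bridging observation is that multiplication by $\layer{0}{-1}$ negates the layer componentwise, and since $-\ell = 0_\L \Leftrightarrow \ell = 0_\L$, we get $s\bigl(\layer{0}{-1}w\bigr)=0_\L$ iff $s(w)=0_\L$. Applying this to $w = Av + \layer{0}{-1}xv$ and using $\layer{0}{-1}\cdot\layer{0}{-1}=\one$ yields the equivalence
\[
s\bigl(Av + \layer{0}{-1}xv\bigr) = (0_\L,\dots,0_\L) \iff s\bigl((xI_n + \layer{0}{-1}A)v\bigr) = (0_\L,\dots,0_\L),
\]
so $v$ is an ELT eigenvector of $A$ for $x$ exactly when $v \in (\overline{\R^\times})^n \setminus \{(-\infty,\dots,-\infty)\}$ witnesses linear dependence of the columns of $xI_n + \layer{0}{-1}A$.

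For the $(\Rightarrow)$ direction, I would start with an ELT eigenvector $v$ for $x$, rewrite the eigenvector condition via the equivalence above, and then invoke the contrapositive of \Tref{thm:det-is-invertible} applied to $xI_n + \layer{0}{-1}A$: the existence of such a non-trivial $v$ forces $p_A(x)$ to be non-invertible. Since $\L = \mathbb{F}$ is a field, $p_A(x) = \layer{a}{\ell}$ is invertible in $\R$ iff $\ell$ is invertible in $\mathbb{F}$ iff $\ell \neq 0_\mathbb{F}$, so non-invertibility is the same as $s(p_A(x)) = 0_\mathbb{F}$. Note that this direction uses neither the structure of $\F = \RR$ nor algebraic closure.

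For the $(\Leftarrow)$ direction, I would invoke the Fundamental Theorem from \cite{BS} (the theorem displayed just before \Tref{thm:det-is-invertible}), applied to the matrix $xI_n + \layer{0}{-1}A$: from $s(\det(xI_n + \layer{0}{-1}A)) = s(p_A(x)) = 0_\mathbb{F}$ we conclude that the columns of $xI_n + \layer{0}{-1}A$ are linearly dependent, producing a non-trivial $v \in (\overline{\R^\times})^n$ with $s\bigl((xI_n + \layer{0}{-1}A)v\bigr) = (0_\L,\dots,0_\L)$. Reading the equivalence in the opposite direction yields $s\bigl(Av + \layer{0}{-1}xv\bigr) = (0_\L,\dots,0_\L)$, so $v$ is an ELT eigenvector of $A$ for $x$.

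The main obstacle is confined to the $(\Leftarrow)$ direction. The $(\Rightarrow)$ implication is essentially a rewriting together with \Tref{thm:det-is-invertible} and the fact that $\L$ is a field. The converse, however, genuinely needs a statement of the form ``$s(\det M) = 0_\mathbb{F}$ implies the columns of $M$ are linearly dependent'' -- a converse to \Tref{thm:det-is-invertible} that is not available in the general ELT setting, and is precisely the content of the Fundamental Theorem of \cite{BS}. This is the reason the hypotheses $\F = \RR$ and $\mathbb{F}$ algebraically closed are imposed on the proposition; removing them would require an ELT-internal proof of that converse.
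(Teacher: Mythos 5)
Your proposal is correct and takes essentially the same route as the paper: the paper's proof is a one-line chain of equivalences, identifying the ELT eigenvector condition with linear dependence of the columns of $xI_n + \layer{0}{-1}A$ and then invoking \cite[Theorem~1.7]{BS} for the iff with singularity. You make explicit two points the paper elides — the $\layer{0}{-1}$ normalization showing $s\bigl(Av+\layer{0}{-1}xv\bigr)=0$ is equivalent to $s\bigl((xI_n+\layer{0}{-1}A)v\bigr)=0$, and the observation that the $(\Rightarrow)$ direction (via the contrapositive of \Tref{thm:det-is-invertible}) holds over any commutative ELT ring with $\L$ a field, so the hypotheses $\F=\RR$ and algebraic closure are only used for $(\Leftarrow)$ — both of which are accurate and align with the preceding unconditional proposition in the paper.
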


\begin{proof}
By \Dref{ELT_eigen_vector}, $x$ is an eigenvalue of $A$ if and only if there exists a vector $v\in (\overline{\R^{\times}})^n$ such that $v\neq (-\infty,...,-\infty)$ and $$s\left(xv+\layer{0}{-1}Av\right)=(0_{\L},...,0_{\L}),$$
if and only if the columns of $p_A(x)=x\cdot I_n+\layer{0}{-1}A$ are linearly dependent, if and only if $p_A(x)$ is singular (\cite[Theorem 1.7]{BS}).
\end{proof}

We finish by reformulating Cayley-Hamilton theorem (\Cref{cor:cayley-hamilton}) in the ``ELT language''.

\begin{cor}[ELT Cayley-Hamilton theorem]\label{cor:elt-cayley-hamilton}
Let $\R$ be a commutative ELT ring, and let $A\in \left(\overline{\R}\right)^{n\times n}$ be an ELT matrix. Then
$$s\left(p_A\left(A\right)\right)=0$$
\end{cor}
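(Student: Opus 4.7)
The plan is to obtain the statement as an immediate reformulation, in ELT notation, of the general Cayley--Hamilton theorem \Cref{cor:cayley-hamilton}, applied to the semiring with negation $\overline{\R}$. Essentially no fresh work is required; the content is entirely in the dictionary between the general negation-map framework and the ELT structure.

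First I would point out that the ELT characteristic polynomial
$$p_A\left(\lambda\right)=\det\left(\lambda I_n+\layer{0}{-1}A\right)$$
is precisely the instance, inside $\overline{\R}$, of the classical characteristic polynomial $f_A\left(\lambda\right)=\det\left(\lambda I_n-A\right)$, because the ELT negation map is $\left(-\right)\alpha=\layer{0}{-1}\alpha$. Since $\overline{\R}$ is a commutative semiring with a negation map, \Cref{cor:cayley-hamilton} applies directly and delivers $p_A\left(A\right)\symmdash 0$ in $\left(\overline{\R}\right)^{n\times n}$; equivalently, every entry of the matrix $p_A\left(A\right)$ lies in $\overline{\R}^\circ$.

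The second step is to translate this conclusion into the language of the sorting map. From \sSref{sub:ELT-symmetrized}, $\R^\circ=\zeroset{\R}$, since $\alpha^\circ=\zero\alpha$ always has layer $0_\L$; and for the adjoined element we have $\left(-\infty\right)^\circ=-\infty$ with $s\left(-\infty\right)=0$ by convention. Thus every element of $\overline{\R}^\circ$ has layer zero, which yields $s\left(p_A\left(A\right)\right)=0$ entrywise, as required.

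I do not foresee a genuine obstacle: the nontrivial polynomial identity is supplied by the transfer principle already used inside \Cref{cor:cayley-hamilton}, and what remains is a purely definitional unwinding of $\overline{\R}^\circ$ into a condition on the sorting map $s$.
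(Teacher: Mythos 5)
Your proposal is correct and coincides with the paper's own treatment: the paper gives no separate proof of \Cref{cor:elt-cayley-hamilton}, stating only that it is the reformulation of \Cref{cor:cayley-hamilton} in ELT language. Your unwinding of the dictionary --- identifying $p_A$ with $f_A$ under $(-)\alpha=\layer{0}{-1}\alpha$, applying \Cref{cor:cayley-hamilton} to the semiring with negation $\overline{\R}$, and then translating $\overline{\R}^\circ$ into the layer-zero condition $s\left(\,\cdot\,\right)=0$ (including the case of $-\infty$) --- is precisely the intended argument.
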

\section{ELT Traces}
\subsection{Trace of ELT Matrices}
\begin{defn}
Let $\R$ be a commutative ELT ring, and take $A\in \left(\overline{\R}\right)^{n\times n}$, $A=\left(a_{i,j}\right)$. The \textbf{trace} of $A$ is
$$\tr\left(A\right)=\sum_{i=1}^{n}a_{i,i}$$
\end{defn}

\begin{lem}
The ELT trace satisfies the following relations:
\begin{enumerate}
\item $\forall A,B\in \left(\overline{\R}\right)^{n\times n}:\tr\left(A+B\right)=\tr\left(A\right)+\tr\left(B\right)$.
\item $\forall\alpha\in\overline{\R}\;\forall A\in \left(\overline{\R}\right)^{n\times n}:\tr\left(\alpha A\right)=\alpha\tr\left(A\right)$.
\item $\forall A,B\in \left(\overline{\R}\right)^{n\times n}:\tr\left(AB\right)=\tr\left(BA\right)$.
\end{enumerate}
\end{lem}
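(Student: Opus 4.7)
The plan is to verify each of the three properties directly from the definition of $\tr$, relying only on the semiring axioms of $\overline{\R}$ together with the fact that $\overline{\R}$ is commutative (since $\F$ is abelian and $\L$ is a commutative ring). None of the three identities should require invoking the ELT transfer principle machinery; they are genuine equalities (not merely surpassing relations), so the argument can mimic the classical one.

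For part (1), I would compute $(A+B)_{i,i} = a_{i,i} + b_{i,i}$ directly from the definition of matrix addition in $\overline{\R}$, and then use the commutativity and associativity of addition in $\overline{\R}$ to write
\[
\tr(A+B) = \sum_{i=1}^n (a_{i,i}+b_{i,i}) = \sum_{i=1}^n a_{i,i} + \sum_{i=1}^n b_{i,i} = \tr(A) + \tr(B).
\]
For part (2), scalar multiplication acts entrywise, so $(\alpha A)_{i,i} = \alpha\, a_{i,i}$, and distributivity of $\cdot$ over $+$ in $\overline{\R}$ (which is part of the semiring structure and still holds in the presence of the absorbing element $-\infty$) yields $\tr(\alpha A) = \sum_i \alpha\, a_{i,i} = \alpha \sum_i a_{i,i} = \alpha \tr(A)$.

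For part (3), the key step is to expand both sides as double sums. By definition,
\[
\tr(AB) = \sum_{i=1}^n (AB)_{i,i} = \sum_{i=1}^n \sum_{j=1}^n a_{i,j} b_{j,i},
\]
and likewise $\tr(BA) = \sum_{j=1}^n \sum_{i=1}^n b_{j,i} a_{i,j}$. Since $\overline{\R}$ is commutative, each term satisfies $a_{i,j} b_{j,i} = b_{j,i} a_{i,j}$; swapping the order of the two finite summations (legal by commutativity and associativity of $+$ in $\overline{\R}$) then identifies the two expressions.

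The only subtlety I anticipate is keeping track of the absorbing element $-\infty$ in the sums, but since $-\infty$ behaves as a proper additive identity on $\overline{\R}$ (any term $a_{i,j}b_{j,i}$ that involves $-\infty$ simply vanishes and can be added or dropped freely), this does not affect the argument. Thus I expect the lemma to reduce to the routine verifications above, with no genuine obstacle.
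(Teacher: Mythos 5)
Your proof is correct and does exactly what the paper intends: the paper's own proof is the single line ``These properties can be proved just as the classical theory,'' and your expansion fills in precisely the routine entrywise and double-sum manipulations, correctly noting that commutativity of $\L$ (hence of $\overline{\R}$) is what makes part (3) go through and that $-\infty$ behaves as a harmless additive identity.
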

\begin{proof}
These properties can be proved just as the classical theory.
\end{proof}

\subsection{ELT Nilpotent Matrices}
\begin{defn}
Let $\R$ be a commutative ELT ring. A matrix $A\in \left(\overline{\R}\right)^{n\times n}$ is called \textbf{ELT nilpotent}, if there exists $m\in\N$ such that $A^m\in\zeroset{\left(\overline{\R}\right)^{n\times n}}$.
\end{defn}

Similarly to the classical theory, one would expect that the trace of a nilpotent matrix would be of layer zero; however, this is wrong. For this
reason, we define the essential trace in the next subsubsection.

\begin{example}
Let $\R=\ELT{\mathbb{R}}{\mathbb{C}}$, and consider the following matrix: $\displaystyle{A=\begin{pmatrix}\layer{0}{1} & \layer{1}{0}\\
\layer{0}{0} & \layer{0}{1}
\end{pmatrix}}$.
Then $\tr\left(A\right)=\layer{0}{2}$, while
$$A^{2}=\begin{pmatrix}\layer{0}{1} & \layer{1}{0}\\
\layer{0}{0} & \layer{0}{1}
\end{pmatrix}\begin{pmatrix}\layer{0}{1} & \layer{1}{0}\\
\layer{0}{0} & \layer{0}{1}
\end{pmatrix}=\begin{pmatrix}\layer{1}{0} & \layer{1}{0}\\
\layer{0}{0} & \layer{1}{0}
\end{pmatrix}.$$
Therefore, ELT nilpotent matrices don't necessarily  have zero-layered trace.
\end{example}

Another interesting example is an ELT nilpotent matrix, whose determinant is not of layer zero.
\begin{example}
Let $\R$ be a commutative ELT ring, and take $a,b,c,d\in\R$ such that
$$\t\left(a^2\right)<\t\left(bc\right)<\t\left(d^2\right)$$
and $s\left(d\right)=0$. Consider the matrix $\displaystyle{A=\begin{pmatrix}a&b\\c&d\end{pmatrix}}$. We have
$$A^2=\begin{pmatrix}a^2+bc&ab+bd\\ac+cd&bc+d^2\end{pmatrix}=\begin{pmatrix}bc&bd\\cd&d^2\end{pmatrix}$$
and
$$A^3=\begin{pmatrix}abc+bcd&abd+bd^2\\bc^2+cd^2&bcd+d^3\end{pmatrix}=\begin{pmatrix}bcd&bd^2\\cd^2&d^3\end{pmatrix}$$
which is of layer zero, since $s\left(d\right)=0$.

But we may choose $a,b,c,d$ such that $\t\left(bc\right)>\t\left(ad\right)$ and $s\left(bc\right)\neq0$; in that case, $A$ is quasi-invertible and ELT nilpotent.
\end{example}

\subsection{The Essential Trace}\label{sec:etr}

Before defining the new notion of trace, we give an important definition, which is significant in our construction of the new trace.
\begin{defn}
Let $\R$ be a commutative ELT ring, and let $\displaystyle{p\left(\lambda\right)=\sum_{i=1}^nh_i\in\overline{\R}\left[\lambda\right]}$ be an ELT polynomial, where each $h_i$ is a monomial. For a monomial $h$, define $\displaystyle{p_h\left(\lambda\right)=\sum_{h_i\neq h}h_i}$.
\begin{enumerate}
\item The monomial $h$ is called \textbf{inessential at a point $a\in\R$}, if $p\left(a\right)=p_h\left(a\right)$ and $\t\left(h\left(a\right)\right)<\t\left(p\left(a\right)\right)$.
    If $h$ is inessential at every point of $\R$, it is called \textbf{inessential}.
\item The monomial $h$ is called \textbf{essential at a point $a\in\R$}, if $p\left(a\right)=h\left(a\right)$ and $\t\left(p_h\left(a\right)\right)<\t\left(p\left(a\right)\right)$.
    If $h$ is essential at some point of $\R$, it is called \textbf{essential}.
\item The monomial $h$ is called \textbf{quasi-essential at a point $a\in\R$}, if it is neither inessential at $a$ nor essential at $a$.
    If $h$ is quasi-essential at some point of $\R$, it is called \textbf{quasi-essential}.
\end{enumerate}
\end{defn}

Throughout the rest of the section, we need a stronger assumption on our ELT algebras. We require them to be \textbf{divisible ELT fields}, that is ELT algebras of the form $\R=\ELT{\F}{\L}$, where $\F$ is a divisible group and $\L$ is a field.

\begin{defn}
Let $\R$ be a divisible ELT field, and let $A\in \left(\overline{\R}\right)^{n\times n}$. We define
\begin{eqnarray*}
L\left(A\right) & = & \left\{ \ell\ge1\middle|\forall k\leq n:\,\frac{\t\left(c_{\ell}\right)}{\ell}\ge\frac{\t\left(c_k\right)}{k}\right\} \\
\mu\left(A\right) & = & \min\left(L\left(A\right)\right)
\end{eqnarray*}
We write $\mu$ for $\mu\left(A\right)$, if $A$ is given.
\end{defn}

\begin{lem}\label{lem:etr-justification}
Let $\R$ be a divisible ELT field, and let $p\left(\lambda\right)=\lambda^{n}+{\displaystyle \sum_{i=1}^{n}\alpha_{i}\lambda^{n-i}}\in\overline{\R}\left[\lambda\right]$ be an ELT polynomial. Then the first monomial after $\lambda^{n}$ that is not inessential is $\alpha_{\mu}\lambda^{n-\mu}$.
\end{lem}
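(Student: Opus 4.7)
The plan is to pass from the ELT polynomial to its tangible shadow via the homomorphism $\t:\R\to\F_{\max}$ of \Lref{lem:hom-elt-maxplus}, and then argue by elementary linear geometry on the lines $f_k(t):=\t(\alpha_k)+(n-k)t$ (setting $\alpha_0=\layer{0}{1}$), which give the tangible value of each monomial $h_k(\lambda)=\alpha_k\lambda^{n-k}$ at any $\lambda\in\R$ with $\t(\lambda)=t$. A first observation I would record is the equivalence: $h_k$ is inessential at $a$ iff $f_k(\t(a))<\max_{j\ne k}f_j(\t(a))$, because a strict gap in tangible value simultaneously forces $\t(h_k(a))<\t(p(a))$ and causes $h_k(a)$ to be absorbed in the sum, giving $p(a)=p_{h_k}(a)$. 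Thus the lemma reduces to a statement about which $f_k$ touch the upper envelope $\max_k f_k$.

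Next, I would show $\alpha_\mu\lambda^{n-\mu}$ is not inessential. Setting $t^*:=\t(\alpha_\mu)/\mu\in\F$ (using divisibility of $\F$) and evaluating at $a=\layer{t^*}{1}$, one has $f_\mu(t^*)=nt^*=f_0(t^*)$; moreover, the defining inequality $\t(\alpha_k)/k\le t^*$ of $\mu\in L(A)$ yields $f_k(t^*)\le nt^*$ for every $k$. Hence $f_\mu(t^*)$ attains the maximum, so $\t(h_\mu(a))=\t(p(a))$, and the condition $\t(h(a))<\t(p(a))$ in the definition of inessentiality fails at $a$.

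Then I would fix $1\le k<\mu$ (the case $\alpha_k=-\infty$ being trivial). Since $\mu=\min L(A)$, we have $k\notin L(A)$, so $\t(\alpha_k)/k<t^*$ strictly. I would split on $t=\t(a)$: when $t>\t(\alpha_k)/k$ the leading monomial dominates, as $f_0(t)-f_k(t)=kt-\t(\alpha_k)>0$; when $t\le\t(\alpha_k)/k$ I would use $f_\mu$ as the dominator. Using $\t(\alpha_\mu)=\mu t^*$ one rewrites
\[f_\mu(t)-f_k(t)=(\mu-k)(t^*-t)+(kt^*-\t(\alpha_k)),\]
where both summands are strictly positive under the case assumptions ($\mu>k$, $t<t^*$, $\t(\alpha_k)<kt^*$). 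In either case $f_k(t)<\max_{j\ne k}f_j(t)$, so $h_k$ is inessential at every $a\in\R$, completing the argument.

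The main obstacle is this second regime $t\le\t(\alpha_k)/k$, where the leading monomial no longer dominates $f_k$ and one must exhibit a genuinely ``interior'' dominator on the upper envelope---here $f_\mu$ itself---whose existence relies precisely on the strict gap $\t(\alpha_k)/k<\t(\alpha_\mu)/\mu$ coming from the minimality of $\mu$ in $L(A)$. Apart from that, the argument is a direct case split driven by the homomorphism to $\F_{\max}$.
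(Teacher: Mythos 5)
Your proof is correct and follows essentially the same geometric idea as the paper's: reduce to the upper envelope of the lines $f_k(t)=\t(\alpha_k)+(n-k)t$ via the homomorphism $\t$, and identify the largest intersection point with $f_0$ as the threshold. The paper's proof is only a brief sketch of this computation; your version is more careful, in particular the two-case argument (dominator $f_0$ when $t>\t(\alpha_k)/k$, dominator $f_\mu$ when $t\le\t(\alpha_k)/k$) makes explicit the step showing each $h_k$ with $1\le k<\mu$ is inessential, which the paper leaves implicit.
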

\begin{proof}
We need to find the monomial for which the intersection between $\lambda^{n}$ and $\alpha_{\ell}\lambda^{n-\ell}$ is maximal (in the sense that its tangible value is maximal).

First, we compute the tangible value of the intersection:
$$\lambda^{n}=\alpha_{\ell}\lambda^{n-\ell}\Rightarrow\t\left(\lambda\right)=\frac{\t\left(\alpha_\ell\right)}{\ell}$$
The tangible value of the value of the polynomial at that point is $\frac{n\t\left(\alpha_\ell\right)}{\ell}$.

So, if
$$\forall k\leq n:\,\frac{\t\left(\alpha_\ell\right)}{\ell}\ge\frac{\t\left(\alpha_k\right)}{k}$$
$\ell$ satisfies our conditions. Take such $\ell$ minimal, which is $\mu$, and we are done.
\end{proof}

\begin{rem}
If $\left|L\left(A\right)\right|\ge2$, then $\alpha_{\mu}\lambda^{n-\mu}$ is only quasi-essential in $p_A\left(\lambda\right)$.
\end{rem}

\begin{defn}
Let $\R$ be a divisible ELT field, and let $A\in \left(\overline{\R}\right)^{n\times n}$ be a matrix. Assume $p_{A}\left(\lambda\right)=\lambda^{n}+{\displaystyle \sum_{i=1}^{n}\alpha_{i}\lambda^{n-i}}$. $\alpha_{\mu}$ is called the \textbf{dominant characteristic coefficient}. The \textbf{essential trace} of $A$, denoted $\etr\left(A\right)$, is given by the formula
$$\etr\left(A\right)=\left\{\begin{matrix}\tr\left(A\right)&\minus\tr\left(A\right)\lambda^{n-1}\textnormal{ is essential in }p_A\left(\lambda\right)\\\layer{\left(\frac{\t\left(\alpha_\mu\right)}{\mu}\right)}{0}&\textnormal{Otherwise}\end{matrix}\right.$$
\end{defn}

\begin{lem}\label{lem:tr-zero-implies-etr-zero}
If $s\left(\tr\left(A\right)\right)=0$, then $s\left(\etr\left(A\right)\right)=0$.
\end{lem}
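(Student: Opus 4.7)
The statement follows almost immediately from the definition of $\etr(A)$ by splitting into the two cases that define it. The plan is to handle each case separately, observing that the nontrivial hypothesis $s(\tr(A)) = 0$ is only needed in one of them.

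First I would consider the case where $\minus\tr(A)\lambda^{n-1}$ is essential in $p_A(\lambda)$. By the very definition of the essential trace, we have $\etr(A) = \tr(A)$. Applying the sorting map $s$ and invoking the hypothesis gives $s(\etr(A)) = s(\tr(A)) = 0$, which is what we want.

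The remaining case is when $\minus\tr(A)\lambda^{n-1}$ is not essential in $p_A(\lambda)$. In that case the definition gives $\etr(A) = \layer{(\t(\alpha_\mu)/\mu)}{0}$, a layered element whose layer is literally written as $0$. Thus $s(\etr(A)) = 0$ without any additional argument, and in particular the hypothesis $s(\tr(A)) = 0$ is not even used here.

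There is essentially no obstacle: the result is a direct case analysis on the definition of $\etr$, with the first case using the assumption and the second case being true by construction. The only subtlety worth flagging is to confirm that the quantity $\t(\alpha_\mu)/\mu$ makes sense inside $\F$, which is guaranteed by the standing assumption that $\R$ is a divisible ELT field (so $\F$ is divisible and the division by the integer $\mu$ is well-defined). With that noted, combining the two cases yields $s(\etr(A)) = 0$ in all situations, completing the proof.
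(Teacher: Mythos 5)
Your proof is correct and follows essentially the same case split on the definition of the essential trace that the paper itself uses: in the essential case $\etr(A)=\tr(A)$ inherits the zero layer from the hypothesis, and in the non-essential case the layer is $0$ by construction. The added remark about divisibility making $\t(\alpha_\mu)/\mu$ well-defined is a reasonable observation but not an extra idea beyond the paper's argument.
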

\begin{proof}
If $\minus \tr\left(A\right)\lambda^{n-1}$ is essential in $p_A\left(\lambda\right)$, then $\etr\left(A\right)=\tr\left(A\right)$.
which is of layer zero. Otherwise,~$\minus \tr\left(A\right)\lambda^{n-1}$ is not essential in $p_A\left(\lambda\right)$, and thus, by the definition of essential trace, $s\left(\etr\left(A\right)\right)=0$.
\end{proof}

\begin{defn}
Let $A\in \left(\overline{\R}\right)^{n\times n}$ be a matrix over a commutative ELT ring $\R$. A \textbf{path from $i$ to~$j$} is an expression of the form
$$s=a_{i,i_{2}}a_{i_{2},i_{3}}\dots a_{i_{k},j}$$
where $1\leq i_{1},\dots,i_{k+1}\leq n$. If $i=j$, we call it a \textbf{multicycle}. The \textbf{length} of $s$ is $\left|s\right|=k$. The \textbf{tangible average value} of $s$ is $\displaystyle{\frac{\t\left(s\right)}{\left|s\right|}}$. A \textbf{simple cycle} is a multicycle from $i$ to $i$, such that $i_{j}\neq i_{j'}$ for $j\neq j'$ (with $i_{1}=i$).
\end{defn}

\begin{fact}
Every multicycle can be written as a product of simple cycles.
\end{fact}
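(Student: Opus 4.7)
The plan is to proceed by strong induction on the length $k$ of the multicycle, exploiting the fact that the ELT ring is commutative so that products of entries are independent of the order in which they are multiplied.

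For the base case, a multicycle of length $1$ is just a loop $a_{i,i}$, which is trivially a simple cycle. For the inductive step, let $s = a_{i,i_2}a_{i_2,i_3}\cdots a_{i_k,i}$ be a multicycle of length $k$ from $i$ to $i$. If the vertex sequence $i_1 = i, i_2, \dots, i_k$ has no repetition, then $s$ is already a simple cycle and we are done. Otherwise, I would locate indices $1 \le p < q \le k+1$ (with $i_{k+1} := i$) such that $i_p = i_q$ and $(p,q) \ne (1, k+1)$, i.e.\ an honest internal repetition.

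Given such $p, q$, I would split the edge sequence into an \emph{inner} piece $s' = a_{i_p,i_{p+1}}a_{i_{p+1},i_{p+2}}\cdots a_{i_{q-1},i_q}$, which is a multicycle from $i_p$ back to $i_p$ of length $q-p$, and an \emph{outer} piece $s'' = a_{i,i_2}\cdots a_{i_{p-1},i_p}a_{i_q,i_{q+1}}\cdots a_{i_k,i}$, which is a multicycle from $i$ back to $i$ of length $k-(q-p)$. Because $(p,q) \ne (1, k+1)$, both lengths are strictly less than $k$, so by the induction hypothesis each of $s'$ and $s''$ is a product of simple cycles. Since $\overline{\R}$ is commutative, multiplying the two factorizations gives a presentation of $s = s' \cdot s''$ as a product of simple cycles.

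There is no serious obstacle here; the only care required is in the indexing, specifically in ensuring that the chosen repetition $(p, q)$ produces two strictly shorter multicycles (which is exactly why we exclude the trivial pair $(1, k+1)$), and in invoking the commutativity of $\overline{\R}$ to reassemble the two sub-products into $s$ regardless of the original traversal order.
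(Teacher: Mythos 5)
The paper states this as an unproved \emph{Fact} --- it is treated as a known combinatorial observation, so there is no ``paper proof'' to compare against. Your strong-induction argument is the standard correct proof: base case of length $1$ is a loop, which vacuously satisfies the simple-cycle condition; in the inductive step you correctly note that any nontrivial repetition $(p,q)\neq(1,k+1)$ in the vertex sequence $i_1=i,i_2,\dots,i_k,i_{k+1}=i$ lets you excise an inner multicycle of length $q-p$ and leave an outer multicycle of length $k-(q-p)$, both strictly shorter, after which the inductive hypothesis applies. The only point worth flagging is minor: you invoke commutativity to ``reassemble'' $s=s'\cdot s''$, which is indeed needed since the inner piece sits strictly between two halves of the outer piece in the original traversal; the paper's standing hypothesis that $\R$ is a commutative ELT ring covers this, but you are right that the reassembly is not purely notational. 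No gaps.
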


\begin{lem}\label{lem:i,j-ele-of-A^k}
The $\left(i,j\right)$ element in $A^{k}$ is the sum of all paths from $i$ to $j$. That is,
$$\left(A^{k}\right)_{i,j}=\sum_{1\leq i_{2},\dots,i_{k}\leq n}a_{i,i_{2}}a_{i_{2},i_{3}}\dots a_{i_{k},j}$$
\end{lem}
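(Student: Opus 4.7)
The plan is a routine induction on $k$, exactly as in classical linear algebra, since the formula for matrix multiplication in an ELT algebra is defined entry-wise using the same sum-of-products expression as in the commutative ring case.

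First, I would establish the base case $k=1$: by definition $(A^1)_{i,j}=a_{i,j}$, which is the unique ``path'' of length $1$ from $i$ to $j$, so the sum on the right (over the empty index set $1\le i_2,\dots,i_k\le n$ when $k=1$) is exactly $a_{i,j}$, matching the claim.

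For the inductive step, assume the formula holds for some $k\ge 1$. By the definition of ELT matrix multiplication (which, entry-wise, is identical in form to the classical one, since $(\overline{\R},+,\cdot)$ is a semiring),
\[
(A^{k+1})_{i,j}=\sum_{m=1}^{n}(A^{k})_{i,m}\cdot a_{m,j}.
\]
Substituting the inductive hypothesis for $(A^{k})_{i,m}$ and using distributivity of $\cdot$ over $+$ in $\overline{\R}$, this becomes
\[
(A^{k+1})_{i,j}=\sum_{m=1}^{n}\Bigl(\sum_{1\le i_2,\dots,i_k\le n}a_{i,i_2}a_{i_2,i_3}\dots a_{i_k,m}\Bigr)a_{m,j}=\sum_{1\le i_2,\dots,i_k,m\le n}a_{i,i_2}a_{i_2,i_3}\dots a_{i_k,m}a_{m,j},
\]
which, upon relabelling $m=i_{k+1}$, is precisely the sum of all paths of length $k+1$ from $i$ to $j$, completing the induction.

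There is essentially no obstacle here: the only subtlety is to make sure that the manipulations are valid in $\overline{\R}$, but distributivity, associativity and commutativity of $+$ and $\cdot$ hold in any commutative ELT ring (and the presence of the formal element $-\infty$ causes no trouble, as it acts as a proper additive identity). No appeal to the transfer principle or to the negation map is needed, since the statement is a positive polynomial identity in the entries of $A$.
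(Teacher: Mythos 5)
Your proof is correct and takes essentially the same approach as the paper's: a routine induction on $k$, using the entry-wise definition of matrix multiplication and distributivity in $\overline{\R}$. The only cosmetic difference is that you peel off the last factor $a_{m,j}$ while the paper peels off the first factor $a_{i,\ell}$; the argument is otherwise identical.
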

\begin{proof}
By induction on $k$, where the case $k=1$ is clear. If the assertion is true for some $k$, then
$$\left(A^{k+1}\right)_{i,j}=\sum_{\ell=1}^{n}a_{i,\ell}A_{\ell,j}^{k}=\sum_{j=1}^{n}\,\sum_{1\leq i_{2},\dots,i_{k}\leq n}a_{i,\ell}a_{\ell,i_{2}}a_{i_{2},i_{3}}\dots a_{i_{k},j}=\sum_{1\leq i_{2},\dots,i_{k+1}\leq n}a_{i,i_{2}}a_{i_{2},i_{3}}\dots a_{i_{k+1},j}$$
\end{proof}

\begin{lem}\label{lem:coeff-of-char-poly}
The coefficient of $\lambda^{n-k}$ in $p_{A}\left(\lambda\right)$ is
$$\left(\minus\right)^{k}\sum_{1\leq i_{1},\dots,i_{k}\leq n}\,\sum_{\sigma\in S_{k}}\layer 0{\sgn\sigma}\, a_{i_{1},i_{\sigma\left(1\right)}}a_{i_{2},i_{\sigma\left(2\right)}}\dots a_{i_{k},i_{\sigma\left(k\right)}}$$
\end{lem}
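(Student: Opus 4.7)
The plan is to expand $p_A(\lambda)=\det(\lambda I_n+\minus A)$ directly via the Leibniz formula for the ELT determinant and collect the coefficient of $\lambda^{n-k}$; no transfer principle is needed, since this is a polynomial identity inside $\overline\R[\lambda]$. First I would write
\[
p_A(\lambda)=\sum_{\pi\in S_n}\layer{0}{\sgn\pi}\prod_{i=1}^{n}\bigl(\lambda\,\delta_{i,\pi(i)}+\minus a_{i,\pi(i)}\bigr).
\]
For each $\pi$, the factor indexed by $i$ is either $\minus a_{i,\pi(i)}$ (when $\pi(i)\ne i$) or the binomial $\lambda+\minus a_{i,i}$ (when $i$ lies in the fixed-point set $F_\pi$). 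Expanding each such binomial over a choice of subset $S\subseteq F_\pi$, where $S$ records the factors that contribute~$\lambda$, produces a monomial of degree $|S|$ in $\lambda$ with ELT coefficient $\minus^{n-|S|}\prod_{i\notin S}a_{i,\pi(i)}$, since precisely $n-|S|$ factors contribute a $\minus$.

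To isolate the coefficient of $\lambda^{n-k}$, I would set $|S|=n-k$ and pass to the complementary set $T=\{1,\dots,n\}\setminus S$ of size $k$. By construction, $T$ must contain every non-fixed point of $\pi$. Swapping the order of summation, first over $k$-subsets $T\subseteq\{1,\dots,n\}$ and then over eligible $\pi$, the $\pi$'s contributing to a fixed $T$ are exactly those fixing $\{1,\dots,n\}\setminus T$ pointwise, so the restriction $\pi\mapsto\pi|_T$ is a sign-preserving bijection onto $S_T$. This yields
\[
[\lambda^{n-k}]\,p_A(\lambda)=\minus^{k}\sum_{\substack{T\subseteq\{1,\dots,n\}\\|T|=k}}\sum_{\sigma\in S_T}\layer{0}{\sgn\sigma}\prod_{i\in T}a_{i,\sigma(i)}.
\]
Writing $T=\{i_1,\dots,i_k\}$ and transporting $S_T$ to $S_k$ along this enumeration, which preserves $\sgn$, produces the stated formula.

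The calculation is routine, so I do not expect a conceptual obstacle; rather, the delicate point is bookkeeping. The sign must be tracked through two identifications ($\pi\leftrightarrow\sigma\in S_T$ and $S_T\leftrightarrow S_k$), and the exponent of $\minus$ must be counted carefully, namely, exactly the $n-|S|=k$ factors that are not $\lambda$ each carry a single $\minus$. A secondary subtlety, easy to overlook, is the interpretation of the outer index set ``$1\le i_1,\dots,i_k\le n$'' in the statement: the expansion naturally produces a sum over $k$-subsets, so this notation must be read as an ordered enumeration of such a subset (equivalently, $1\le i_1<\dots<i_k\le n$), for otherwise repeated-index tuples would overcount each principal minor by $k!$ and contribute additional nontrivial layer-zero terms of the form $\zero\prod a_{i,i}^{?}$ that would spoil the equality in $\overline\R[\lambda]$.
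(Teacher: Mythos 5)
Your proof is correct and takes the same approach as the paper's, which is simply a terser statement of the Leibniz expansion: pick the $n-k$ indices that contribute $\lambda$, leaving the determinant of the complementary $k\times k$ principal submatrix as the inner sum. Your closing remark about the index set is well taken: the notation must indeed be read as $1\le i_1<\dots<i_k\le n$, since summing over all $k$-tuples would multiply the layer by $k!$ and add extraneous layer-zero contributions from repeated indices, neither of which cancels in an ELT algebra the way it would over a ring.
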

\begin{proof}
We must choose $n-k$ indices from which we ``take'' $\lambda$ in the expansion of $\det\left(\lambda I+\minus A\right)$; we are left with a $k\times k$ submatrix, with rows $i_{1},\dots,i_{k}$ from $A$. Its determinant is the inner sum.
\end{proof}

\begin{lem}\label{lem:Mulcyc-contr-to-etr}
Any multicycle contributing to the dominant characteristic coefficient must be a simple cycle.
\end{lem}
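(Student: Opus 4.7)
The plan is to write any contributing multicycle as a product of simple cycles and exploit the minimality of $\mu$ in $L(A)$ to force the factorization to be trivial. A multicycle $c$ of length $\mu$ ``contributes to $\alpha_\mu$'' precisely when it is not inessential as a summand in the expression of $\alpha_\mu$ provided by \Lref{lem:coeff-of-char-poly}, which (using the terminology of this subsection) means $\t(c) = \t(\alpha_\mu)$.

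Fix such $c$ and, by the fact preceding \Lref{lem:i,j-ele-of-A^k}, factor it as a product of simple cycles $c = c_1 c_2 \cdots c_r$ whose lengths $\mu_1,\dots,\mu_r$ satisfy $\mu_1 + \cdots + \mu_r = \mu$. Multiplicativity of $\t$, proved in \Lref{lem:hom-elt-maxplus}, gives $\t(c) = \sum_{i=1}^r \t(c_i)$. Each $c_i$, being a simple cycle of length $\mu_i$, is (up to a sign that does not affect the tangible value) one of the summands comprising $\alpha_{\mu_i}$ by \Lref{lem:coeff-of-char-poly}; hence $\t(c_i) \le \t(\alpha_{\mu_i})$. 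Finally, $\mu \in L(A)$ yields $\t(\alpha_{\mu_i}) \le \frac{\mu_i}{\mu}\t(\alpha_\mu)$, where the division makes sense because we are working over a divisible ELT field.

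Chaining these estimates,
\[
\t(\alpha_\mu) \;=\; \t(c) \;=\; \sum_{i=1}^{r}\t(c_i) \;\le\; \sum_{i=1}^{r}\t(\alpha_{\mu_i}) \;\le\; \sum_{i=1}^{r}\frac{\mu_i}{\mu}\t(\alpha_\mu) \;=\; \t(\alpha_\mu),
\]
so every inequality is an equality. In particular $\t(\alpha_{\mu_i})/\mu_i = \t(\alpha_\mu)/\mu$ for each $i$, i.e.\ $\mu_i \in L(A)$. The minimality of $\mu$ then gives $\mu_i \ge \mu$, and together with $\mu_i \ge 1$ and $\sum_i \mu_i = \mu$ this forces $r = 1$ and $\mu_1 = \mu$. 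Hence $c$ is itself a simple cycle.

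The only genuinely delicate point is pinning down ``contributes'' in the correct sense: once one agrees (consistently with the essential/quasi-essential framework of this subsection) that a summand only contributes when its tangible value matches the tangible value of the whole, the proof is simply a chain of natural inequalities which the contribution hypothesis collapses to equalities; the divisibility assumption on $\F$ is exactly what makes the averages $\t(\alpha_k)/k$ well-defined and the averaging argument go through.
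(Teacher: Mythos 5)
Your proof is correct and follows essentially the same route as the paper: factor the contributing multicycle into simple cycles, observe that at least one of them must achieve at least the same average tangible value at a strictly shorter length, and contradict the minimality of $\mu$ in $L(A)$. The paper states this in two terse sentences; your version is the same argument with the averaging step spelled out via the chain of inequalities, which is a useful clarification but not a different approach.
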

\begin{proof}
Otherwise, assume it is not a simple cycle. Since it can be written as a product of simple cycles, at least one of which, $s$, would have $\frac{\t\left(s\right)}{\left|s\right|}\ge\etr\left(A\right)$, and a shorter length. Thus, $s$ would give a dominant characteristic coefficient of lower degree, in contradiction to our assumption.
\end{proof}

\begin{lem}
If $s\left(\etr\left(A+B\right)\right)\neq0$, then $\etr\left(A+B\right)=\etr\left(A\right)+\etr\left(B\right)$.
\end{lem}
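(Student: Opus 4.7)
The plan is to first deduce that $\etr\left(A+B\right) = \tr\left(A+B\right)$, and then show that for each of $A$ and $B$ individually either the same ``first case'' of the definition of $\etr$ applies or the value has strictly smaller tangible component, so that the ELT sum $\etr\left(A\right) + \etr\left(B\right)$ recovers $\etr\left(A+B\right)$.

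First, from $s\left(\etr\left(A+B\right)\right) \neq 0$ and the contrapositive of \Lref{lem:tr-zero-implies-etr-zero} we obtain $s\left(\tr\left(A+B\right)\right) \neq 0$; in particular $\tr\left(A+B\right) \neq -\infty$. Since the second case in the definition of $\etr$ produces a value of layer zero, the hypothesis forces the first case, so simultaneously $\etr\left(A+B\right) = \tr\left(A+B\right) = \tr\left(A\right) + \tr\left(B\right)$ and the monomial $\minus\tr\left(A+B\right)\lambda^{n-1}$ is essential in $p_{A+B}\left(\lambda\right)$. Evaluating at a point $\lambda_0 \in \R$ with $\t\left(\lambda_0\right)$ slightly below $t_{A+B} := \t\left(\tr\left(A+B\right)\right)$, this essentiality is equivalent to the coefficient estimate $\t\left(\alpha^{A+B}_k\right)/k < t_{A+B}$ for every $k \ge 2$, where $\alpha^{A+B}_k$ denotes the coefficient of $\lambda^{n-k}$ in $p_{A+B}\left(\lambda\right)$.

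The next step is the monotonicity $\t\left(\alpha^A_k\right) \le \t\left(\alpha^{A+B}_k\right)$ (and symmetrically for $B$). By \Lref{lem:coeff-of-char-poly}, each $\alpha_k$ is, up to a sign, an ELT sum indexed by tuples $\left(i_1,\dots,i_k\right)$ and permutations $\sigma\in S_k$, whose summand is a product of $k$ matrix entries. Each entry $\left(A+B\right)_{i,j} = a_{i,j}+b_{i,j}$ has tangible value $\max\left\{\t\left(a_{i,j}\right),\t\left(b_{i,j}\right)\right\}$, so the tangible value of every $\left(i_1,\dots,i_k,\sigma\right)$-summand of $\alpha^{A+B}_k$ is at least that of the corresponding pure-$A$ summand of $\alpha^A_k$. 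Since the tangible value of an ELT sum equals the maximum of the tangible values of its summands irrespective of layer cancellations, the monotonicity follows.

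Finally, we split on $t_A := \t\left(\tr\left(A\right)\right)$ versus $t_B := \t\left(\tr\left(B\right)\right)$; by symmetry assume $t_A \ge t_B$, so $t_{A+B} = t_A$. Combining the previous two steps, $\t\left(\alpha^A_k\right)/k \le \t\left(\alpha^{A+B}_k\right)/k < t_A = \t\left(\alpha^A_1\right)$ for every $k \ge 2$, so the essentiality condition holds for $A$ and $\etr\left(A\right) = \tr\left(A\right)$. In the balanced subcase $t_A = t_B$, the same argument gives $\etr\left(B\right) = \tr\left(B\right)$, and linearity of $\tr$ yields the conclusion. In the strict subcase $t_A > t_B$, we estimate $\t\left(\etr\left(B\right)\right) \le \max_k \t\left(\alpha^B_k\right)/k$: the $k=1$ term equals $t_B < t_A$, while each $k \ge 2$ term is at most $\t\left(\alpha^{A+B}_k\right)/k < t_A$. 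Hence $\t\left(\etr\left(B\right)\right) < \t\left(\etr\left(A\right)\right)$, and the ELT sum absorbs the $B$-contribution: $\etr\left(A\right)+\etr\left(B\right) = \etr\left(A\right) = \tr\left(A\right) = \tr\left(A+B\right) = \etr\left(A+B\right)$. The main obstacle is the monotonicity step, where one must confirm that layer-level cancellations inside $\alpha^A_k$ or $\alpha^{A+B}_k$ cannot pull their tangible values below the dominant summand contribution.
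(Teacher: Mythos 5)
Your proof is correct and takes essentially the same route as the paper's: establish $\etr\left(A+B\right)=\tr\left(A+B\right)$ from the essentiality of $\minus\tr\left(A+B\right)\lambda^{n-1}$, then split on the comparison between $\t\left(\tr\left(A\right)\right)$ and $\t\left(\tr\left(B\right)\right)$. The paper phrases the key monotonicity $\t\left(\alpha^A_k\right)\le\t\left(\alpha^{A+B}_k\right)$ in multicycle language (``every multicycle of $A$ is a part of a multicycle of $A+B$''), whereas you derive it directly from \Lref{lem:coeff-of-char-poly} and the fact that $\t$ is a max-plus homomorphism; the two formulations are equivalent, and your version is somewhat more explicit about why layer cancellations cannot affect the tangible values of the coefficients.
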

\begin{proof}
The assumption can only happen if $\minus\tr\left(A+B\right)\lambda^{n-1}$ is essential in $p_{A+B}\left(\lambda\right)$, and $\etr\left(A+B\right)=\tr\left(A+B\right)$.

The multicycles of $A+B$ are products of sums of elements from $A$ and from $B$. In particular, every multicycle of $A$ and of $B$ is a part of a multicycle of $A+B$.

We know that $\tr\left(A+B\right)=\tr\left(A\right)+\tr\left(B\right)$; if $\t\left(\tr\left(A+B\right)\right)=\t\left(\tr\left(A\right)\right)$, then $\t\left(\tr\left(A\right)\right)>$every average value of a multicycle of $A$; so $\etr\left(A\right)=\tr\left(A\right)$. We are almost finished:
\begin{casenv}
\item If $\t\left(\tr\left(A+B\right)\right)=\t\left(\tr\left(A\right)\right)=\t\left(\tr\left(B\right)\right)$, then $\etr\left(A\right)=\tr\left(A\right)$, $\etr\left(B\right)=\tr\left(B\right)$, and $$\etr\left(A+B\right)=\tr\left(A+B\right)=\tr\left(A\right)+\tr\left(B\right)=\etr\left(A\right)+\etr\left(B\right)$$

\item If $\t\left(\tr\left(A+B\right)\right)=\t\left(\tr\left(A\right)\right)>\t\left(\tr\left(B\right)\right)$, then $\etr\left(A\right)=\tr\left(A\right)$, and $$\etr\left(A+B\right)=\tr\left(A+B\right)=\tr\left(A\right)+\tr\left(B\right)=\tr\left(A\right)=\etr\left(A\right)$$
    In particular, $\t\left(\etr\left(A\right)\right)>\t\left(\etr\left(B\right)\right)$; otherwise, there would have been a multicycle from $B$ with $\t\left(\textnormal{average value}\right)\ge\t\left(\etr\left(A\right)\right)=\t\left(\etr\left(A+B\right)\right)$, and thus $s\left(\etr\left(A+B\right)\right)=0$, which is a contradiction. So
    $$\etr\left(A+B\right)=\etr\left(A\right)=\etr\left(A\right)+\etr\left(B\right)$$
\end{casenv}
\end{proof}

\begin{example}
In general, it is not true that $\etr\left(A+B\right)\vDash\etr\left(A\right)+\etr\left(B\right)$. For example, take in $\left(\ELT{\mathbb{R}}{\mathbb{C}}\right)^{2\times 2}$ the following matrices: $A=\left(\begin{matrix}\layer{0}{1}&\layer{0}{1}\\-\infty&\layer{0}{1}\end{matrix}\right)$, $B=A^t$. Then $\etr\left(A\right)=\etr\left(B\right)=\layer{0}{2}$. However, $A+B=\left(\begin{matrix}\layer{0}{2}&\layer{0}{1}\\\layer{0}{1}&\layer{0}{2}\end{matrix}\right)$, and $p_{A+B}\left(\lambda\right)=\lambda^2+\layer{0}{-4}\lambda+\layer{0}{3}$. The monomial $\layer{0}{-4}\lambda$ is quasi-essential, and thus $\etr\left(A+B\right)=\layer{0}{0}$.
\end{example}

\begin{lem}
$\etr\left(AB\right)=\etr\left(BA\right)$.
\end{lem}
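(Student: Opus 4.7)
The plan is to observe that the essential trace $\etr(M)$ depends on $M$ only through the characteristic polynomial $p_M(\lambda)$, and then to prove the equality of characteristic polynomials $p_{AB}(\lambda) = p_{BA}(\lambda)$ as ELT polynomials in $\lambda$. Indeed, by \Lref{lem:coeff-of-char-poly} the coefficient of $\lambda^{n-1}$ in $p_M(\lambda)$ is $\minus\tr(M)$, so $\tr(M)$ is recovered from $p_M$; the definition of $\etr(M)$ uses only the monomial $\minus\tr(M)\lambda^{n-1}$ (together with its essentiality status in $p_M$) and, in the ``otherwise'' case, the dominant characteristic coefficient $\alpha_\mu$. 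Both ingredients are determined by $p_M(\lambda)$, so once $p_{AB} = p_{BA}$ is established, the same case of the $\etr$ definition is triggered for both matrices and the conclusion $\etr(AB) = \etr(BA)$ follows immediately.

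To prove $p_{AB}(\lambda) = p_{BA}(\lambda)$, I would invoke the ELT transfer principle for equality (\Tref{thm:Trans-Princ}). The classical identity $\det(\lambda I_n - AB) = \det(\lambda I_n - BA)$ is a polynomial identity in the entries of $A$, $B$ and $\lambda$, valid over every commutative ring, so the ring-theoretic hypothesis holds. For the tropical hypothesis, I would use condition~(2) of the remark following \Tref{thm:Trans-Princ-Surpass}: it is enough to verify coefficient-wise that the identity holds at the $\nu$-level in every commutative supertropical algebra. Via \Lref{lem:coeff-of-char-poly}, the coefficient of $\lambda^{n-k}$ is a sum over length-$k$ multicycles (with layer factors arising from permutation signs); decomposing each entry of $AB$ as $\sum_\ell a_{i,\ell}b_{\ell,j}$ turns a length-$k$ multicycle of $AB$ into a bipartite closed walk of length $2k$ alternating $A$- and $B$-edges, and a single cyclic rotation of this walk produces a length-$k$ multicycle of $BA$ with the same tangible weight. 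This is the combinatorial identity underlying $p_{AB} = p_{BA}$ in the supertropical setting, already available in the literature (cf.~\cite{IR3}).

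The main obstacle is the supertropical verification itself: one must check that the cyclic rotation bijection preserves not only the tangible weight but also the cycle length $k$, so that contributions to $\nu(p_{AB})$ and $\nu(p_{BA})$ are matched at the same $\lambda$-degree. Once this is confirmed (either by spelling out the rotation argument or by citation), the ELT transfer principle for equality lifts the tangible-level identity to the full ELT identity $p_{AB}(\lambda) = p_{BA}(\lambda)$, and the lemma follows from the first paragraph's reduction.
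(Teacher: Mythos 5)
Your first paragraph is correct and is a clean observation: since the coefficient of $\lambda^{n-1}$ in $p_M(\lambda)$ is $\minus\tr(M)$, the data entering the definition of $\etr(M)$ is fully recoverable from $p_M(\lambda)$, so proving $p_{AB}(\lambda)=p_{BA}(\lambda)$ would indeed suffice.

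The problem is that $p_{AB}(\lambda)=p_{BA}(\lambda)$ is \emph{false} in commutative ELT rings, so the hypothesis you need for the transfer principle is not available. The ELT transfer principle for equality requires the tropical identity $\t(P)=\t(Q)$ to hold (equivalently, the supertropical $\nu$-level identity), and this already fails for the constant coefficient $c_n=\pm\det$. The paper itself gives a counterexample right after \Tref{thm:det-is-mult-invert-mat}: for
$A=\left(\begin{smallmatrix}\layer{1}{1} & \layer{1}{1}\\ \layer{2}{1} & \layer{3}{1}\end{smallmatrix}\right)$ in $\ELT{\RR}{\mathbb C}$ one has
$\det\left(AA^{t}\right)=\layer{8}{1}$ while $\det\left(A^{t}A\right)=\layer{10}{0}$, so even the tangible values $\t\left(\det\left(AA^t\right)\right)=8$ and $\t\left(\det\left(A^tA\right)\right)=10$ differ. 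Thus the tropical hypothesis of \Tref{thm:Trans-Princ} is violated, and no form of the transfer principle can yield $p_{AB}=p_{BA}$.

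This also pinpoints where your rotation heuristic breaks down. Expanding a length-$k$ cycle of $AB$ via $(AB)_{i_j,i_{j+1}}=\sum_\ell a_{i_j,\ell}\,b_{\ell,i_{j+1}}$ forces a choice of intermediate indices $\ell_1,\dots,\ell_k$, and the maximizing choice can have repetitions. When $\ell_j=\ell_{j'}$, the rotated walk is not a simple length-$k$ multicycle of $BA$; instead it decomposes into shorter pieces that contribute at \emph{different} $\lambda$-degrees of $p_{BA}$, so the coefficient-wise comparison $\t\left(c_k(AB)\right)=\t\left(c_k(BA)\right)$ fails for general $k$. The paper's proof only needs the comparison for the dominant characteristic coefficient $\alpha_\mu$, and precisely there the minimality of $\mu$ (via \Lref{lem:Mulcyc-contr-to-etr}) rules out repeated $\ell$-indices. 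That local argument cannot be upgraded to equality of the full characteristic polynomials, and your proposal, which relies on that stronger and false claim, does not go through.
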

\begin{proof}
By \Lref{lem:Mulcyc-contr-to-etr}, it is enough to check only simple cycles. Assume
$$\left(AB\right)_{i_{1},i_{2}}\left(AB\right)_{i_{2},i_{3}}\dots\left(AB\right)_{i_{k},i_{1}}$$
contributes to the dominant characteristic coefficient, where $i_{j}\neq i_{j'}$ for $j\neq j'$. So there are $\ell_{1},\dots,\ell_{k}$ such that
$$s=a_{i_{1},\ell_{1}}b_{\ell_{1},i_{2}}a_{i_{2},\ell_{2}}b_{\ell_{2},i_{3}}\dots a_{i_{k},\ell_{k}}b_{\ell_{k},i_{1}}$$
contributes to the dominant characteristic coefficient, i.e. $\t\left(s\right)=\etr\left(AB\right)$.

If $\ell_{j}=\ell_{j'}$ for $j<j'$, write
\begin{eqnarray*}
s & = & \left(a_{i_{1},\ell_{1}}b_{\ell_{1},i_{2}}\dots a_{i_{j},\ell_{j}}b_{\ell_{j},i_{j'+1}}\dots a_{i_{k},\ell_{k}}b_{\ell_{k},i_{1}}\right)\left(b_{\ell_{j},i_{j+1}}a_{i_{j+1},\ell_{j+1}}\dots a_{i_{j'},\ell_{j'}}\right)=\\
 & = & \underbrace{\left(a_{i_{1},\ell_{1}}b_{\ell_{1},i_{2}}\dots a_{i_{j},\ell_{j}}b_{\ell_{j},i_{j'+1}}\dots a_{i_{k},\ell_{k}}b_{\ell_{k},i_{1}}\right)}_{s_{1}}\underbrace{\left(a_{i_{j+1},\ell_{j+1}}\dots a_{i_{j'},\ell_{j'}}b_{\ell_{j},i_{j+1}}\right)}_{s_{2}}
\end{eqnarray*}
Since $\t\left(s\right)=\t\left(\etr\left(AB\right)\right)$, $\t\left(s_{1}\right)\ge\t\left(\etr\left(AB\right)\right)$ or $\t\left(s_{2}\right)\ge\t\left(\etr\left(AB\right)\right)$. Since $s_{1}$ and $s_{2}$ are shorter, we get a contradiction.

So $\ell_{j}\neq\ell_{j'}$ for $j\neq j'$. So
$$s=b_{\ell_{1},i_{2}}a_{i_{2},\ell_{2}}b_{\ell_{2},i_{3}}\dots a_{i_{k},\ell_{k}}b_{\ell_{k},i_{1}}a_{i_{1},\ell_{1}}$$
is a part of the simple cycle
$$\left(BA\right)_{\ell_{1},\ell_{2}}\left(BA\right)_{\ell_{2},\ell_{3}}\dots\left(BA\right)_{\ell_{k},\ell_{1}}$$
in $BA$. So every simple cycle contributing to $\etr\left(AB\right)$ also contributes to $\etr\left(BA\right)$.

By symmetry, the opposite is true as well; so $\etr\left(AB\right)=\etr\left(BA\right)$.
\end{proof}

\begin{lem}\label{lem:multicycle-ge-tr}
If there is a multicycle $s=a_{i_1,i_{2}}a_{i_{2},i_{3}}\dots a_{i_{k},i_1}$ of length $k\ge 2$ such that $$\t\left(\tr\left(A\right)^k\right)<\t\left(s\right)$$
then $\minus\tr\left(A\right)\lambda^{n-1}$ is not essential in $p_{A}\left(\lambda\right)$ (meaning, it is either inessential or quasi-essential). In other words, $\mu\left(A\right)\ge 2$.
\end{lem}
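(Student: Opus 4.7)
The plan is to show $\mu(A) \ge 2$ directly from the definition of $L(A)$, by exhibiting some $k \le n$ with $\t(\alpha_k)/k > \t(\alpha_1)$, where I write $p_A(\lambda) = \lambda^n + \sum_{i=1}^n \alpha_i\lambda^{n-i}$. Once this strict inequality is in hand, \Lref{lem:etr-justification} states that the first non-inessential monomial of $p_A$ after $\lambda^n$ is $\alpha_\mu\lambda^{n-\mu}$; so every earlier monomial $\alpha_\ell\lambda^{n-\ell}$ with $\ell < \mu$ is inessential, and in particular $\alpha_1\lambda^{n-1}=\minus\tr(A)\lambda^{n-1}$ is not essential, which is exactly the conclusion of the lemma.

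First I would identify $\alpha_1 = \minus\tr(A)$ by direct expansion of $\det(\lambda I + \minus A)$ (equivalently, \Lref{lem:coeff-of-char-poly} at $k=1$), so $\t(\alpha_1) = \t(\tr(A))$. Then I would apply \Lref{lem:coeff-of-char-poly} to the index $k$ supplied by the hypothesis. The key observation is that the multicycle $s = a_{i_1,i_2}a_{i_2,i_3}\cdots a_{i_k,i_1}$ appears as a single summand in that formula: take $\sigma$ to be the $k$-cycle $(1\ 2\ \cdots\ k)\in S_k$ together with the indices $i_1,\ldots,i_k$ from the hypothesis, and the summand is precisely $s$ (up to the global sign which affects only the layer, not the tangible value). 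Since $\t$ is a max-plus homomorphism by \Lref{lem:hom-elt-maxplus}, the tangible value of the ELT sum defining $\alpha_k$ equals the maximum of tangible values of its summands, and hence $\t(\alpha_k) \ge \t(s)$.

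Combining these, the hypothesis $\t(\tr(A)^k) < \t(s)$ translates to $k\,\t(\alpha_1) < \t(s) \le \t(\alpha_k)$, yielding $\t(\alpha_k)/k > \t(\alpha_1)$. Thus $1 \notin L(A)$, so $\mu(A) \ge 2$. I do not foresee any serious obstacle: the only point to verify carefully is that the hypothesized multicycle appears verbatim (via the chosen $k$-cycle) in the sum given by \Lref{lem:coeff-of-char-poly}, so that its tangible value does provide a legitimate lower bound on $\t(\alpha_k)$; after this identification, everything reduces to comparing tangible values in the totally ordered group $\F$.
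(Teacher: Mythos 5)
Your argument for the case $k \le n$ matches the paper's almost verbatim: identify $\alpha_1 = \minus\tr(A)$, use \Lref{lem:coeff-of-char-poly} with the $k$-cycle $(1\,2\,\cdots\,k)$ on the given indices to see that $s$ is a summand of $\alpha_k$, deduce $\t(\alpha_k) \ge \t(s)$ because $\t$ turns ELT sums into max-plus maxima, and then compare $\t(\alpha_1)$ with $\t(\alpha_k)/k$ to conclude $1 \notin L(A)$, i.e.\ $\mu(A) \ge 2$.

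The gap lies in your opening clause ``by exhibiting some $k \le n$'': you then take $k$ straight from the hypothesis, but nothing in the definition of a multicycle bounds its length by $n$ (a multicycle may revisit indices, so its length is unbounded). If $k > n$ then $p_A(\lambda)$ has no coefficient $\alpha_k$, \Lref{lem:coeff-of-char-poly} does not apply, and your argument halts. The paper fixes this by first decomposing a long multicycle into shorter (simple) cycles $s = s_1 \cdots s_j$. Since $\t$ sends products to sums, $\t(s) = \sum_\ell \t(s_\ell)$ while $\t(\tr(A)^k) = \sum_\ell \t\bigl(\tr(A)^{|s_\ell|}\bigr)$, so the hypothesis $\t(\tr(A)^k) < \t(s)$ forces $\t\bigl(\tr(A)^{|s_\ell|}\bigr) < \t(s_\ell)$ for at least one $\ell$; that cycle has length at most $n$, and at least $2$, because a length-$1$ cycle $a_{j,j}$ satisfies $\t(a_{j,j}) \le \t(\tr(A))$, which would contradict the inequality. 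Replacing $s$ by this $s_\ell$ puts you back in the case you did handle. Without that reduction step, your proof is incomplete.
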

\begin{proof}
Firstly, we may assume that $k\leq n$; otherwise, write $s$ as a product of multicycle, $s=s_1\dots s_j$. Since
$$\t\left(\tr\left(A\right)^k\right)<\t\left(s\right)=\sum_{\ell=1}^j\t\left(s_\ell\right)$$
At least one $s_\ell$ should satisfy $\t\left(\tr\left(A\right)^{\left|s_\ell\right|}\right)<\t\left(s_\ell\right)$, and we may replace $k$ by $\ell$.

Therefore, we assume $k\leq n$. Write $p_{A}\left(\lambda\right)=\lambda^{n}+{\displaystyle \sum_{i=1}^{n}}\alpha_{i}\lambda^{n-i}$. By \Lref{lem:coeff-of-char-poly},
$$c_k=a_{i_1,i_{2}}a_{i_{2},i_{3}}\dots a_{i_{k},i_1}+\cdots=s+\cdots$$
In particular, $\t\left(c_k\right)\ge\t\left(s\right)$. Also recall that $c_1=\minus\tr\left(A\right)$. Therefore,
$$\t\left(c_1\right)=\t\left(\tr\left(A\right)\right)=\frac{\t\left(\tr\left(A\right)^k\right)}{k}< \frac{\t\left(s\right)}{k}\leq\frac{\t\left(c_k\right)}{k}$$
An thus $\mu\left(A\right)\ge k\ge 2$, as required.
\end{proof}

\begin{lem}\label{lem:ELT-nilpotent-etr}
If $A$ is ELT nilpotent, and if $s\left(\tr\left(A\right)\right)\neq0$, then $\minus\tr\left(A\right)\lambda^{n-1}$ is not essential in $p_{A}\left(\lambda\right)$. In other words, $\minus\tr\left(A\right)\lambda^{n-1}$ is either quasi-essential or inessential in $p_{A}\left(\lambda\right)$.
\end{lem}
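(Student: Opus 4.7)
The plan is to apply \Lref{lem:multicycle-ge-tr}: it suffices to exhibit a multicycle $s$ of length $k \geq 2$ with $\t(s) > kd$, where $d := \t(\tr(A))$. I would argue by contradiction, assuming every multicycle of length $\geq 2$ satisfies $\t(s) \leq kd$. Combined with the self-loop bound $\t(a_{i,i}^k) = k\t(a_{i,i}) \leq kd$, this means the maximum cycle mean of the tangible-value graph of $A$ equals $d$ exactly, and the plan is to combine this with ELT nilpotency to force $s(\tr(A))=0$, contradicting the hypothesis.

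The key construction is an auxiliary classical matrix $\tilde{A} \in \L^{n \times n}$ that records the ``layer at the critical tangible value''. To set this up cleanly, I would first normalize by replacing $A$ with $\layer{-d}{1}A$; this preserves ELT nilpotency and the layer of $\tr(A)$, and shifts all cycle means by $-d$, reducing to the case $d=0$. Next, since the maximum cycle mean is $0$, the standard max-plus Bellman--Ford construction yields potentials $u_i \in \F$ satisfying $\t(a_{i,j}) + u_j - u_i \leq 0$ for every $i,j$. Conjugating $A$ by $D = \operatorname{diag}(\layer{u_1}{1},\dots,\layer{u_n}{1})$ preserves the trace, characteristic polynomial, ELT nilpotency, and all multicycle weights (the $u_i$'s telescope around any closed walk), while making every entry of $A$ have tangible value $\leq 0$ and leaving the diagonal unchanged.

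Now define $\tilde{A} \in \L^{n\times n}$ by $\tilde{A}_{i,j} := s(a_{i,j})$ if $\t(a_{i,j})=0$, and $\tilde{A}_{i,j}:=0$ otherwise. A path-counting argument in the spirit of \Lref{lem:i,j-ele-of-A^k} shows that for every $i,j,m$, the entry $(\tilde{A}^m)_{i,j}$ equals $s((A^m)_{i,j})$ when $\t((A^m)_{i,j}) = 0$, and vanishes otherwise (because the max-tangible paths of length $m$ are exactly those using only edges of tangible value $0$). Consequently, for any $M$ with $A^M$ zero-layered, we obtain $\tilde{A}^M = 0$ in $\L^{n\times n}$. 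Since $\L$ is a field, any classically nilpotent matrix has zero trace, so $\tr(\tilde{A}) = 0$; but $\tr(\tilde{A}) = \sum_{i=1}^n \tilde{A}_{i,i} = \sum_{i : \t(a_{i,i})=0} s(a_{i,i}) = s(\tr(A))$, contradicting $s(\tr(A))\neq 0$.

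The main obstacle I foresee is the potentials step: one must carefully produce $u_i \in \F$ when the digraph of $A$ (with $-\infty$ entries excluded) is not strongly connected, handled, e.g., by working SCC-by-SCC or by adjoining a virtual source vertex with outgoing edges of tangible value $0$. Everything else --- the translation between ELT arithmetic at the critical tangible value and classical matrix multiplication over $\L$, and the final reduction to classical trace $=0$ --- is then essentially mechanical.
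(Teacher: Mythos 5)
Your proof is correct, and it takes a genuinely different route from the paper's. The paper also opens with the observation that if some multicycle of length at least $2$ tangibly dominates the corresponding power of $a_{1,1}$, then \Lref{lem:multicycle-ge-tr} finishes; but in the complementary case the paper stays entirely inside ELT combinatorics: it takes the minimal $k$ with $s\left(A^k\right)=0$, notes that $\left(A^k\right)_{1,1}$ must contain a \emph{second} multicycle matching $a_{1,1}^k$ in tangible value (to kill the layer), decomposes it into simple cycles all of mean $\t\left(\tr\left(A\right)\right)$, and concludes $\left|L\left(A\right)\right|\ge 2$, hence quasi-essentiality. You instead show that the complementary case cannot occur: after scaling so $\t\left(\tr\left(A\right)\right)=0$ and conjugating by Bellman--Ford potentials (so all edge tangible values are $\le 0$), the ``critical subgraph'' of $A$ is encoded by an honest matrix $\widetilde{A}\in\L^{n\times n}$, the path expansion of \Lref{lem:i,j-ele-of-A^k} transports ELT nilpotency of $A$ to classical nilpotency of $\widetilde{A}$ over the field $\L$, and the classical fact $\tr\bigl(\widetilde{A}\bigr)=0$ translates back to $s\left(\tr\left(A\right)\right)=0$, contradicting the hypothesis. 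What each buys: the paper's argument is elementary and self-contained, staying at the level of paths and the coefficients $c_\ell$; your reduction uses more machinery (max-plus scaling, the residue-layer matrix, classical Cayley--Hamilton) but is conceptually cleaner and actually proves the sharper statement that under the hypotheses there \emph{must} exist a length-$\ge 2$ multicycle with $\t\left(s\right)>k\,\t\left(\tr\left(A\right)\right)$, i.e.\ $\mu\left(A\right)>1$ and $\minus\tr\left(A\right)\lambda^{n-1}$ is in fact \emph{inessential}, not merely not essential; this shows the paper's Case~2 is vacuous under the lemma's hypotheses. The one caveat you flag --- existence of the potentials $u_i\in\F$ when the digraph is not strongly connected --- is real, and your proposed fixes (virtual source vertex, or SCC-by-SCC) are both standard and correct: feasible potentials exist precisely when there is no cycle of positive mean, without any connectivity assumption. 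You also implicitly use that $\L$ is a field, which is legitimate since this section of the paper assumes $\R$ is a divisible ELT field.
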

\begin{proof}
For this proof, write $a\leq_{\t}b$ if $\t\left(a\right)\leq\t\left(b\right)$, $a<_{\t}b$ if $\t\left(a\right)<\t\left(b\right)$, and $a\equiv_{\t}b$ if $\t\left(a\right)=\t\left(b\right)$.

We take some $a_{i,i}$ such that $a_{i,i}\equiv_{\t}\tr\left(A\right)$ and $s\left(a_{i,i}\right)\ne0$; without loss of generality, let $i=1$. Write $p_{A}\left(\lambda\right)=\lambda^{n}+{\displaystyle \sum_{i=1}^{n}}\alpha_{i}\lambda^{n-i}$.

By \Lref{lem:i,j-ele-of-A^k}, $\left(A^{k}\right)_{1,1}=a_{1,1}^{k}+\cdots$. Take $k$ minimal such that $s\left(A^{k}\right)=0$. In particular, $s\left(\left(A^{k}\right)_{1,1}\right)=0$, so we have two cases:

\begin{casenv}
\item There is a multicycle $s=a_{1,i_{2}}a_{i_{2},i_{3}}\dots a_{i_{k},1}$ with $a_{1,1}^{k}<_{\t}s$. Firstly, we may assume that $k\leq n$; otherwise, write $s$ as a product of multicycle, $s=s_1\dots s_j$. Since
    $$\t\left(\tr\left(A\right)^k\right)<\t\left(s\right)=\sum_{\ell=1}^j\t\left(s_\ell\right)$$
    At least one $s_\ell$ should satisfy $\t\left(\tr\left(A\right)^{\left|s_\ell\right|}\right)<\t\left(s_\ell\right)$, and we may replace $k$ by $\ell$.

    Therefore, we assume $k\leq n$. Write $p_{A}\left(\lambda\right)=\lambda^{n}+{\displaystyle \sum_{i=1}^{n}}\alpha_{i}\lambda^{n-i}$. By \Lref{lem:coeff-of-char-poly},
    $$c_k=a_{i_1,i_{2}}a_{i_{2},i_{3}}\dots a_{i_{k},i_1}+\cdots=s+\cdots$$
    In particular, $\t\left(c_k\right)\ge\t\left(s\right)$. Also recall that $c_1=\minus\tr\left(A\right)$. Therefore,
    $$\t\left(c_1\right)=\t\left(\tr\left(A\right)\right)=\frac{\t\left(\tr\left(A\right)^k\right)}{k}< \frac{\t\left(s\right)}{k}\leq\frac{\t\left(c_k\right)}{k}$$
    An thus $\mu\left(A\right)\ge k\ge 2$, as required.

\item There is no multicycle with $a_{1,1}^{k}<_{\t}a_{1,i_{2}}a_{i_{2},i_{3}}\dots a_{i_{k},1}$. Then there has to be a multicycle for which $a_{1,1}^{k}\equiv_{\t}a_{1,i_{2}}a_{i_{2},i_{3}}\dots a_{i_{k},1}=s$ (since $s\left(\left(A^{k}\right)_{1,1}\right)=0$, and $a_{1,1}^k$ is a summand in the sum defining $\left(A^{k}\right)_{1,1}$).

    By writing $s$ as a product of simple cycles, we may assume that each simple cycle $a_{j_{1},j_{2}}a_{j_{2},j_{3}}\dots a_{j_{\ell},j_{1}}\equiv_{\t}a_{1,1}^{\ell}$ (Otherwise, we are finished by the first case).

    Since $\ell<n$, we get that $\left|L\left(A\right)\right|\ge2$, meaning $\minus\tr\left(A\right)\lambda^{n-1}$ is quasi-essential in $p_{A}\left(\lambda\right)$.
\end{casenv}
\end{proof}

\begin{cor}\label{cor:etr-of-nilpotent}
If $A$ is ELT nilpotent, then $s\left(\etr\left(A\right)\right)=0$.
\end{cor}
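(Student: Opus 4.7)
The corollary is essentially an immediate consequence of the two preceding lemmas, so the plan is to dichotomize on the layer of $\tr(A)$.

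First, I would dispose of the case $s\left(\tr\left(A\right)\right) = 0$: by \Lref{lem:tr-zero-implies-etr-zero}, we immediately obtain $s\left(\etr\left(A\right)\right) = 0$, and there is nothing further to do.

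The interesting case is $s\left(\tr\left(A\right)\right) \ne 0$. Here I would invoke \Lref{lem:ELT-nilpotent-etr}, which precisely addresses this situation and yields that $\minus\tr\left(A\right)\lambda^{n-1}$ is not essential in $p_{A}\left(\lambda\right)$. Now I would appeal directly to the definition of the essential trace: since the ``essential'' branch of the defining formula fails, we fall into the second branch, so that
$$\etr\left(A\right) = \layer{\left(\frac{\t\left(\alpha_\mu\right)}{\mu}\right)}{0},$$
which manifestly has $s\left(\etr\left(A\right)\right) = 0$.

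Combining the two cases gives $s\left(\etr\left(A\right)\right) = 0$ under the sole hypothesis that $A$ is ELT nilpotent. There is no real obstacle here — all of the combinatorial work (analyzing multicycles in a nilpotent matrix and showing that the trace monomial is dominated by a later coefficient) has already been carried out in \Lref{lem:ELT-nilpotent-etr}, and \Lref{lem:tr-zero-implies-etr-zero} handles the trivial branch. The proof is therefore a two-line case split.
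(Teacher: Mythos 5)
Your proposal is correct and matches the paper's own argument exactly: a two-case split on $s\left(\tr\left(A\right)\right)$, invoking \Lref{lem:tr-zero-implies-etr-zero} in the trivial case and \Lref{lem:ELT-nilpotent-etr} together with the definition of $\etr$ in the other. The paper's proof is terser but follows the same structure.
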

\begin{proof}
There are two cases:
\begin{enumerate}
\item If $s\left(\tr\left(A\right)\right)=0$, then $s\left(\etr\left(A\right)\right)=0$ by \Lref{lem:tr-zero-implies-etr-zero}.
\item Otherwise, $s\left(\tr\left(A\right)\right)\neq 0$; but then $s\left(\etr\left(A\right)\right)=0$ by \Lref{lem:ELT-nilpotent-etr}.
\end{enumerate}
\end{proof}

\bibliographystyle{plain}
\bibliography{references}

\end{document}